\newtcolorbox{stepbox}[2][]{%
  enhanced,
  attach boxed title to top center={yshift=-3mm,yshifttext=-1mm},
  colframe=blue!75!black,
  colbacktitle=red!80!black,
  fonttitle=\bfseries,
  title=#2,#1
}
\newcommand{\btau}{\underline{\bm{\tau}}}
\newcommand{\bedis}[2]{\mathbf{e}^n_{#1,#2}}
\newcommand{\ube}[2]{\underline{\mathbf{e}}^n_{#1,#2}}
\newcommand{\inner}[2]{\left(#1,#2\right)}
\newtheorem{theorem}{Theorem}[section]
\newtheorem{lemma}[theorem]{Lemma}
\newtheorem{remark}[theorem]{Remark}
\theoremstyle{definition}
\theoremstyle{remark}
\providecommand{\keywords}[1]{\textbf{\textit{Keywords---}} #1} 
\crefname{equation}{}{}
\numberwithin{figure}{section}
\numberwithin{table}{section}
\numberwithin{algorithm}{section}
\title{Iterative Contact-resolving Hybrid Methods for Multiscale Contact Mechanics}
\author[1]{Eric T. Chung}
\author[2]{Hyea Hyun Kim}
\author[1]{Xiang Zhong\thanks{Corresponding author.
(Email address: \href{mailto:xzhong@math.cuhk.edu.hk}{xzhong@math.cuhk.edu.hk})}}
\affil[1]{Department of Mathematics, The Chinese University of Hong Kong, Shatin, Hong~Kong~SAR, China.}
\affil[2]{Department of Applied Mathematics, and Institute of Natural Sciences, Kyung Hee University, Yongin, Republic of Korea.}
\date{}
\begin{document}
\maketitle

\begin{abstract}
  Modeling contact mechanics with high contrast coefficients presents significant mathematical and computational challenges, especially in achieving strongly symmetric stress approximations for mixed formulations. Due to the inherent nonlinearity of contact problems, conventional methods that treat the entire domain as a monolithic system often lead to high global complexity. To address this, we develop an iterative contact-resolving hybrid method by localizing nonlinear contact constraints within a smaller subdomain, while the larger subdomain is governed by a linear system. Our system employs variational inequality theory, minimization principles, and penalty methods. More importantly, we propose four discretization types within the two-subdomain framework, ranging from applying standard/mixed FEM across the entire domain to combining standard/mixed multiscale methods in the larger subdomain with standard/mixed FEM in the smaller one. 
By employing a multiscale reduction technique, the method avoids excessive degrees of freedom inherent
in conventional methods in the larger domain, while the mixed formulation enables direct stress computation, ensures local momentum conservation, and resists locking in nearly incompressible materials.
Convergence analysis and the corresponding algorithms are provided for all cases. Extensive numerical experiments are presented to validate the effectiveness of the approaches.
\end{abstract}

\keywords{contact mechanics, high contrast coefficients, multiscale method, mixed formulation} 

\section{Introduction}
Contact mechanics with high contrast coefficients arise in numerous engineering and geophysical applications where materials with significantly different mechanical properties interact. Typical examples include rubber seals pressing against metal surfaces, tire-road contact, and geological faults between dissimilar rock strata.
Such problems are characterized by a large disparity in material parameters (e.g., Young's modulus, Lam\'{e} coefficients) across the contacting interfaces, leading to complex, localized deformation patterns and challenging numerical simulation. 
The high contrast coefficients coupled with nonlinear contact conditions poses significant challenges, including solution ill-conditioning and boundary layers, which require robust discretization and solution techniques.
Kikuchi and Oden \cite{kikuchi1988contact} established a foundational framework for elasticity contact problems using variational inequalities. 
Computational methods essential for solving these nonlinear and potentially ill-conditioned systems are discussed in modern works by Wriggers \cite{wriggers2006computational, wriggers2006analysis, wriggers2008formulation}, Han \cite{han2019numerical, han2002quasistatic}, Haslinger \cite{haslinger2022numerical} and Laursen \cite{laursen2003computational}. An overview of recent developments for contact problems is provided by Chouly and Hild et al. \cite{chouly2018overview, chouly2023finite}. Further advancements in this area can be found in more recent studies such as \cite{anaya2025nitsche, burman2020nitsche, burman2023augmented, epalle2025parallel, gustafsson2025finite, gustafsson2020nitsche, wang2024mixed}. Contact constraints can be enforced via various methods, including Lagrange multipliers \cite{heintz2006stabilized, wang2024mixed}, penalty approaches \cite{gustafsson2025finite, hu2024mixed, kikuchi1981penalty, oden1982penalty,yastrebov2013numerical}, augmented Lagrangian techniques \cite{burman2019augmented, burman2023augmented}, and mortar formulations \cite{puso2004mortar}. 
Many numerical analyses have been developed for elasticity contact problems, such as optimized Schwarz methods \cite{xu2010spectral}, mixed finite element method (FEM) \cite{hu2024mixed,coorevits2002mixed,belgacem2005mixed,gallego1989mixed,alart1991mixed}, augmented Lagrangian method \cite{zhang2015boundary,simo1992augmented,burman2019augmented, burman2023augmented}, Nitsche finite element method \cite{burman2017penalty, chouly2015nitsche, chouly2015nitsche2, wriggers2008formulation}, least squares method \cite{attia2009first}, discontinuous Petrov–Galerkin methods (DPG) \cite{fuhrer2018dpg} and so on.   


 The resulting system of equations in contact mechanics is typically a large-scale, nonlinear, and ill-conditioned problem, especially in cases of high material contrast. To tackle these challenges, we propose an efficient iterative contact-resolving hybrid framework for the penalized contact problem. This approach employs a Robin boundary condition as the transmission condition, combined with a derivative-free technique for updating transmission data on the interfaces (see \cite{deng1997timely, deng2003nonoverlapping}). In our setting, the contact boundary is contained entirely within the smaller subdomain and combined with penalty approach, while the larger subdomain allows for flexible discretization choices. For instance, the constraint energy minimizing generalized multiscale finite element method (CEM-GMsFEM) \cite{chung2023multiscale, chung2025locking} can be employed to significantly reduce computational costs associated with high-contrast features in the larger domain, while still preserving coarse-mesh convergence. More specifically, we introduce four types of discretizations within the iterative contact-resolving hybrid framework. These include combinations where both subdomains use standard/mixed finite element methods (FEM), or where the larger subdomain uses standard/mixed CEM-GMsFEM and the smaller one uses standard/mixed FEM. Corresponding algorithms are provided for each case, forming efficient strategies for designing scalable solvers and preconditioners.

Our main contributions are threefold. Firstly, we develop a novel iterative contact-resolving hybrid method for multiscale contact mechanics that isolates nonlinear contact constraints within a smaller subdomain, while the larger subdomain retains a linear system (see Algorithms \ref{alg:B}-\ref{alg:BB}). Conventional methods typically treat the nonlinear contact problem across the entire domain, leading to a globally complex system. In our approach, the nonlinear subproblem, which is restricted to a very small region with a width of one coarse mesh element, is efficiently solved by means of a semismooth Newton method \cite{hintermuller2002primal}. This localized treatment significantly shortens iteration time. Meanwhile, the larger subdomain supports flexible discretization choices without the need to account for nonlinearity.
Secondly, we introduce mixed formulations (see Algorithms \ref{an iterative dd algorithm for continuous setting in mixed form} and \ref{alg:BB}) for solving linear elasticity contact problems involving highly heterogeneous and high-contrast coefficients. By adopting a stress-displacement mixed formulation, our method enables direct computation of stress fields, avoiding post-processing and enforcing local momentum conservation for enhanced physical consistency. Moreover, this formulation inherently prevents Poisson’s ratio locking, enabling robust simulations of nearly incompressible materials.
Lastly, we incorporate the CEM-GMsFEM framework within the larger subdomain (see Algorithms \ref{alg:A}-\ref{alg:BB}). It is well-known that obtaining strongly symmetric stress approximations in mixed methods do not come easy. While established mixed methods \cite{adam2002,arnold2002,arnold2008,DFLS2008,zhong2023spectral} require excessive degrees of freedom to enforce such symmetry, our multiscale model reduction technique substantially lowers computational cost while preserving accuracy.

We emphasize that the primary novelty of our work lies in introducing a hybrid methodology for addressing multiscale contact mechanics—an approach that, to the best of our knowledge, has not been fully explored in the existing literature, such as \cite{anaya2025nitsche, attia2009first, belgacem2005mixed, burman2017penalty, burman2020nitsche, burman2019augmented, burman2023augmented, chouly2018overview, chouly2015nitsche, chouly2015nitsche2, epalle2025parallel, gustafsson2020nitsche, wang2024mixed}. Within this hybrid framework, nonlinear contact behavior is effectively handled. Our methodology differs notably from that described in \cite{deng1997timely, deng2003optimal, deng2003nonoverlapping}. It should be noted that \cite{deng2003nonoverlapping} considers a nonoverlapping domain decomposition method applied to a simple second-order linear elliptic problem, in which the transmission condition can be updated easily due to its linear nature. In contrast, our work focuses on efficiently resolving nonlinear contact constraints, where updating the transmission condition along the interfaces poses a nontrivial challenge. This difficulty arises from the need to compute solutions associated with the contact area through a nonlinear iterative sub-procedure. Consequently, the core data-updating step in our algorithm is closely tied to the inherent nonlinearity of the contact constraints, rendering it considerably more complex.

This paper is structured as follows. In Section \ref{sec: Preliminaries}, we introduce the model problem and some notation. The contact problem in elasticity is reformulated using a minimization theorem and penalty methods. Section \ref{various discretizations} presents an iterative contact-resolving hybrid framework along with various discretization schemes. Specifically,  the methods based on standard FEM and mixed FEM formulations in the whole domain are detailed in Sections \ref{Nonoverlapping domain decomposition method associated with Standard FEM formulation} and \ref{Nonoverlapping domain decomposition method associated with Mixed FEM}, respectively, while those incorporating standard and mixed CEM-GMsFEM in the larger subdomain are discussed in Sections \ref{Nonoverlapping domain decomposition method associated with standard CEM-GMsFEM} and \ref{Nonoverlapping domain decomposition method associated with mixed CEM-GMsFEM}.  Corresponding algorithms are provided for all four cases (i.e. Algorithms \ref{alg:B}-\ref{alg:BB}). In Section \ref{Analysis}, we give some analyses for the convergence of the iterative contact-resolving hybrid methods proposed in Section  \ref{various discretizations}. 
Section \ref{Numerical experiments} reports numerical experiments on two test models to demonstrate the performance of the proposed method. Finally, a summary of the conclusions is presented in Section \ref{conclusions}.

\section{Model problem}\label{sec: Preliminaries}
We consider the elasticity contact problem in the domain $\Omega\subset\mathbb{R}^n$:
\begin{subequations}
\label{model_problem}
    \begin{align}
        \mathcal{A} \underline{\bm{\sigma}} &= \underline{\bm{\epsilon}}(\mathbf{u}) \quad &&\text{in } \Omega, \label{model_problem_d}\\
        -\nabla\cdot(\underline{\bm{\sigma}}(\mathbf{u})) &= \mathbf{f} \quad &&\text{in } \Omega, \label{model_problem_a} \\
        \mathbf{u} &= \mathbf{0} \quad &&\text{on } \Gamma_D, \label{model_problem_b} \\
        \mathbf{u}\cdot\mathbf{n}_c\leq0,\quad \mathbf{n}_c\cdot(\underline{\bm{\sigma}}\cdot\mathbf{n}_c)\leq0,\quad \big(\mathbf{n}_c\cdot(\underline{\bm{\sigma}}\cdot\mathbf{n}_c)\big) \mathbf{u}\cdot\mathbf{n}_c&=0  \quad &&\text{on } \Gamma_C, \label{model_problem_c}
    \end{align}	
\end{subequations}
where
$\underline{\bm{\sigma}}:\Omega\to\mathbb{R}^{n\times n}$ is the symmetric stress tensor, $\mathbf{u}$ denotes the displacement and $\underline{\bm{\epsilon}}(\mathbf{u})=\frac{1}{2}(\nabla\mathbf{u}+(\nabla\mathbf{u})^{\mathrm{T}})$ the linearized strain tensor. $\mathbf{n}_c$ is the unit outward normal vector to $\Gamma_C$. $\Omega\subset \mathbb{R}^n$ 
($n=2,3$) is a bounded and connected Lipschitz polyhedral domain occupied by an isotropic and linearly elastic solid. {$\mathcal{A}$ is the inverse of the elasticity operator, which is given by
$\mathcal{A}\underline{\bm{\tau}} := \frac{1}{2\mu}\,\underline{\bm{\tau}} - \frac{\lambda}{2\mu(3\lambda+2\mu)} (\operatorname{tr}\underline{\bm{\tau}}) \boldsymbol{I}$ in three dimension.
Here $\lambda$ and $\mu$ are the Lam$\rm \acute{e}$ coefficients. The same expression applies to the 2D plane parts of stress and strain, in the case of plane stress. Note that in the plane strain, the in-plane constitutive relation differs (e.g., with the denominator $  2\lambda + 2\mu$). $\bm{\mathit{I}}$ is the identity matrix of $\mathbb{R}^{n\times n}$.
For nearly incompressible materials, $\lambda$ is large in comparison with $\mu$. In this paper, $\lambda$ and $\mu$ are highly heterogeneous in space and possibly high contrast.

The domain $\Omega$ can be decomposed into two subdomains based on the contact boundary $\Gamma_C$: a smaller subdomain that contains $\Gamma_C$, and a larger subdomain that is free of it. $\partial\Omega$ is the boundary of the domain $\Omega$, which admits
a disjoint partition $\partial\Omega=\Gamma_D\cup\Gamma_C$. Note that $\Gamma_D$ has a positive measure. For simplicity, we consider a two dimensional unilateral contact model as depicted in Figure \ref{2D model}. In this case, $\bar{\Omega}=\bar{\Omega}_1 \cup \bar{\Omega}_2=\Omega_1 \cup \Omega_2 \cup \gamma \cup \partial \Omega$,
$
\gamma=\partial \Omega_1 \cap \partial \Omega_2$. $\Omega_2$ is the smaller subdomain that contains $\Gamma_C$ while the larger domain $\Omega_1$ devoid of it.
 Denote $\mathbf{n}_i$ as the unit outward normal vector to the boundary of $\Omega_i$ for $i=1,2$. To localize the nonlinear contact constraints within an extremely small region, we define $\Omega_2$ to be the width of a single coarse mesh.
We further assume frictionless contact, i.e.,
$
\mathbf{t}_c\cdot(\underline{\bm{\sigma}}\cdot\mathbf{n}_c)=0$ on $\Gamma_C$,
where $\mathbf{t}_c$ is the unit tangential vector to $\Gamma_C$.
To simplify the notation, we define the normal displacement $u_c = \mathbf{u} \cdot \mathbf{n}_c$ and the normal stress $\sigma_c = \mathbf{n}_c \cdot (\underline{\bm{\sigma}} \cdot \mathbf{n}_c)$. Then the contact boundary condition (\ref{model_problem_c}) can be reformulated as
\[
u_c\leq0,\quad \sigma_c\leq 0,\quad\sigma_cu_c=0\qquad  \text{on } \Gamma_C.
\]

\begin{figure}[tbph] 
		\centering 
		\includegraphics[height=4cm,width=9cm]{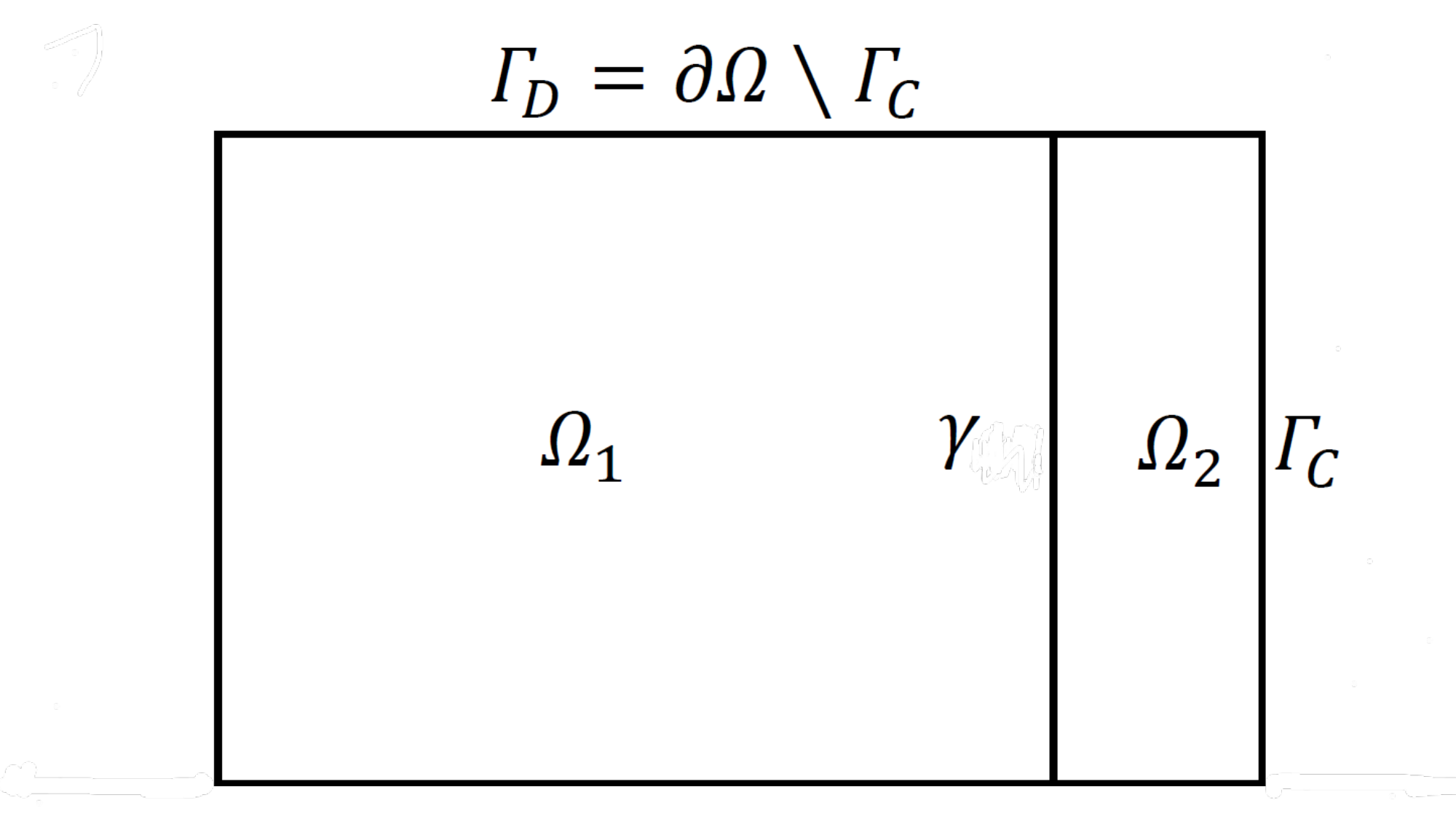} 
		\caption{Simplified 2D model}
		\label{2D model}
\end{figure}
We define the following subsets of the Sobolev space $\bm{\mathit{H}}^1(\Omega)$ of vector valued functions: 
\[
\bm{\mathit{H}}^1_{\Gamma_D}(\Omega):=\{\mathbf{v}\in\bm{\mathit{H}}^1(\Omega)\mid \mathbf{v}=\mathbf{0}\text{ on }\Gamma_D\},
\quad
\mathbf{V}:=\{\mathbf{v}\in\bm{\mathit{H}}^1_{\Gamma_D}(\Omega)\mid v_c \leq 0 \text{ a.e. on }\Gamma_C\}.
\]
Following \cite{PG2013}, the exact solution to problem (\ref{model_problem}) is given by the variational inequality: find $\mathbf{u} \in \mathbf{V}$ such that
\begin{align}
\label{variational inequality}
a(\mathbf{u},\mathbf{v}-\mathbf{u})\geq (\mathbf{f},\mathbf{v}-\mathbf{u})
\end{align}
for all $\mathbf{v}\in\mathbf{V}$, where $a(\mathbf{w},\mathbf{v})=\int_\Omega\mathcal{A}^{-1}\underline{\bm{\epsilon}}(\mathbf{u})\colon\underline{\bm{\epsilon}}. (\mathbf{v})dx$ and $(\mathbf{f},\mathbf{v})=\int_\Omega\mathbf{f}\cdot\mathbf{v}dx$. And we define $(\mathbf{w},\mathbf{v})_D=\int_D\mathbf{w}\cdot\mathbf{v}dx$ for all $D\subset\Omega$, where $D$ will be dropped if $D=\Omega$. According to \cite{PG2013}, the weak problem \eqref{variational inequality} is well-posed and admits a unique solution. 
Furthermore, the solution can be equivalently characterized as the minimizer of a constrained minimization problem. Specifically, given the functional $F: \bm{\mathit{H}}^1_{\Gamma_D}(\Omega)\to\mathbb{R}$ as $F(\mathbf{v})=\frac{1}{2}a(\mathbf{v},\mathbf{v})-(\mathbf{f},\mathbf{v})$, find $\mathbf{u}\in\mathbf{V}$ such that $$F(\mathbf{u})=\inf_{\mathbf{v}\in\mathbf{V}}F(\mathbf{v}).$$
Next, we employ the penalty method to transform the constrained minimization problem into an unconstrained one. This is achieved by introducing a penalty term into the objective function that effectively suppresses constraint violations. We denote this penalty term as
$P: \bm{\mathit{H}}^1_{\Gamma_D}(\Omega)\rightarrow \mathbb{R}$
and a new functional $F_{\delta}: \bm{\mathit{H}}^1_{\Gamma_D}(\Omega) \rightarrow \mathbb{R}$ depending on a real parameter $\delta > 0$ as follows
$$
F_{\delta}(\mathbf{v}) = F(\mathbf{v}) + \frac{1}{\delta} P(\mathbf{v}),
$$
where $
P(\mathbf{v})\coloneqq \frac{1}{2}\int_{\Gamma_{\mathrm{C}}} \abs{v_c^{+}}^2 \, \mathrm{d} s
$, $v_c^+=\max\{0,v_c\}$ and $v_c^-=-\min\{0,v_c\}$. Note that $v_c=v_c^+-v_c^-$, $-v_c^-\cdot v_c=\abs{v_c^-}^2$, $v_c^+\cdot v_c=\abs{v_c^+}^2$, $v_c^+v_c^-=0$. Then for all $\mathbf{v},\mathbf{w}\in\bm{\mathit{H}}^1_{\Gamma_D}(\Omega)$, we have
\begin{equation}
\label{an_important_inequality}
\begin{aligned}
(v_c^+ - w_c^+)(v_c - w_c) 
&= (v_c^+ - w_c^+)^2 + (v_c^+ - w_c^+)(w_c^- - v_c^-) \\
&= (v_c^+ - w_c^+)^2 + v_c^- w_c^+ +w_c^- v_c^+ \geq (v_c^+ - w_c^+)^2 \geq 0.
\end{aligned}
\end{equation}
The penalty term $P$ is designed such that a candidate minimizer $\mathbf{v} \in \bm{\mathit{H}}^1_{\Gamma_D}(\Omega)$ incurs a larger penalty with increasing violation of the constraint $v_c \leq 0$ on $\Gamma_C$. Consequently, for every $\delta > 0$, the functional $F_{\delta}$ admits a minimizer $u_{\delta} \in V$. This leads to the constrained minimization problem: find $\mathbf{u} \in \bm{\mathit{H}}^1_{\Gamma_D}(\Omega)$ such that
$F_{\delta}(\mathbf{u}) = \inf_{\mathbf{v}\in\bm{\mathit{H}}^1_{\Gamma_D}(\Omega)} F_{\delta}(\mathbf{v}).$

\section{Iterative contact-resolving hybrid methods associated with various discretizations} \label{various discretizations}
In this section, we present various discretization schemes for the iterative contact-resolving hybrid framework. These include applying standard/mixed FEM across the entire domain (in Sections \ref{Nonoverlapping domain decomposition method associated with Standard FEM formulation} and \ref{Nonoverlapping domain decomposition method associated with Mixed FEM}), as well as a combined approach that couples standard/mixed CEM-GMsFEM in the larger domain with standard/mixed FEM in the smaller domain (in Sections \ref{Nonoverlapping domain decomposition method associated with standard CEM-GMsFEM} and \ref{Nonoverlapping domain decomposition method associated with mixed CEM-GMsFEM}).
\subsection{Iterative contact-resolving hybrid method associated with Standard FEM}\label{Nonoverlapping domain decomposition method associated with Standard FEM formulation}
This section firstly introduces the standard FEM formulation for the penalized contact problem, and then presents the iterative contact-resolving hybrid algorithm (Algorithm \ref{alg:B}). Convergence result for the algorithm is given in Theorem \ref{convergence in discrete setting}.

Provided that the functionals $F$ and $P$ (introduced in Section \ref{sec: Preliminaries}) are Gateaux-differentiable, by \cite[Chapter 6]{kikuchi1988contact}, the unconstrained minimization problem admits a characterization: find $\mathbf{u} \in \bm{\mathit{H}}^1_{\Gamma_D}(\Omega)$ such that
\begin{equation}
\label{weak form}
a(\mathbf{u},\mathbf{v})+\frac{1}{\delta}\int_{\Gamma_C}u_c^+\cdot v_cds=(\mathbf{f},\mathbf{v})\quad\forall \mathbf{v}\in \bm{\mathit{H}}^1_{\Gamma_D}(\Omega).
\end{equation}

Let $\mathbf{V}_h \subset \bm{\mathit{H}}^1(\Omega)$ be the the standard vector-valued piecewise linear finite element space ($\bm{\mathit{P}}_1$).
Define $\mathbf{V}_{h, \Gamma_D}$ to be the space consisting of the functions in $\mathbf{V}_h$ that takes the value $\mathbf{0}$ on $\Gamma_D$. Then the standard finite element method for (\ref{weak form}) is to find $\mathbf{u}_h \in \mathbf{V}_{h, \Gamma_D}$ such that (denote $v_{hc}=\mathbf{v}_h\cdot\mathbf{n}_c$ for all $\mathbf{v}_h \in \mathbf{V}_{h, \Gamma_D}$)
\begin{align}
\label{discrete form}
a\left(\mathbf{u}_h, \mathbf{v}_h\right)+ \frac{1}{\delta} \int_{\Gamma_C} \left(u_{hc}\right)^{+} \cdot v_{hc} \, ds = \left(\mathbf{f}, \mathbf{v}_h\right) \quad \forall \mathbf{v}_h \in \mathbf{V}_{h, \Gamma_D}.
\end{align}
More precisely, $\mathbf{V}_h = \operatorname{span}\left\{\bm{\varphi}_j^1,\bm{\varphi}_j^2\right\}_{j=1}^{N_o}$, where $N_o$ denotes the finite element nodal point set. In two dimensions, the first component of $\bm{\varphi}_j^1$ (for each $1\leq j\leq N_o$) is a nodal basis and the second component is 0; $\bm{\varphi}_j^2$ has the reverse structure.

Next we develop an iterative contact-resolving hybrid method for solving (\ref{discrete form}). 
Let $\mathbf{V}_{h,i} = \mathbf{V}_{h,\Gamma_D} |_{\Omega_i}$ for $i=1,2$. Let $N_\gamma = \{ p \in N_o \mid p \in \gamma \}$ denote the set of finite element nodal points lying on the interface $\gamma$, and
$
\mathbf{V}_h\left(\gamma\right) = \left\{ \mathbf{v} \in \mathbf{V}_h \mid \mathbf{v} = \sum_{p \in N_\gamma} (y^1_p \bm{\varphi}^1_p+y^2_p \bm{\varphi}^2_p), \, y^1_p, y^2_p \in \mathbb{R} \right\},
$
where $\left\{\bm{\varphi}^1_p,\bm{\varphi}^2_p\right\}_{p \in N_o}$ is the nodal basis at the point $p$ of the finite element space $\mathbf{V}_h$. We denote

\begin{equation}
\label{Newton-Cotes-integral}
\int_{\gamma}^* \bm{u}\cdot\bm{v} \, ds = \sum_{p \in N_\gamma} u_1(p) v_1(p) w^1_p+u_2(p)v_2(p) w^2_p,
\end{equation}
where $\mathbf{u}=(u_1,u_2)^{\mathrm{T}}$, $\mathbf{v}=(v_1,v_2)^{\mathrm{T}}$. $w^1_p,w^2_p$ are the weights at $p$ obtained from Composite Newton-Cotes Quadrature. The nodes of the Lagrange basis functions on $\gamma$ coincide with the quadrature nodes of the Composite Newton–Cotes rule (see e.g., \cite{hamming2012numerical,hildebrand1987introduction,trefethen2022exactness}). Therefore, the integrals on the interfaces are evaluated numerically via (\ref{Newton-Cotes-integral}). In the theoretical analysis we use the composite Newton–Cotes quadrature for its simple, node‑aligned structure; in actual computations, Gaussian quadrature can be employed for higher accuracy and efficiency.

To restrict the nonlinear contact constraints within a smaller subdomain while keeping the larger subdomain as a linear system, a Robin boundary condition like the form ``$\big(\mathcal{A}^{-1}\underline{\bm{\epsilon}}(\mathbf{u})\big)\cdot \mathbf{n} + \alpha \mathbf{u} = \mathbf{g}$" is employed as its transmission condition on the interface $\gamma$ ($\alpha>0$ is a transmission coeﬃcient, which will be used below). Its basic idea is to combine Dirichlet and Neumann transmission
condition as a Robin-type transmission condition, thereby transforming overdetermined subproblems into well-posed Robin boundary subproblems. This idea, originally proposed by Lions \cite{lions1990schwarz} and Tang \cite{tang1992generalized}, has been further developed in a number of later works \cite{deng1997timely, deng2003optimal, deng2003nonoverlapping, xu2010spectral, qin2006parallel}. These studies show that the convergence speed critically depends on the transmission coefficient $\alpha$, which has been assigned values like $1$, $h^{-1/2}$, or $h^{1/2}$ to examine convergence behavior.

In order to establish the convergence of Algorithm \ref{alg:B}, We first reformulate (\ref{discrete form}) into an equivalent split subproblem form as follows. We denote $\norm{\mathbf{v}}_{a_i}^2=a_i(\mathbf{v},\mathbf{v})\coloneqq\int_{\Omega_i}\mathcal{A}^{-1}\underline{\bm{\epsilon}}(\mathbf{v})\colon\underline{\bm{\epsilon}}(\mathbf{v})dx$ for $i=1,2$.
\begin{theorem}
\label{equivalent discrete form}
Let $\mathbf{u}_h \in \mathbf{V}_{h,\Gamma_D}$ be the solution of problem (\ref{discrete form}) and $\mathbf{u}_{h,i} = \left.\mathbf{u}_h\right|_{\Omega_i}$ for $i=1,2$. Then the problem (\ref{discrete form}) can be split into an equivalent splitting subproblem form. That is, there exist $\mathbf{g}_{12}^*,\mathbf{g}_{21}^* \in \mathbf{V}_h\left(\gamma\right)$ such that $\mathbf{u}_{h,i} \in \mathbf{V}_{h,i} \, (i=1,2)$ satisfies
$$
a_1\left(\mathbf{u}_{h,1}, \mathbf{v}_{h,1}\right) + \alpha \int_{\gamma}^* \mathbf{u}_{h,1} \cdot \mathbf{v}_{h,1} \, ds = \left(\mathbf{f}, \mathbf{v}_{h,1}\right)_{\Omega_1} +  \int_{\gamma}^* \mathbf{g}_{12}^* \cdot \mathbf{v}_{h,1} \, ds\quad \forall \mathbf{v}_{h,1} \in \mathbf{V}_{h,1},
$$
and
$$
\begin{aligned}
& \quad a_2\left(\mathbf{u}_{h,2}, \mathbf{v}_{h,2}\right) + \frac{1}{\delta} \int_{\Gamma_C} \left(u_{hc,2}\right)^{+} \cdot v_{hc,2} \, ds + \alpha \int_{\gamma}^* \mathbf{u}_{h,2} \cdot \mathbf{v}_{h,2} \, ds = \left(\mathbf{f}, \mathbf{v}_{h,2}\right)_{\Omega_2} +  \int_{\gamma}^* \mathbf{g}_{21}^* \cdot \mathbf{v}_{h,2} \, ds
\end{aligned}
$$
for all $\mathbf{v}_{h,2} \in \mathbf{V}_{h,2}$, where $u_{hc,2}=\mathbf{u}_{h,2}\cdot\mathbf{n}_c,v_{hc,2}=\mathbf{v}_{h,2}\cdot\mathbf{n}_c$.
\end{theorem}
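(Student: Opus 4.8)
The plan is to establish a two-way equivalence between the monolithic discrete problem (\ref{discrete form}) and the coupled pair of subdomain problems, with the interface data $\mathbf{g}_{ij}^*$ recovered as discrete Robin fluxes of the monolithic solution. The whole argument rests on splitting the test space $\mathbf{V}_{h,\Gamma_D}$ according to whether a nodal basis function is supported away from the interface or touches it, together with the fact that the lumped interface pairing $\int_{\gamma_{ij}}^*\,ds$ is diagonal with strictly positive weights, so that any interface functional determines a unique element of $\mathbf{V}_h(\gamma_{12})$ nodewise.

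For the forward direction I would first test (\ref{discrete form}) with the nodal basis functions $\bm{\varphi}_p^k$ ($k=1,2$) associated with nodes $p$ interior to $\Omega_1$ (respectively $\Omega_2$). Such functions vanish on both $\gamma_{12}$ and $\Gamma_C$, so $a_h(\cdot,\cdot)=a_1^h(\cdot,\cdot)$ (respectively $a_2^h$), the penalty term drops out, and the right-hand side localizes to $(\mathbf{f},\cdot)_{\Omega_i}$; this shows that $\mathbf{u}_{h,1}$ and $\mathbf{u}_{h,2}$ already satisfy the two target identities for every test function vanishing on the interface, with the interface integrals inert. It then remains to define $\mathbf{g}_{12}^*,\mathbf{g}_{21}^*\in\mathbf{V}_h(\gamma_{12})$. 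For each interface node $p\in\gamma_{12}\cap N_o$ I set $\mathbf{g}_{12}^*(p)$ and $\mathbf{g}_{21}^*(p)$ by demanding that the two subdomain identities hold when tested with $\bm{\varphi}_p^k$; because the lumped quadrature makes $\int_{\gamma_{12}}^*\mathbf{g}\cdot\bm{\varphi}_p^k\,ds$ depend only on the nodal value $\mathbf{g}(p)$ times a positive weight, these relations pin down $\mathbf{g}_{12}^*(p),\mathbf{g}_{21}^*(p)$ uniquely. Here I would use that interface basis functions do not meet $\Gamma_C$ (since $\Gamma_C\subset\partial\Omega_2$ is separated from the internal interface $\gamma_{12}$), so the penalty term contributes nothing to the definition of $\mathbf{g}_{21}^*$.

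Next I would verify the consistency relation that makes $\mathbf{g}_{ij}^*$ a genuine fixed point of Algorithm \ref{alg:B}. Adding the two defining identities at an interface node and subtracting (\ref{discrete form}) tested with $\bm{\varphi}_p^k$ cancels the bilinear-form and load contributions, leaving $\int_{\gamma_{12}}^*(\mathbf{g}_{12}^*+\mathbf{g}_{21}^*)\cdot\bm{\varphi}_p^k\,ds=\alpha_{12}\int_{\gamma_{12}}^*(\mathbf{u}_{h,1}+\mathbf{u}_{h,2})\cdot\bm{\varphi}_p^k\,ds$, where I have used $\alpha_{12}=\alpha_{21}$. Since $\mathbf{u}_h\in\mathbf{V}_h\subset\bm{\mathit{H}}^1(\Omega)$ is continuous across $\gamma_{12}$, we have $\mathbf{u}_{h,1}=\mathbf{u}_{h,2}$ there, and the diagonal positivity of the pairing yields $\mathbf{g}_{12}^*+\mathbf{g}_{21}^*=2\alpha_{12}\mathbf{u}_h$ on $\gamma_{12}$, which is exactly the relation satisfied at a fixed point of the update step. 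For the reverse direction I would sum the two subdomain identities, tested with the restrictions $\mathbf{v}_h|_{\Omega_i}$ of an arbitrary $\mathbf{v}_h\in\mathbf{V}_{h,\Gamma_D}$; continuity collapses $\alpha_{12}\int_{\gamma_{12}}^*\mathbf{u}_{h,1}\cdot\mathbf{v}_h\,ds+\alpha_{21}\int_{\gamma_{21}}^*\mathbf{u}_{h,2}\cdot\mathbf{v}_h\,ds$ to $2\alpha_{12}\int_{\gamma_{12}}^*\mathbf{u}_h\cdot\mathbf{v}_h\,ds$, which is cancelled by $\int_{\gamma_{12}}^*(\mathbf{g}_{12}^*+\mathbf{g}_{21}^*)\cdot\mathbf{v}_h\,ds$ through the consistency relation, and $a_1^h+a_2^h=a_h$ together with $u_{hc,2}=u_{hc}$ on $\Gamma_C$ recovers (\ref{discrete form}).

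The main obstacle—and the place where the specific quadrature choice matters—is the well-posedness and locality of the flux recovery: I must ensure the interface pairing is nondegenerate so that $\mathbf{g}_{ij}^*$ is uniquely determined nodewise, which is precisely why the composite Newton--Cotes (nodal, hence mass-lumped) rule, with integration nodes coinciding with the Lagrange nodes, is used. The secondary subtlety is geometric, namely confirming that the support of every interface basis function is disjoint from $\Gamma_C$, so the nonlinear penalty term never enters the definition of the linear interface data on $\gamma_{12}$; this is exactly what keeps the nonlinearity confined to the interior of $\Omega_2$ as claimed.
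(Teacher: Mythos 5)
Your proposal is correct and follows essentially the same route as the paper's proof of Theorem \ref{equivalent discrete form}: test (\ref{discrete form}) with the nodal basis functions, note that tests supported away from $\gamma_{12}$ localize trivially (with the penalty term either absent or common to both sides), and use the diagonal, positive-weight structure of the composite Newton--Cotes pairing $\int_{\gamma_{12}}^*\cdot\,ds$ to recover $\mathbf{g}_{12}^*$ and $\mathbf{g}_{21}^*$ nodewise from the subdomain residuals, which is exactly the paper's explicit construction via $G_{12}^{p,l}$ and $G_{21}^{p,l}$. The two extras you add---the fixed-point consistency relation $\mathbf{g}_{12}^*+\mathbf{g}_{21}^*=2\alpha_{12}\mathbf{u}_h$ on $\gamma_{12}$ and the converse summation argument---do not appear in the paper's proof of this theorem, but the former is precisely what the paper uses implicitly when it replaces $\mathbf{g}_{ij}^n$ by $\mathbf{g}_{ij}^n-\mathbf{g}_{ij}^*$ in the convergence analysis of Theorem \ref{convergence in discrete setting}, so these are harmless and in fact useful additions.
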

Then we give the following convergence result for Algorithm \ref{alg:B}.
\begin{theorem}
\label{convergence in discrete setting}
Let $\mathbf{u}_h \in \mathbf{V}_{h,\Gamma_D}$ be the solution of problem (\ref{discrete form}) and $\mathbf{u}_{h,i} = \left.\mathbf{u}_h\right|_{\Omega_i}$ for $i=1,2$. Let $\mathbf{u}_i^n \in \mathbf{V}_{h,i} \, (i=1,2)$ be the solutions of subproblems (\ref{discrete_algo_1}) and (\ref{discrete_algo_22}) at iterative step $n$. Then we have
$$
\left(\sum_{i=1}^2 \left\|\mathbf{u}_i^n - \mathbf{u}_{h,i}\right\|_{a_i}^2 + \frac{1}{\delta}\int_{\Gamma_C}  \left[\left(u_{2,c}^n\right)^{+} - \left(u_{hc,2}\right)^{+}\right]^2 \, ds \right)^{\frac{1}{2}} \rightarrow 0 \text{ as } n \rightarrow \infty.
$$
\end{theorem}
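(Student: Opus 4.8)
The plan is to prove convergence by a Lions/Deng-type interface-energy argument: I track a nonnegative energy built from the transmission data, show it decreases at each iteration by an amount bounded below by $\|\mathbf{e}^n_h\|_e^2$, and then conclude by telescoping. Write $\alpha := \alpha_{12} = \alpha_{21}$ and $\mathbf{h}_{ij}^n := \mathbf{g}_{ij}^n - \mathbf{g}_{ij}^*$, and set $e_{hc,2}^n := u_{2,c}^n - u_{hc,2} = \mathbf{e}_{h,2}^n \cdot \bm{n}_c$. Subtracting the subproblems of \cref{equivalent discrete form} from \eqref{discrete_algo_1}--\eqref{discrete_algo_22}, the errors satisfy, for all test functions,
\[
a_1^h(\mathbf{e}_{h,1}^n,\mathbf{v}_{h,1}) + \alpha\int_{\gamma_{12}}^* \mathbf{e}_{h,1}^n\cdot\mathbf{v}_{h,1}\,ds = \int_{\gamma_{12}}^* \mathbf{h}_{12}^n\cdot\mathbf{v}_{h,1}\,ds,
\]
together with the analogous $\Omega_2$ identity that retains the nonlinear difference $\frac1\delta\int_{\Gamma_C}[(u_{2,c}^n)^+-(u_{hc,2})^+]\,v_{hc,2}\,ds$. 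Subtracting the Step~3 update from the fixed-point relations for $\mathbf{g}^*$ (which can be arranged in the construction behind \cref{equivalent discrete form} and are consistent precisely because the conforming $P_1$ solution $\mathbf{u}_h$ is continuous across $\gamma_{12}$, i.e. $\mathbf{u}_{h,1}=\mathbf{u}_{h,2}$ there) gives the homogeneous nodal updates $\mathbf{h}_{12}^{n+1}(p)=2\alpha\,\mathbf{e}_{h,2}^n(p)-\mathbf{h}_{21}^n(p)$ and $\mathbf{h}_{21}^{n+1}(p)=2\alpha\,\mathbf{e}_{h,1}^n(p)-\mathbf{h}_{12}^n(p)$ at every node $p\in\gamma_{12}\cap N_o$.

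Writing $\|\mathbf{w}\|_*^2 := \int_{\gamma_{12}}^*\mathbf{w}\cdot\mathbf{w}\,ds$ (with $\gamma_{12}=\gamma_{21}$) and $E^n := \|\mathbf{h}_{12}^n\|_*^2+\|\mathbf{h}_{21}^n\|_*^2$, the heart of the proof is an energy identity. Because $\int_{\gamma_{12}}^*$ samples only at the nodes $p$, the pointwise update is compatible with the discrete form, so squaring yields $\|\mathbf{h}_{12}^{n+1}\|_*^2 = 4\alpha^2\|\mathbf{e}_{h,2}^n\|_*^2 - 4\alpha\int_{\gamma_{12}}^*\mathbf{e}_{h,2}^n\cdot\mathbf{h}_{21}^n\,ds + \|\mathbf{h}_{21}^n\|_*^2$. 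I then test the $\Omega_2$ error equation with $\mathbf{v}_{h,2}=\mathbf{e}_{h,2}^n$ to replace the cross term, namely $\int_{\gamma_{21}}^*\mathbf{h}_{21}^n\cdot\mathbf{e}_{h,2}^n\,ds = \|\mathbf{e}_{h,2}^n\|_{a_2^h}^2 + \frac1\delta\int_{\Gamma_C}[(u_{2,c}^n)^+-(u_{hc,2})^+]\,e_{hc,2}^n\,ds + \alpha\|\mathbf{e}_{h,2}^n\|_*^2$. Substituting, the choice $\alpha_{12}=\alpha_{21}$ cancels the $4\alpha^2\|\mathbf{e}_{h,2}^n\|_*^2$ terms exactly, leaving $\|\mathbf{h}_{12}^{n+1}\|_*^2 = \|\mathbf{h}_{21}^n\|_*^2 - 4\alpha\big(\|\mathbf{e}_{h,2}^n\|_{a_2^h}^2 + \frac1\delta\int_{\Gamma_C}[(u_{2,c}^n)^+-(u_{hc,2})^+]\,e_{hc,2}^n\,ds\big)$. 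The identical computation on $\Omega_1$ (no penalty term) gives $\|\mathbf{h}_{21}^{n+1}\|_*^2 = \|\mathbf{h}_{12}^n\|_*^2 - 4\alpha\|\mathbf{e}_{h,1}^n\|_{a_1^h}^2$. Adding the two produces
\[
E^{n+1} = E^n - 4\alpha\Big(\|\mathbf{e}_{h,1}^n\|_{a_1^h}^2 + \|\mathbf{e}_{h,2}^n\|_{a_2^h}^2 + \tfrac1\delta\int_{\Gamma_C}[(u_{2,c}^n)^+-(u_{hc,2})^+]\,e_{hc,2}^n\,ds\Big).
\]

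To close the argument I invoke \eqref{an_important_inequality} with $v_c=u_{2,c}^n$, $w_c=u_{hc,2}$, which bounds the penalty cross term below by $\int_{\Gamma_C}[(u_{2,c}^n)^+-(u_{hc,2})^+]^2\,ds\ge 0$. Hence the parenthesized quantity dominates $\|\mathbf{e}^n_h\|_e^2\ge0$, so $E^{n+1}\le E^n - 4\alpha\|\mathbf{e}^n_h\|_e^2$. Thus $E^n$ is nonincreasing and bounded below by $0$, hence convergent, and telescoping gives $4\alpha\sum_{n}\|\mathbf{e}^n_h\|_e^2 \le E^0 < \infty$; in particular the summand tends to zero, i.e. $\|\mathbf{e}^n_h\|_e\to 0$.

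The step I expect to be the main obstacle is not the algebra but justifying the two structural facts that make the cancellation exact. First, the pointwise nodal update must be compatible with the interface bilinear form; this forces the use of the composite Newton-Cotes quadrature $\int_{\gamma_{12}}^*$ whose nodes coincide with the $P_1$ Lagrange nodes, since only then is $\int_{\gamma_{12}}^*$ a symmetric, positive semidefinite discrete inner product for which $\|\cdot\|_*$ is a genuine seminorm and the update expands cleanly inside the square. Second, the fixed-point characterization of $\mathbf{g}^*$, whose self-consistency relies on the interface continuity $\mathbf{u}_{h,1}=\mathbf{u}_{h,2}$ on $\gamma_{12}$, must be extracted from (the construction behind) \cref{equivalent discrete form}. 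Handling the nonlinear penalty term uniformly across iterations through the monotonicity \eqref{an_important_inequality} is the remaining nonstandard ingredient relative to the purely linear Deng estimate, but it enters only through a sign, so it poses no additional difficulty once the discrete-form compatibility is secured.
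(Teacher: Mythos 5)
Your proposal is correct and follows essentially the same route as the paper's own proof: subtract the split reference problem of \cref{equivalent discrete form} to get error equations, derive the interface-energy decay identity (the paper's Lemma~\ref{equality for g star and a}, which your direct substitution of the tested $\Omega_2$ error equation reproduces up to the harmless $1/\alpha$ scaling of the norm), control the penalty cross term via the monotonicity inequality \eqref{an_important_inequality}, and telescope. The only difference is presentational: you make explicit the fixed-point relation for $\mathbf{g}_{ij}^*$ and its reliance on the continuity of $\mathbf{u}_h$ across $\gamma_{12}$, which the paper leaves implicit when it replaces $\mathbf{g}_{ij}^n$ by $\mathbf{g}_{ij}^n-\mathbf{g}_{ij}^*$ in \eqref{g value at p}.
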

		
\begin{algorithm}
		\caption{an iterative contact-resolving hybrid algorithm for solving (\ref{discrete form})}  
		\label{alg:B}
		\begin{itemize}
			\item[1.] Given $\mathbf{g}_{12}^0, \mathbf{g}_{21}^0\in \mathbf{V}_h\left(\gamma\right)$ arbitrarily.
			\item[2.] Recursively find $(\mathbf{u}_1^n, \mathbf{u}_2^n) \in \mathbf{V}_{h,1}\times\mathbf{V}_{h,2}$ by solving the subproblems in parallel:
\begin{align}
\label{discrete_algo_1}
a_1\left(\mathbf{u}_1^n, \mathbf{v}_{h,1}\right) + \alpha \int_{\gamma}^* \mathbf{u}_1^n \cdot \mathbf{v}_{h,1} \, ds = \left(\mathbf{f}, \mathbf{v}_{h,1}\right)_{\Omega_1} +  \int_{\gamma}^* \mathbf{g}_{12}^n \cdot \mathbf{v}_{h,1} \, ds
\end{align} 
for all $\mathbf{v}_{h,1} \in \mathbf{V}_{h,1}$; and
\begin{align}
\label{discrete_algo_22}
a_2\left(\mathbf{u}_2^n, \mathbf{v}_{h,2}\right) + \frac{1}{\delta} \int_{\Gamma_C} \left(u_{2,c}^n\right)^{+} \cdot v_{hc,2} \, ds + \alpha \int_{\gamma}^* \mathbf{u}_2^n \cdot \mathbf{v}_{h,2} \, ds = \left(\mathbf{f}, \mathbf{v}_{h,2}\right)_{\Omega_2} +  \int_{\gamma}^* \mathbf{g}_{21}^n \cdot \mathbf{v}_{h,2} \, ds
\end{align}
for all $\mathbf{v}_{h,2} \in \mathbf{V}_{h,2}$. Here $u_{2,c}^n=\mathbf{u}_2^n\cdot\mathbf{n}_c,v_{hc,2}=\mathbf{v}_{h,2}\cdot\mathbf{n}_c$. (\ref{discrete_algo_22}) is solved via a semismooth Newton subroutine.				
			\item[3.] Update the data of the transmission condition on the interfaces:
$$
\begin{aligned}
 \mathbf{g}_{12}^{n+1}(p) = 2 \alpha \mathbf{u}_2^n(p) - \mathbf{g}_{21}^n(p), \quad \mathbf{g}_{21}^{n+1}(p) = 2 \alpha \mathbf{u}_1^n(p) - \mathbf{g}_{12}^n(p),\quad \forall p \in N_\gamma.
\end{aligned}
$$
\end{itemize} 
\end{algorithm} 
\begin{remark}
The main novelty of Algorithm \ref{alg:B} is that we employ a hybrid methodology
to address nonlinear multiscale contact mechanics. We point out that Algorithm \ref{alg:B} differs notably from that described in \cite{deng2003nonoverlapping}. \cite{deng2003nonoverlapping} considers a nonoverlapping domain decomposition method for a basic second-order linear elliptic problem (i.e. $-\Delta u + \alpha(x) u = f$ in $\Omega$ with $u=0$ on $\partial\Omega$). In \cite{deng2003nonoverlapping}, it is not difficult to update the transmission condition due to the linear nature of the problem.  In contrast, our focus is on efficiently handling nonlinear contact constraints (i.e. (\ref{model_problem_c})). Note that updating the transmission condition on the interfaces in Algorithm \ref{alg:B} is a nontrivial task in our setting. This is because we use \(\mathbf{u}^n_2\) associated with the contact area, which must be obtained through a nonlinear subroutine (i.e. using semismooth Newton
subroutine to solve equation (\ref{discrete_algo_22})).  Consequently, the core step of updating data during iteration is closely related to the inherent nonlinearity of the contact constraints, making it substantially more challenging.
\end{remark}
\subsection{Iterative Contact-resolving hybrid method associated with Mixed FEM}\label{Nonoverlapping domain decomposition method associated with Mixed FEM}
In this section, we begin by presenting the mixed FEM formulation for the penalized contact problem in Section \ref{Mixed formulation}. Building on this formulation, Section \ref{iterative contact-resolving hybrid method for the mixed discretization 2} then introduces the corresponding iterative contact-resolving hybrid algorithm (i.e. Algorithm \ref{an iterative dd algorithm for continuous setting in mixed form}). Convergence result for Algorithm \ref{an iterative dd algorithm for continuous setting in mixed form} is provided in Theorem \ref{convergence for mixed}.
\subsubsection{Mixed formulation for the penalized contact problem} \label{Mixed formulation}
For the contact problem, designing iteration schemes becomes more flexible when considering
the mixed formulation. We will introduce the mixed penalty problems in this section. 

Denote
$
\mathbf{H}^{1/2}_{\Gamma_D}(\Gamma_C)\coloneqq \{\bm{\mu}|_{\Gamma_C}\mid \bm{\mu}\in\mathbf{H}^{1/2}(\partial \Omega), \bm{\mu}=\mathbf{0} \text{ on $\Gamma_D$}\}.
$
The space \(\underline{\bm{H}}(\textbf{div};\Omega,\mathbb{S})\) is defined as follows. 
Let \(\mathbb{S} = \{\underline{\bm{\tau}} \in \mathbb{R}^{n \times n} \mid \underline{\bm{\tau}} = \underline{\bm{\tau}}^\top\}\) denote the set of all \(n \times n\) real symmetric tensors, 
\(\underline{\bm{L}}^2(\Omega;\mathbb{S})\) the space of square-integrable symmetric tensor functions over \(\Omega\), 
and \(\nabla \cdot \underline{\bm{\tau}}\) the row-wise divergence of the tensor field \(\underline{\bm{\tau}}\). Without loss of generality, let $\bm{L}^2(\Omega)$ denote the space of square-integrable vector-valued functions on $\Omega$.
We then define
\[
\underline{\bm{H}}(\textbf{div};\Omega,\mathbb{S}) \coloneqq \bigl\{
\underline{\bm{\tau}} \in \underline{\bm{L}}^2(\Omega;\mathbb{S}) \mid \nabla \cdot \underline{\bm{\tau}} \in \bm{L}^2(\Omega)
\bigr\}.
\]

Let $\underline{\bm{Y}} \coloneqq \{\underline{\bm{\tau}} \in \underline{\bm{H}}(\textbf{div};\Omega,\mathbb{S}) \mid \tau_c = \mathbf{n}_c \cdot (\underline{\bm{\tau}} \cdot \mathbf{n}_c) \in L^2(\Gamma_C),\ \mathbf{t}_c \cdot (\underline{\bm{\tau}} \cdot \mathbf{n}_c) = 0 \text{  on  } \Gamma_C\},$ which is a subspace of $\underline{\bm{\mathit{H}}}(\textbf{div};\Omega,\mathbb{S})$. 
Denote $\underline{\bm{\mathit{K}}}$ as a convex subset of $\underline{\bm{\mathit{Y}}}$:
\[
\underline{\bm{\mathit{K}}}\coloneqq\{\underline{\bm{\tau}}\in \underline{\bm{\mathit{Y}}}\mid \int_{\Gamma_C}(\underline{\bm{\tau}}\cdot\mathbf{n}_c)\cdot\bm{\mu}ds\leq 0,\hspace{0.5em} \forall \bm{\mu}\in \mathbf{H}^{1/2}_{\Gamma_D}(\Gamma_C),\hspace{0.3em} \mu_c=\bm{\mu}\cdot\mathbf{n}_c\geq 0\text{ on }\Gamma_C \}.
\]
By utilizing the standard decomposition of stress and displacement vectors on $\partial\Omega$ in a tangential and a
normal component, we have
$
\underline{\bm{\sigma}}\cdot\mathbf{n}_c=\bm{\sigma}_t+\sigma_c\mathbf{n}_c,$ $\mathbf{u}=\mathbf{u}_t+u_c\mathbf{n}_c.
$
In the set $\underline{\bm{\mathit{K}}}$, the conditions $\mathbf{t}_c \cdot (\underline{\bm{\tau}} \cdot \mathbf{n}_c) = 0$ and $\int_{\Gamma_C} (\underline{\bm{\tau}} \cdot \mathbf{n}_c) \cdot \bm{\mu}  \, ds \leq 0$, together with the standard decompositions, imply that $\bm{\tau}_t=\mathbf{0}$ and $\int_{\Gamma_C}(\underline{\bm{\tau}}\cdot\mathbf{n}_c)\cdot\bm{\mu}ds=\int_{\Gamma_C}(\bm{\tau}_t+\tau_c\mathbf{n}_c)\cdot(\bm{\mu}_t+\mu_c\mathbf{n}_c)=\int_{\Gamma_C}\tau_c\mathbf{n}_c\cdot(\bm{\mu}_t+\mu_c\mathbf{n}_c)=\int_{\Gamma_C}\tau_c\mu_c\leq0$. Thus, $\underline{\bm{\mathit{K}}}$ can be rewritten as
\begin{equation}
\label{an_important_set_K}
\underline{\bm{\mathit{K}}}=\{\underline{\bm{\tau}}\in \underline{\bm{\mathit{Y}}}\mid \int_{\Gamma_C}\tau_c\mu_cds\leq 0,\hspace{0.5em} \forall \bm{\mu}\in \mathbf{H}^{1/2}_{\Gamma_D}(\Gamma_C),\hspace{0.3em} \mu_c\geq 0\text{ on }\Gamma_C \}.
\end{equation}
We consider the following dual mixed formulation of problem (\ref{model_problem}): find $(\underline{\bm{\sigma}},\mathbf{u})\in \underline{\bm{\mathit{K}}}\times \bm{\mathit{L}}^2(\Omega)$ such that
\begin{subequations}
	\label{mixed weak form for our pde}
	\begin{align}		(\mathcal{A}\underline{\bm{\sigma}},\underline{\bm{\tau}}-\underline{\bm{\sigma}})+(\textbf{div}(\underline{\bm{\tau}}-\underline{\bm{\sigma}}),\mathbf{u})&\geq 0  \quad \forall \underline{\bm{\tau}}\in \underline{\bm{\mathit{K}}}, \label{mixed weak form for our pde_a}\\
		(\textbf{div}\underline{\bm{\sigma}},\mathbf{v})&= -(\mathbf{f},\mathbf{v})\quad \forall \mathbf{v}\in \bm{\mathit{L}}^2(\Omega).\label{mixed weak form for our pde_d}
	\end{align}	
\end{subequations}
Having established the well-posedness of problem (\ref{mixed weak form for our pde}) (see, e.g., \cite{slimane2004mixed}), we now introduce the mixed penalty problem: find $(\underline{\bm{\sigma}},\mathbf{u})\in \underline{\bm{\mathit{Y}}}\times \bm{\mathit{L}}^2(\Omega)$ such that (here, $\delta$ is the penalty parameter)
\begin{subequations}
	\label{mixed weak form for our pde + penalty}
	\begin{align}		(\mathcal{A}\underline{\bm{\sigma}},\underline{\bm{\tau}})+(\textbf{div}\underline{\bm{\tau}},\mathbf{u})+\frac{1}{\delta}\int_{\Gamma_C}\sigma_c^+\tau_cds&= 0  \quad \forall \underline{\bm{\tau}}\in \underline{\bm{\mathit{Y}}}, \label{mixed weak form for our pde + penalty_a}\\
		(\textbf{div}\underline{\bm{\sigma}},\mathbf{v})&= -(\mathbf{f},\mathbf{v})\quad \forall \mathbf{v}\in \bm{\mathit{L}}^2(\Omega).\label{mixed weak form for our pde + penalty_d}
	\end{align}	
\end{subequations}
It is clear that (\ref{mixed weak form for our pde + penalty}) can be considered as the weak form of the following boundary value problem
\begin{subequations}
\label{model_problem_update_mixed_form}
    \begin{align}
        \mathcal{A} \underline{\bm{\sigma}} &= \underline{\bm{\epsilon}}(\mathbf{u}) \quad && \text{in } \Omega \label{model_problem_update_mixed_form_d}\\
       -\nabla\cdot(\underline{\bm{\sigma}}(\mathbf{u})) &= \mathbf{f} \quad &&\text{in } \Omega, \label{model_problem_update_mixed_form_a} \\
        \mathbf{u} &= \mathbf{0} \quad &&\text{on } \Gamma_D, \label{model_problem_update_mixed_form_b} \\
        u_c + \frac{1}{\delta}\sigma_c^+ &= 0 \quad &&\text{on } \Gamma_C. \label{model_problem_update_mixed_form_c}
    \end{align}	
\end{subequations}
Let $\underline{\bm{\mathit{\Sigma}}}_h \subset \underline{\bm{\mathit{Y}}}$ and $\bm{\mathit{U}}_h \subset \bm{\mathit{L}}^2(\Omega)$ be conforming discrete spaces (which can be chosen in various ways, see e.g., \cite{arnold2008,johnson1978some,watwood1968equilibrium,chung2025locking,DFLS2008}). Following the discretization approach in \cite{DFLS2008,chung2025locking}, we consider the following discrete problem: find $(\underline{\bm{\sigma}}_h,\mathbf{u}_h) \in \underline{\bm{\mathit{\Sigma}}}_h \times \bm{\mathit{U}}_h$ such that
\begin{subequations}
	\label{penalty_mixed_discrete_formula}
	\begin{align}		
		(\mathcal{A}\underline{\bm{\sigma}}_h,\underline{\bm{\tau}}_h)+(\textbf{div}\underline{\bm{\tau}}_h,\mathbf{u}_h)+\frac{1}{\delta}\int_{\Gamma_C}\sigma_{hc}^+\tau_{hc}ds&=0  \quad \forall \underline{\bm{\tau}}_h\in \underline{\bm{\mathit{\Sigma}}}_h, \label{penalty_mixed_discrete_formula_a}\\
		-(\textbf{div}\underline{\bm{\sigma}}_h,\mathbf{v}_h)&= s(\pi(\tilde{k}^{-1}\mathbf{f}),\mathbf{v}_h)\quad \forall \mathbf{v}_h\in \bm{\mathit{U}}_h,\label{penalty_mixed_discrete_formula_d}
	\end{align}	
\end{subequations}
where $s(\cdot,\cdot)$ is a weighted $L^2$ inner product (see \cite{chung2025locking} or Section \ref{standard cem} for more details). Note that in the discrete scheme (\ref{penalty_mixed_discrete_formula}), we denote the right-hand side as the bilinear form $s(\cdot,\cdot)$ rather than a standard $L^2$ inner product. This choice is made for convenience in certain error analyses and does not fundamentally affect the overall stability or convergence (see \cite{chung2025locking} for details).
The well-posedness of Eqs. (\ref{penalty_mixed_discrete_formula}) can be confirmed by combining the references \cite{chung2025locking,hu2024mixed,slimane2004mixed,kikuchi1988contact}. For all $\underline{\bm{\tau}}_h\in\underline{\bm{\mathit{\Sigma}}}_h$, the normal component $\underline{\bm{\tau}}_h \cdot \mathbf{n}$ is continuous across $\gamma$ because $\underline{\bm{\mathit{\Sigma}}}_h \subset \underline{\bm{\mathit{Y}}}\subset \underline{\bm{H}}(\textbf{div};\Omega,\mathbb{S})$. In particular, we have $\underline{\bm{\tau}}_h \cdot \mathbf{n} \in \mathbf{V}_h$. Building on the splitting technique established in Theorem \ref{equivalent discrete form}, we can then decompose (\ref{penalty_mixed_discrete_formula}) into the equivalent subproblems as (\ref{penalty_mixed_discrete_formula_seperate_1})-(\ref{penalty_mixed_discrete_formula_seperate_2}) below.

For the mixed stress-displacement formulation, we employ a Robin-type transmission condition like the form
``\(
\beta \underline{\bm{\sigma}} \cdot \mathbf{n} + \mathbf{u} = \mathbf{g}
\)"
on the interface $\gamma$ (where \(\beta > 0\) is the transmission coefficient, which will be used below). This condition is also motivated by Lions~\cite{lions1990schwarz}, Tang~\cite{tang1992generalized} and later works such as \cite{deng1997timely, deng2003optimal, deng2003nonoverlapping}.  Let $(\underline{\bm{\sigma}}_h,\mathbf{u}_h) \in \underline{\bm{\mathit{\Sigma}}}_h \times \bm{\mathit{U}}_h$ be the solution of problem (\ref{penalty_mixed_discrete_formula}) and $\underline{\bm{\sigma}}_{h,i}=\underline{\bm{\sigma}}_h|_{\Omega_i},\mathbf{u}_{h,i}=\mathbf{u}_h|_{\Omega_i}$ for $i=1,2$. Then we can split (\ref{penalty_mixed_discrete_formula}) into equivalent subproblems: there exist $\mathbf{g}_{12}^{mix},\mathbf{g}_{21}^{mix} \in \mathbf{V}_h\left(\gamma\right)$ such that $(\underline{\bm{\sigma}}_{h,i},\mathbf{u}_{h,i})\in \underline{\bm{\mathit{\Sigma}}}_{h,i}\times\bm{\mathit{U}}_{h,i}\coloneqq \underline{\bm{\mathit{\Sigma}}}_h|_{\Omega_i}\times\bm{\mathit{U}}_h|_{\Omega_i} \, (i=1,2)$ satisfies
\begin{subequations}
\label{penalty_mixed_discrete_formula_seperate_1}
	\begin{align}		
&(\mathcal{A}\underline{\bm{\sigma}}_{h,1},\underline{\bm{\tau}}_{h,1})_{\Omega_1}+(\textbf{div}\underline{\bm{\tau}}_{h,1},\mathbf{u}_{h,1})_{\Omega_1}+\beta\int_{\gamma}^*\sigma_{hn,1}\tau_{hn,1}ds=\int_{\gamma}^*\mathbf{g}_{12}^{mix}\cdot(\underline{\bm{\tau}}_{h,1}\cdot\mathbf{n}_1)ds  \quad \forall \underline{\bm{\tau}}_{h,1}\in \underline{\bm{\mathit{\Sigma}}}_{h,1}, \label{penalty_mixed_discrete_formula_seperate_1_a}  \\
	&	(\textbf{div}\underline{\bm{\sigma}}_{h,1},\mathbf{v}_{h,1})_{\Omega_1}= s(\pi(\tilde{k}^{-1}\mathbf{f}),\mathbf{v}_{h,1})\quad \forall \mathbf{v}_{h,1}\in \bm{\mathit{U}}_{h,1},\label{penalty_mixed_discrete_formula_seperate_1_d}
	\end{align}	
\end{subequations}
\begin{subequations}
\label{penalty_mixed_discrete_formula_seperate_2}
	\begin{align}		
		&\quad (\mathcal{A}\underline{\bm{\sigma}}_{h,2},\underline{\bm{\tau}}_{h,2})_{\Omega_2}+(\textbf{div}\underline{\bm{\tau}}_{h,2},\mathbf{u}_{h,2})_{\Omega_2}+\beta\int_{\gamma}^*\sigma_{hn,2}\tau_{hn,2}ds+\frac{1}{\delta}\int_{\Gamma_C}(\sigma_{hc,2})^+\tau_{hc,2}ds \nonumber\\
&=\int_{\gamma}^*\mathbf{g}_{21}^{mix}\cdot(\underline{\bm{\tau}}_{h,2}\cdot\mathbf{n}_2)ds  \quad \forall \underline{\bm{\tau}}_{h,2}\in \underline{\bm{\mathit{\Sigma}}}_{h,2}, \label{penalty_mixed_discrete_formula_seperate_2_a} \\
	&\quad	(\textbf{div}\underline{\bm{\sigma}}_{h,2},\mathbf{v}_{h,2})_{\Omega_2}= s(\pi(\tilde{k}^{-1}\mathbf{f}),\mathbf{v}_{h,2})\quad \forall \mathbf{v}_{h,2}\in \bm{\mathit{U}}_{h,2},\label{penalty_mixed_discrete_formula_seperate_2_d}
	\end{align}	
\end{subequations}
where $\tau_{hn,i}=\mathbf{n}_i\cdot(\underline{\bm{\tau}}_h\cdot\mathbf{n}_i)$ ($i=1,2$), $\tau_{hc,2}=\mathbf{n}_c\cdot(\underline{\bm{\tau}}_{h,2}\cdot\mathbf{n}_c)$. Note that the transmission coefficient $\beta$ in the mixed setting (i.e. (\ref{penalty_mixed_discrete_formula_seperate_1})-(\ref{penalty_mixed_discrete_formula_seperate_2})) differ from $\alpha$ in the standard setting (i.e. (\ref{discrete_algo_1})-(\ref{discrete_algo_22})).

\subsubsection{An iterative contact-resolving hybrid method for the mixed discretization (\ref{penalty_mixed_discrete_formula})} \label{iterative contact-resolving hybrid method for the mixed discretization 2}
This section presents an iterative contact-resolving hybrid algorithm (Algorithm \ref{an iterative dd algorithm for continuous setting in mixed form}) for solving (\ref{penalty_mixed_discrete_formula}). The convergence of Algorithm \ref{an iterative dd algorithm for continuous setting in mixed form} is given in Theorem \ref{convergence for mixed}.

\begin{algorithm}
		\caption{an iterative contact-resolving hybrid algorithm for solving (\ref{penalty_mixed_discrete_formula})}  
		\label{an iterative dd algorithm for continuous setting in mixed form}
		\begin{itemize}
			\item[1.] Given $\mathbf{g}_{12}^0, \mathbf{g}_{21}^0\in \mathbf{V}_h\left(\gamma\right)$ arbitrarily.
			\item[2.] Recursively find $(\underline{\bm{\sigma}}^n_1,\mathbf{u}^n_1)\in \underline{\bm{\Sigma}}_{h,1}\times\bm{U}_{h,1}$ by solving
\begin{subequations}
\label{eq:system}
	\begin{align}
(\mathcal{A}\underline{\bm{\sigma}}_1^n,\underline{\bm{\tau}}_{h,1})_{\Omega_1}
		+ (\textbf{div}\,\underline{\bm{\tau}}_{h,1},\mathbf{u}_1^n)_{\Omega_1}
		+ \beta\int_{\gamma}^*(\sigma_1^n)_n\tau_{hn,1}\,ds 
		&= \int_{\gamma}^*\mathbf{g}_{12}^{n}\cdot(\underline{\bm{\tau}}_{h,1}\cdot\mathbf{n}_1)\,ds,  \label{eq:sub1} \\
		(\textbf{div}\,\underline{\bm{\sigma}}_1^n,\mathbf{v}_{h,1})_{\Omega_1} 
		&= s\bigl(\pi(\tilde{k}^{-1}\mathbf{f}),\mathbf{v}_{h,1}\bigr), \label{eq:sub2}
	\end{align}
\end{subequations}
for all $(\underline{\bm{\tau}}_{h,1},\mathbf{v}_{h,1})\in\underline{\bm{\mathit{\Sigma}}}_{h,1}\times \bm{\mathit{U}}_{h,1}$ and find $(\underline{\bm{\sigma}}^n_2,\mathbf{u}_2^n)\in \underline{\bm{\Sigma}}_{h,2}\times\bm{U}_{h,2}$ by solving
\begin{subequations}
\label{eq:system2}
\begin{align}
&(\mathcal{A}\underline{\bm{\sigma}}^n_2,\underline{\bm{\tau}}_{h,2})_{\Omega_2} 
    + (\textbf{div}\,\underline{\bm{\tau}}_{h,2},\mathbf{u}^n_2)_{\Omega_2}
    + \beta\int_{\gamma}^*(\sigma^n_2)_n\tau_{hn,2}\,ds
    + \frac{1}{\delta}\int_{\Gamma_C}(\sigma^n_{2,c})^+\tau_{hc,2}\,ds = \int_{\gamma}^*\mathbf{g}_{21}^{n}\cdot(\underline{\bm{\tau}}_{h,2}\cdot\mathbf{n}_2)\,ds, \label{eq:2a} \\
&(\textbf{div}\,\underline{\bm{\sigma}}^n_2,\mathbf{v}_{h,2})_{\Omega_2} 
    = (\mathbf{f},\mathbf{v}_{h,2})_{\Omega_2}, \label{eq:2b}
\end{align}
\end{subequations}
for all $(\underline{\bm{\tau}}_{h,2},\mathbf{v}_{h,2})\in\underline{\bm{\mathit{\Sigma}}}_{h,2}\times \bm{\mathit{U}}_{h,2}$.
Here $(\sigma^n_i)_n=\mathbf{n}_i\cdot(\underline{\bm{\sigma}}^n_i\cdot\mathbf{n}_i),\sigma^n_{2,c}=\mathbf{n}_c\cdot(\underline{\bm{\sigma}}^n_2\cdot\mathbf{n}_c),\tau_{hn,i}=\mathbf{n}_i\cdot(\underline{\bm{\tau}}_{h,i}\cdot\mathbf{n}_i),\tau_{hc,2}=\mathbf{n}_c\cdot(\underline{\bm{\tau}}_{h,2}\cdot\mathbf{n}_c)$ for $i=1,2$. (\ref{eq:system2}) is solved via a semismooth Newton subroutine.		

			\item[3.] Update the data of the transmission condition on the interfaces:
         $$
            \begin{array}{ll}
            \mathbf{g}_{12}^{n+1}(p)=-2 \beta\underline{\bm{\sigma}}^n_2(p)\cdot\mathbf{n}_2+\mathbf{g}_{21}^n(p),\quad \mathbf{g}_{21}^{n+1}(p)=-2 \beta \underline{\bm{\sigma}}^n_1(p)\cdot\mathbf{n}_1 +\mathbf{g}_{12}^n(p),\quad \forall p \in N_\gamma. 
            \end{array}
            $$
		\end{itemize} 
\end{algorithm}
\begin{remark}
The primary novelty of Algorithm \ref{an iterative dd algorithm for continuous setting in mixed form} is to present a mixed formulation within the hybrid framework for the nonlinear contact problem. This formulation enables the simulation of (nearly) incompressible materials, which is not a simple extension of Algorithm~\ref{alg:B}. Crucially, in Algorithm \ref{an iterative dd algorithm for continuous setting in mixed form}, the stress-related nonlinear term $(\sigma^n_{2,c})^+$ must be solved iteratively in \(\Omega_2\), and the transmission condition is associated with the stress tensor, which is different from those in Algorithm \ref{alg:B}. From a theoretical standpoint, we can estimate the errors for both stress and displacement without postprocessing (see Theorem \ref{convergence for mixed} for more details). Numerically, the approach presents greater complexity than the standard case due to the enlarged system size and the need to properly handle updates to the transmission data, which now involve the normal component of $\underline{\bm{\sigma}}^n_2$.
\end{remark}
Next we give the convergence result for Algorithm \ref{an iterative dd algorithm for continuous setting in mixed form}. For the ease of notation expressions, we denote
$$
\begin{aligned}
 \underline{\mathbf{e}}^n_\sigma &= \left(\underline{\mathbf{e}}^n_{\sigma,1},\underline{\mathbf{e}}^n_{\sigma,2}\right) := \left(\underline{\bm{\sigma}}^n_1-\underline{\bm{\sigma}}_{h,1}, \underline{\bm{\sigma}}^n_2-\underline{\bm{\sigma}}_{h,2}\right) \in \underline{\bm{\mathit{\Sigma}}}_{h,1}\times\underline{\bm{\mathit{\Sigma}}}_{h,2},\\
 \mathbf{e}^n_u &= \left(\mathbf{e}^n_{u,1},\mathbf{e}^n_{u,2}\right) := \left(\mathbf{u}^n_1-\mathbf{u}_{h,1}, \mathbf{u}^n_2-\mathbf{u}_{h,2}\right) \in \bm{\mathit{U}}_{h,1}\times\bm{\mathit{U}}_{h,2}
\end{aligned}
$$
and the norms as follows:
\begin{equation} \label{important_norms}
\left\|\underline{\mathbf{e}}^n_\sigma\right\|_{\mathcal{A}}^2=\sum_{i=1}^2(\mathcal{A}\underline{\mathbf{e}}^n_{\sigma,i},\underline{\mathbf{e}}^n_{\sigma,i})_{\Omega_i},\quad
\left\|\underline{\mathbf{e}}^n_\sigma\right\|_e^2 = \left\|\underline{\mathbf{e}}^n_\sigma\right\|_{\mathcal{A}}^2 + \frac{1}{\delta}\int_{\Gamma_C} \left|(\sigma^n_{2,c})^+ - (\sigma_{hc,2})^+\right|^2 \, ds,\quad \left\|\mathbf{e}^n_u\right\|_{L^2}^2=\sum_{i=1}^2(\mathbf{e}^n_{u,i},\mathbf{e}^n_{u,i})_{\Omega_i},
\end{equation}
where $\sigma^n_{2,c}=\mathbf{n}_c\cdot(\underline{\bm{\sigma}}^n_2\cdot\mathbf{n}_c)$, $\sigma_{hc,2}=\mathbf{n}_c\cdot(\underline{\bm{\sigma}}_{h,2}\cdot\mathbf{n}_c)$.
\begin{theorem}\label{convergence for mixed}
Let $(\underline{\bm{\sigma}}_h,\mathbf{u}_h) \in \underline{\bm{\mathit{\Sigma}}}_h \times \bm{\mathit{U}}_h$ be the solution of problem (\ref{penalty_mixed_discrete_formula}) and $\underline{\bm{\sigma}}_{h,i}=\underline{\bm{\sigma}}_h|_{\Omega_i},\mathbf{u}_{h,i}=\mathbf{u}_h|_{\Omega_i}$ for $i=1,2$. Let $(\underline{\bm{\sigma}}_i^n, \mathbf{u}_i^n) \in \underline{\bm{\mathit{\Sigma}}}_{h,i}\times\bm{\mathit{U}}_{h,i} \, (i=1,2)$ be the solutions of subproblems (\ref{eq:system}) and (\ref{eq:system2}) at iterative step $n$. Then we have
$$
\left\|\underline{\mathbf{e}}^n_\sigma\right\|_e^2 + \left\|\mathbf{e}^n_u\right\|_{L^2}^2 \rightarrow 0 \text{ as } n \rightarrow \infty,
$$
where the norms $\left\|\underline{\mathbf{e}}^n_\sigma\right\|_e, \left\|\mathbf{e}^n_u\right\|_{L^2} $ are defined as in (\ref{important_norms}).
\end{theorem}

\subsection{Iterative contact-resolving hybrid method associated with standard CEM-GMsFEM}\label{Nonoverlapping domain decomposition method associated with standard CEM-GMsFEM}
In this section, we present an iterative contact-resolving hybrid method associated with the standard CEM-GMsFEM \cite{chung2023multiscale, ye2023constraint, chung2018, chung2025locking}. The main novelty lies in introducing multiscale model reduction into the hybrid framework to improve computational efficiency for nonlinear contact problems with high-contrast coefficients.
Here, the nonlinear contact region $\Omega_2$ occupies merely one coarse mesh wide and therefore does not dominate the primary computational cost. However, the larger domain $\Omega_1$ covers almost the entire domain, and resolving its fine-scale heterogeneities can be computationally expensive, especially with high-contrast coefficients. To address this, we integrate multiscale reduction techniques to lower the computational burden.

The splitting formulation for this setting is described in Section \ref{Discrete formulation with standard CEM-GMsFEM}, while the construction of multiscale basis functions within the larger subdomain is presented in Section \ref{standard cem}. An iterative domain decomposition procedure (Algorithm \ref{alg:A}) and the corresponding convergence (Theorem \ref{convergence in standard cem}) are then developed in Section \ref{iterative domain decomposition method for standard problem}.
A key aspect of the present framework is the careful treatment of boundary conditions (see different stages in Algorithm \ref{alg:A}). The solution $\mathbf{u}_h$ obtained from (\ref{discrete form}) is regarded as the reference solution.
\subsubsection{Splitting system for the method associated with standard CEM-GMsFEM}\label{Discrete formulation with standard CEM-GMsFEM}
Analogous to Theorem \ref{equivalent discrete form}, we develop the splitting subproblem formulation for the iterative contact-resolving hybrid method associated with standard CEM-GMsFEM.
Using the multiscale space $\mathbf{V}_{\text{ms}}$ (constructed in Section \ref{standard cem}) for the displacement field in $\Omega_1$ and the standard FEM space $\mathbf{V}_{h,2}$ for that in $\Omega_2$, we can obtain the numerical solution $(\mathbf{u}_{\text{ms},1}, \mathbf{u}_{h,2}) \in \mathbf{V}_{\text{ms}} \times \mathbf{V}_{h,2}$ by solving the analogous subproblems described in Theorem \ref{equivalent discrete form}, with the only modification being the replacement of the space $\mathbf{V}_{h,1}$ by $\mathbf{V}_{\text{ms}}$. We omit the analogous formulations here and focus instead on the multiscale reduction techniques below.
\subsubsection{The construction of the multiscale basis functions for standard CEM-GMsFEM}
\label{standard cem}
\begin{figure}[tbph] 
		\centering 
		\includegraphics[height=5.5cm,width=9.5cm]{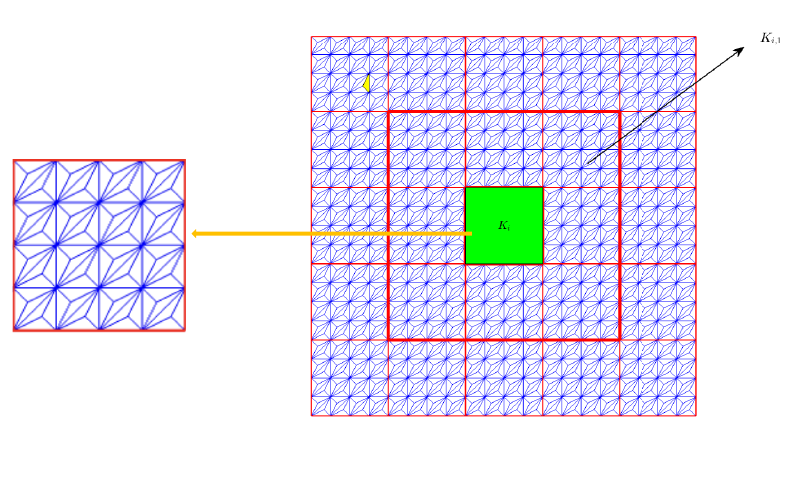} 
		\caption{An illustration of a coarse element $K_i$ (green, and left for clearer), a finite element (a triangle with blue edges), and an oversampling region $K_{i,1}$ by extending $K_i$ outward by one coarse mesh layer (thick red line)}
		\label{coarse_bigger}
\end{figure}
In this section, we will present the construction of the multiscale basis functions for the standard CEM-GMsFEM in $\Omega_1$ (see \cite{chung2023multiscale,chung2025locking}).
Let $\mathcal{T}_H := \bigcup_{i=1}^N \{K_i\}$ be a conforming quasi-uniform coarse mesh of the domain $\Omega$. Here $H$ is the coarse mesh size, and $N$ is the number of coarse elements $K_i$. Each coarse element is further subdivided into a connected union of fine elements. The corresponding fine mesh $\mathcal{T}_h$ is obtained by refining $\mathcal{T}_H$. Define $K_{i,m}$ as the oversampling region by extending $K_i$ by $m$ coarse mesh layers.
For an illustrative example, see Figure \ref{coarse_bigger} (same as \cite[Figure 3.1]{chung2025locking}), which depicts
the coarse mesh, fine mesh, and an oversampling region.

The construction of the basis functions are developed on Figure \ref{coarse_bigger}. 
Let $K_i$ be the $i$-th coarse element and let $\mathbf{V}_{h,i}\left(K_i\right)$ be the restriction of $\mathbf{V}_{h,i}$ on $K_i$. We solve a local spectral problem: for each $K_i$, find a real number $\lambda_j^i \in \mathbb{R}$ and a function $\bm{\phi}_j^i \in \mathbf{V}_{h,i}\left(K_i\right)$ such that
\begin{equation}
\label{local spectral problem}
a_{1,i}\left(\bm{\phi}^i_j, \mathbf{v}\right)+\alpha \int_{\gamma\cap\partial K_i}^* \bm{\phi}^i_j \cdot \mathbf{v} d s=\lambda^i_j s_i(\bm{\phi}^i_j,\mathbf{v}),\quad \forall \mathbf{v}\in\mathbf{V}_{h,i}\left(K_i\right)
\end{equation}
where 
$$a_{1,i}(\mathbf{w},\mathbf{v})=\int_{K_i}\mathcal{A}^{-1}\underline{\bm{\epsilon}}(\mathbf{w})\colon\underline{\bm{\epsilon}}(\mathbf{v})dx,\quad s_i(\mathbf{w},\mathbf{v})=\int_{K_i} \widetilde{k} \mathbf{w} \mathbf{v} dx,\quad \widetilde{k}=k H^{-2}, \quad k=\lambda+2 \mu.$$
Assume that $s_i(\bm{\phi}^i_j,\bm{\phi}^i_j)=1$ and we arrange the eigenvalues of (\ref{local spectral problem}) in non-decreasing order $0\leq\lambda_1\leq\lambda_2\leq\cdot\cdot\cdot\leq\lambda_{L_i}$, where $L_i$ is the dimension of the space $\mathbf{V}_{h,i}\left(K_i\right)$. For each $i\in\{1,2,\cdot\cdot\cdot,N\}$, choose the first $l_i$ $(1\leq l_i\leq L_i)$ eigenfunctions $\{\bm{\phi}^i_j\}_{j=1}^{l_i}$ corresponding to the first $l_i$ smallest eigenvalues. We define the local auxiliary multiscale space $\mathbf{V}_\text{aux}(K_i)$ for the displacement field as 
$\mathbf{V}_\text{aux}(K_i)\coloneqq\text{span}\{\bm{\phi}^i_j\mid 1\leq j\leq l_i\}.$
The global auxiliary multiscale finite element space $\mathbf{V}_\text{aux}$ is defined by $\mathbf{V}_\text{aux}=\oplus_i\mathbf{V}_\text{aux}(K_i)$.
Then we define the inner product $s(\cdot,\cdot)$ on the global auxiliary multiscale space $\mathbf{V}_\text{aux}$ through
\(
s(\mathbf{p},\mathbf{q}) = \sum_{i=1}^N s_i(\mathbf{p},\mathbf{q})\). And \(\norm{\mathbf{p}}_s = s(\mathbf{p},\mathbf{p})^{1/2},\) for all $\mathbf{p} \in \mathbf{V}_\text{aux}$.
We remark that $s(\cdot,\cdot)$ and $\norm{\cdot}_s$ also constitute a valid inner product and norm on the space $\bm{\mathit{L}}^2(\Omega)$.
Next we define the projection operator $\pi_i \colon \bm{\mathit{L}}^2(K_i) \to \mathbf{V}_{\text{aux}}(K_i)$ 
with respect to the inner product $s_i(\cdot,\cdot)$. Specifically, for any $\mathbf{q} \in \bm{\mathit{L}}^2(K_i)$, 
the operator $\pi_i$ is given by
\[
\pi_i(\mathbf{q}) = \sum_{j=1}^{l_i} s_i(\mathbf{q}, \bm{\phi}_j^i)\bm{\phi}_j^i.
\]
Similarly, we define the global projection operator $\pi \colon \bm{\mathit{L}}^2(\Omega) \to \mathbf{V}_{\mathrm{aux}}$ 
with respect to the inner product $s(\cdot,\cdot)$. For any $\mathbf{q} \in \bm{\mathit{L}}^2(\Omega)$,
this operator takes the form
$
\pi(\mathbf{q}) = \sum_{i=1}^N \sum_{j=1}^{l_i} s_i(\mathbf{q}, \mathbf{p}_j^i) \mathbf{p}_j^i.
$
Clearly $\pi = \sum_{i=1}^N \pi_i$.

Next we present the construction of the multiscale basis functions. We directly consider the relaxed constraint energy minimizing generalized multiscale finite element method \cite{chung2018}. We solve the following unconstrained minimization problem: find $\bm{\psi}_{j, \text{ms}}^i \in \mathbf{V}_{h,1}\left(K_{i,m}\right)$ such that 

$$
\bm{\psi}_{j, \text{ms}}^i=\operatorname{argmin}\left\{a_1(\bm{\psi}, \bm{\psi})+\alpha \int_{\gamma}^* \bm{\psi} \cdot \bm{\psi}ds+s\left(\pi \bm{\psi}-\bm{\phi}_j^i, \pi \bm{\psi}-\bm{\phi}_j^i\right) \colon \bm{\psi}\in \mathbf{V}^m_{i,h}\right\},
$$
where $\mathbf{V}_{i, h}^m=\left\{\mathbf{v} \in \mathbf{V}_{h,1}\left(K_{i,m}\right)\mid \mathbf{v}=\mathbf{0}\text{ on $\Gamma_D\cap \partial K_{i,m}$ or $\Omega_1\cap\partial K_{i,m}$}\right\}$. Clearly $\mathbf{V}_{i, h}^m \subset \mathbf{V}_{h, 1}$. $a_1(\cdot,\cdot)$ has been defined in Section \ref{Nonoverlapping domain decomposition method associated with Standard FEM formulation}.
It can be shown that $\bm{\psi}_{j, \text{ms}}^i$ satisfies the following variational form
\begin{equation}
\label{to obtain multiscale basis}
a_1\left(\bm{\psi}_{j,\text{ms}}^i, \mathbf{v}\right)+\alpha \int_{\gamma}^* \bm{\psi}_{j,\text{ms}}^i \cdot \mathbf{v} d s+s\left(\pi \bm{\psi}_{j,\text{ms}}^i, \pi \mathbf{v}\right)=s\left(\bm{\phi}_j^i, \pi \mathbf{v}\right)
\end{equation}
for all $\mathbf{v} \in \mathbf{V}_{i, h}^m$.
Denote
$
\mathbf{V}_{\text{ms}}\coloneqq\text{span}\{\bm{\psi}^i_{j,\text{ms}}\mid 1\leq j\leq l_i,1\leq i\leq N\}$.
Following the methodology in \cite{ye2023constraint}, we compute $\mathbf{u}_{\text{ms},1}$ using a computational procedure comprising the following steps:

\textbf{Step 1:} Find $\mathcal{N}_i^m \mathbf{g}_{12}^*$ such that for all $\mathbf{v}\in \mathbf{V}_{i, h}^m$ ($\mathcal{N}_i^m$ is an operator mapping from $\mathbf{V}_h(\gamma)$ to $\mathbf{V}_{i, h}^m$),
$$\quad a_1\left(\mathcal{N}_i^m \mathbf{g}_{12}^*, \mathbf{v}\right)+\alpha \int_{\gamma}^* \mathcal{N}_i^m \mathbf{g}_{12}^*\cdot \mathbf{v} ds+s\left(\pi\left(\mathcal{N}_i^m \mathbf{g}_{12}^*\right), \pi \mathbf{v}\right)=\int_{\partial K_i \cap \gamma}^* \mathbf{g}_{12}^*\cdot \mathbf{v}ds.$$
Then we obtain $\mathcal{N}^m \mathbf{g}_{12}^*=\sum_{i=1}^N \mathcal{N}_i^m \mathbf{g}_{12}^*$.

\textbf{Step 2:} Prepare the multiscale space $\mathbf{V}_{\text{ms}}$ via (\ref{local spectral problem}) and (\ref{to obtain multiscale basis}).

\textbf{Step 3:} Solve $\mathbf{w}^m$ such that for all $\mathbf{v} \in \mathbf{V}_{\text{ms}}$,
$$
a_1\left(\mathbf{w}^m, \mathbf{v}\right)+\alpha \int_{\gamma}^* \mathbf{w}^m\cdot \mathbf{v} ds=(\mathbf{f},\mathbf{v})_{\Omega_1} +\int_{\gamma}^* \mathbf{g}_{12}^* \cdot \mathbf{v}ds-\left[a_1\left(\mathcal{N}^m \mathbf{g}_{12}^*, \mathbf{v}\right)+\alpha \int_{\gamma}^* \mathcal{N}^m \mathbf{g}_{12}^*\cdot\mathbf{v}ds\right].
$$

\textbf{Step 4:} Construct the numerical solution $\mathbf{u}_{\text{ms},1}$ as
$
\mathbf{u}_{\text{ms},1} \approx \mathbf{w}^m + \mathcal{N}^m \mathbf{g}_{12}^*.
$

The four steps form a direct solver to obtain $\mathbf{u}_{\text{ms},1}$, which may have an $O(H)$ error with respect to the reference solution $\mathbf{u}_h$ obtained from (\ref{discrete form}). We include these detailed steps to clarify the multiscale reduction procedure; furthermore, the multiscale space $\mathbf{V}_{\text{ms}}$ will be reused extensively in Algorithm~\ref{alg:A} below.
\subsubsection{An iterative contact-resolving hybrid method associated with standard CEM-GMsFEM}
\label{iterative domain decomposition method for standard problem}
In this section, we define the iterative procedure associated with standard CEM-GMsFEM as Algorithm \ref{alg:A} and confirm its convergence in Theorem \ref{convergence in standard cem}. Recall the norm notation that $\norm{\mathbf{v}}_{a_i}^2=a_i(\mathbf{v},\mathbf{v})=\int_{\Omega_i}\mathcal{A}^{-1}\underline{\bm{\epsilon}}(\mathbf{v})\colon\underline{\bm{\epsilon}}(\mathbf{v})dx$ for $i=1,2$.
\begin{algorithm}
		\caption{an iterative contact-resolving hybrid algorithm by using standard multiscale techniques} 
		\label{alg:A}
		\begin{itemize}
			\item[1.] Given $\mathbf{g}_{12}^0, \mathbf{g}_{21}^0\in \mathbf{V}_h\left(\gamma\right)$ arbitrarily.
			\item[2.] Recursively find $(\mathbf{u}_1^n,\mathbf{u}_2^n)\in\mathbf{V}_{\text{ms}}\times\mathbf{V}_{h,2}$ by solving the subproblems in parallel:

            (i) To obtain $\mathbf{u}_1^n\in\mathbf{V}_{\text{ms}}$, we consider the following three stages:

            \textbf{Stage a:} Find $\mathcal{N}_i^m \mathbf{g}_{12}^n$ such that for all $\mathbf{v}\in \mathbf{V}_{i, h}^m$,
$$\quad a_1\left(\mathcal{N}_i^m \mathbf{g}_{12}^n, \mathbf{v}\right)+\alpha \int_{\gamma}^* \mathcal{N}_i^m \mathbf{g}_{12}^n\cdot \mathbf{v} ds+s\left(\pi\left(\mathcal{N}_i^m \mathbf{g}_{12}^n\right), \pi \mathbf{v}\right)=\int_{\partial K_i \cap \gamma}^* \mathbf{g}_{12}^n\cdot \mathbf{v}ds.$$
Then we obtain $\mathcal{N}^m \mathbf{g}_{12}^n=\sum_{i=1}^N \mathcal{N}_i^m \mathbf{g}_{12}^n$.

\textbf{Stage b:} Solve $\mathbf{w}^m_n$ such that of for all $\mathbf{v}\in\mathbf{V}_{\text{ms}}$,
\begin{align*}
 a_1\left(\mathbf{w}^m_n, \mathbf{v}\right)+\alpha \int_{\gamma}^* \mathbf{w}^m_n\cdot \mathbf{v} ds=(\mathbf{f},\mathbf{v})_{\Omega_1} +\int_{\gamma}^* \mathbf{g}_{12}^n \cdot \mathbf{v}ds -\left[a_1\left(\mathcal{N}^m \mathbf{g}_{12}^n, \mathbf{v}\right)+\alpha \int_{\gamma}^* \mathcal{N}^m \mathbf{g}_{12}^n\cdot\mathbf{v}ds\right].
\end{align*}

\textbf{Stage c:} Construct the numerical solution $\mathbf{u}^n_{1}$ as
$
\mathbf{u}^n_{1} \approx \mathbf{w}^m_n+\mathcal{N}^m \mathbf{g}_{12}^n.
$

(ii) Obtain $\mathbf{u}_2^n\in\mathbf{V}_{h,2}$ by solving the same equation (\ref{discrete_algo_22}) in Algorithm \ref{alg:B}.	
			\item[3.] Update the data by applying the same transmission condition as that specified in Algorithm \ref{alg:B}.
		\end{itemize} 
\end{algorithm}
\begin{theorem}
\label{convergence in standard cem}
Let $\mathbf{u}_h \in \mathbf{V}_{h,\Gamma_D}$ be the solution of problem (\ref{discrete form}) and $\mathbf{u}_{h,i} = \left.\mathbf{u}_h\right|_{\Omega_i}$ for $i=1,2$. For each iteration step $n \geq 0$, let $(\mathbf{u}_1^n, \mathbf{u}_2^n) \in \mathbf{V}_{\mathrm{ms}} \times \mathbf{V}_{h,2}$ be the numerical approximations generated by Algorithm \ref{alg:A}. 
Then there exists a positive sequence $\{ \epsilon_n\}$ with $ \epsilon_n \to 0$ as $n \to \infty$ such that:
\[
\left( \sum_{i=1}^2 \left\|\mathbf{u}_i^n - \mathbf{u}_{h,i}\right\|_{a_i}^2 + \frac{1}{\delta} \int_{\Gamma_C} \bigl[(u_{2,c}^n)^+ - (u_{hc,2})^+\bigr]^2 \, ds \right)^{\!1/2} \leq  \epsilon_n \to 0 \text{ as } n \rightarrow \infty.
\]
\end{theorem}

\subsection{Iterative contact-resolving hybrid method associated with mixed CEM-GMsFEM} \label{Nonoverlapping domain decomposition method associated with mixed CEM-GMsFEM} 
In this section, we develop an iterative contact-resolving hybrid method based on a combination of mixed CEM-GMsFEM and mixed FEM. The method applies the mixed CEM-GMsFEM to the larger domain $\Omega_1$ and the standard mixed FEM to the smaller domain $\Omega_2$ containing the contact boundary. The splitting system is described in Section \ref{Discrete formulation with mixed CEM-GMsFEM}, and the construction of multiscale basis functions is stated in Section \ref{mixed cem}. Section \ref{iterative domain decomposition method for mixed problem} presents the iterative procedure (Algorithm \ref{alg:BB}) and corresponding convergence (Theorem \ref{convergence in mixed cem}).
$(\underline{\bm{\sigma}}_h,\mathbf{u}_h)$ computed from (\ref{penalty_mixed_discrete_formula}) is taken as the reference solution for the mixed multiscale formulation.
\subsubsection{Splitting system for the method associated with mixed CEM-GMsFEM}\label{Discrete formulation with mixed CEM-GMsFEM}
Referring to (\ref{penalty_mixed_discrete_formula_seperate_1})--(\ref{penalty_mixed_discrete_formula_seperate_2}), we can develop the splitting subproblem formulation for the iterative contact-resolving hybrid method associated with mixed CEM-GMsFEM.
Utilizing the multiscale space $\Sigma_\text{ms}$ and $U_\text{aux}$ (to be constructed in Section \ref{mixed cem}) to approximate the stress and displacement in $\Omega_1$ and the mixed FEM space in $\Omega_2$, we can obtain the numerical solution $(\underline{\bm{\sigma}}_{\text{ms},1},\mathbf{u}_{\text{ms},1})$ in $\Omega_1$ and $(\underline{\bm{\sigma}}_{h,2},\mathbf{u}_{h, 2})$ in $\Omega_2$ by solving subproblems similar to (\ref{penalty_mixed_discrete_formula_seperate_1})--(\ref{penalty_mixed_discrete_formula_seperate_2}), with the only modification being the replacement of the spaces. We omit the analogous formulations here and focus instead on the multiscale reduction techniques below.

\subsubsection{The construction of the multiscale basis functions for mixed CEM-GMsFEM}
\label{mixed cem}
This section presents the construction of multiscale basis functions for the mixed CEM-GMsFEM in $\Omega_1$ (see \cite{chung2025locking}), adopting all notation from Section \ref{standard cem}. The basis functions are constructed over the mesh illustrated in Figure \ref{coarse_bigger}.
The computation of the multiscale basis functions is divided into two stages. 
The first stage consists of constructing the multiscale space for the displacement $\mathbf{u}$. 
In the second stage, we will use the multiscale space for displacement to construct a multiscale space for the stress $\bm{\underline{\sigma}}$. 
We point out that the supports of displacement basis are the coarse elements. 
For stress basis functions, the support is an oversampling region containing the support of displacement basis functions.

\textbf{Stage I:}

 We will construct a set of auxiliary multiscale basis functions for displacement on each coarse element $K_i$ by solving a local spectral problem. First, we define some notation. For a general set $R$, we define $\bm{\mathit{U}}_{h,1}(R)$ as the restriction of $\bm{\mathit{U}}_{h,1}$ on $R\subset\Omega_1$ and $\underline{\bm{\mathit{\Sigma}}}_{h,1}(R)\coloneqq\{\underline{\bm{\tau}}_h\in\underline{\bm{\mathit{\Sigma}}}_{h,1}\colon\underline{\bm{\tau}}_h\mathbf{n}=\mathbf{0}\text{ on } \Omega_1\cap \partial R\}$. Note that $\underline{\bm{\mathit{\Sigma}}}_{h,1}(R)$ is with homogeneous traction boundary condition on $R$, which confirms the conforming property of the multiscale bases in the construction process.

Next, we define the local spectral problem. For each coarse element $K_i\subset\Omega_1$, we solve the eigenvalue problem: find $(\bm{\underline{\phi}}^i_j,\mathbf{p}^i_j)\in \underline{\bm{\mathit{\Sigma}}}_{h,1}(K_i)\times \bm{\mathit{U}}_{h,1}(K_i)$ and $\lambda^i_j\in\mathbb{R}$ such that
\begin{subequations}
	\label{local spectral problem mix}
	\begin{align}		
		(\mathcal{A}\bm{\underline{\phi}}^i_j,\underline{\bm{\tau}}_h)_{K_i}+(\textbf{div}\underline{\bm{\tau}}_h,\mathbf{p}^i_j)_{K_i}+\beta\int_{\gamma\cap\partial K_i}\phi^i_{j,n}\tau_{hn}\,ds&=0  \quad \forall \underline{\bm{\tau}}_h\in \underline{\bm{\mathit{\Sigma}}}_{h,1}(K_i), \label{local spectral problem_a}\\
		-(\textbf{div}\bm{\underline{\phi}}^i_j,\mathbf{v}_h)_{K_i}&= \lambda^i_js_i(\mathbf{p}^i_j,\mathbf{v}_h)\quad \forall \mathbf{v}_h\in \bm{\mathit{U}}_{h,1}(K_i),\label{local spectral problem_d}
	\end{align}	
\end{subequations}
where $\phi^i_{j,n}=\mathbf{n}_1\cdot(\bm{\underline{\phi}}^i_j\cdot\mathbf{n}_1)$, $\tau_{hn}=\mathbf{n}_1\cdot(\underline{\bm{\tau}}_h\cdot\mathbf{n}_1)$.
We arrange the eigenvalues of (\ref{local spectral problem mix}) in non-decreasing order $0\leq\lambda_1\leq\lambda_2\leq\cdot\cdot\cdot\leq\lambda_{L'_i}$, where $L'_i$ is the dimension of the space $\bm{\mathit{U}}_{h,1}(K_i)$. For each $i\in\{1,2,\cdot\cdot\cdot,N\}$, choose the first $l_i$ $(1\leq l_i\leq L'_i)$ eigenfunctions $\{\mathbf{p}^i_j\}_{j=1}^{l_i}$ corresponding to the first $l_i$ smallest eigenvalues. 
For the numerical tests, we used $l_i = 3$ on each $K_i$ ($1 \leq i \leq N$), which is sufficient to ensure the required accuracy (though larger $l_i$ can further enhance it, as shown in \cite[Table 5.2]{chung2025locking}).
Then we define the local auxiliary multiscale space $U_\text{aux}(K_i)$ for displacement as $U_\text{aux}(K_i)\coloneqq\text{span}\{\mathbf{p}^i_j\mid 1\leq j\leq l_i\}.$ The global auxiliary multiscale space $U_\text{aux}$ is defined by $U_\text{aux}=\oplus_iU_\text{aux}(K_i)$. $U_\text{aux}$ is the approximation space for displacement.

\textbf{Stage II: }

Then we present the construction of the stress basis functions. We directly present the relaxed version of stress multiscale basis functions (see \cite{chung2025locking}). Let $\mathbf{p}^i_j\in U_\text{aux}$ be a given displacement basis function supported in $K_i$. We will define a stress basis function $\bm{\underline{\psi}}^i_{j,\text{ms}}\in\underline{\bm{\mathit{\Sigma}}}_h(K_{i,m})$ by solving (\ref{multiscale_basis_mix}). The multiscale space is defined as $\Sigma_\text{ms}\coloneqq\text{span}\{\bm{\underline{\psi}}^i_{j,\text{ms}}\}$. Note that the basis function is supported in $K_{i,m}$, which is a union of connected coarse elements and contains $K_i$. We define $J_i$ as the set of indices such that if $k\in J_i$, then $K_k\in K_{i,m}$. We also define $U_\text{aux}(K_{i,m})=\text{span}\{\mathbf{p}^k_j\mid 1\leq j\leq l_k,k\in J_i\}$.

We find $\bm{\underline{\psi}}^i_{j,\text{ms}}\in\underline{\bm{\mathit{\Sigma}}}_{h,1}(K_{i,m})$ and $\mathbf{q}^i_{j,\text{ms}}\in \bm{\mathit{U}}_{h,1}(K_{i,m})$ such that
\begin{subequations}
	\label{multiscale_basis_mix}
	\begin{align}		
	(\mathcal{A}\bm{\underline{\psi}}^i_{j,\text{ms}},\underline{\bm{\tau}}_h)_{\Omega_1}+(\textbf{div}\underline{\bm{\tau}}_h,\mathbf{q}^i_{j,\text{ms}})_{\Omega_1}+\beta\int_{\gamma}\psi^i_{j,\text{msn}}\tau_{hn}\,ds&=0  \quad \forall \underline{\bm{\tau}}_h\in \underline{\bm{\mathit{\Sigma}}}_{h,1}(K_{i,m}), \label{multiscale_basis_mix_a}\\
s(\pi\mathbf{q}^i_{j,\text{ms}},\pi\mathbf{v}_h)-(\textbf{div}\bm{\underline{\psi}}^i_{j,\text{ms}},\mathbf{v}_h)_{\Omega_1}&= s(\mathbf{p}^i_j,\mathbf{v}_h)\quad \forall \mathbf{v}_h\in \bm{\mathit{U}}_{h,1}(K_{i,m}),\label{multiscale_basis_mix_d}
	\end{align}	
\end{subequations}
where $\psi^i_{j,\text{msn}}=\mathbf{n}_1\cdot(\bm{\underline{\psi}}^i_{j,\text{ms}}\cdot\mathbf{n}_1)$.

Similar to Section \ref{standard cem}, to estimate $(\underline{\bm{\sigma}}_{\text{ms},1},\mathbf{u}_{\text{ms},1})$ described in Section \ref{Discrete formulation with mixed CEM-GMsFEM}, we detailed the following four steps (to clarify the mixed multiscale reduction procedure):

\textbf{Step i:} Find $(\mathcal{Q}_i^m \mathbf{g}_{12}^{mix},\mathcal{N}_i^m \mathbf{g}_{12}^{mix})$ such that for all $(\underline{\bm{\mathit{\tau}}}_h,\mathbf{v}_h)\in \underline{\bm{\mathit{\Sigma}}}_{h,1}(K_{i,m})\times \bm{\mathit{U}}_{h,1}(K_{i,m})$ (Here, $\mathcal{Q}_i^m$ is an operator mapping from $\mathbf{V}_h(\gamma)$ to $\underline{\bm{\mathit{\Sigma}}}_{h,1}(K_{i,m})$ and $\mathcal{N}_i^m$ is defined analogously),
\begin{subequations}
\label{mixed_bdc_1}
\begin{align}		
    &(\mathcal{A}(\mathcal{Q}_i^m \mathbf{g}_{12}^{mix}),\underline{\bm{\tau}}_{h})_{\Omega_1} 
    + (\textbf{div}\,\underline{\bm{\tau}}_{h},\mathcal{N}_i^m \mathbf{g}_{12}^{mix})_{\Omega_1}
    + \beta\int_{\gamma}^*(\mathcal{Q}_i^m \mathbf{g}_{12}^{mix})_n\tau_{hn}\,ds= \int_{\gamma\cap\partial K_i}^*\mathbf{g}_{12}^{mix}\cdot(\underline{\bm{\tau}}_{h}\cdot\mathbf{n}_1)\,ds,
    \label{mixed_bdc_1_a} \\
    &s(\pi(\mathcal{Q}_i^m \mathbf{g}_{12}^{mix}),\pi\mathbf{v}_h)-(\textbf{div}\,(\mathcal{Q}_i^m \mathbf{g}_{12}^{mix}),\mathbf{v}_{h})_{\Omega_1} 
    = 0.
    \label{mixed_bdc_1_b}
\end{align}	
\end{subequations}
Here $\tau_{hn}=\mathbf{n}_1\cdot(\underline{\bm{\tau}}_{h}\cdot\mathbf{n}_1)$. Then we obtain $\mathcal{Q}^m \mathbf{g}_{12}^{mix}=\sum_{i=1}^N \mathcal{Q}_i^m \mathbf{g}_{12}^{mix}$, $\mathcal{N}^m \mathbf{g}_{12}^{mix}=\sum_{i=1}^N \mathcal{N}_i^m \mathbf{g}_{12}^{mix}$.

\textbf{Step ii:} Prepare the multiscale space $U_\text{aux}$, $\Sigma_{\text{ms}}$ by solving (\ref{local spectral problem mix}) and (\ref{multiscale_basis_mix}).

\textbf{Step iii:} 
Solve $(\underline{\bm{r}}^m,\mathbf{w}^m)$ such that for all $(\underline{\bm{\tau}},\mathbf{v})\in \Sigma_\text{ms}\times U_{\text{aux}}$,
\begin{subequations}
\label{mixed_correct_bdc_1}
\begin{align}		
    &(\mathcal{A}\underline{\bm{r}}^m,\underline{\bm{\tau}})_{\Omega_1} 
    + (\textbf{div}\,\underline{\bm{\tau}},\mathbf{w}^m)_{\Omega_1} 
+\beta\int_{\gamma}^*(\underline{r}^m)_n\tau_n\,ds=\int_{\gamma}^*\mathbf{g}_{12}^{mix}\cdot(\underline{\bm{\tau}}\cdot\mathbf{n}_1)ds \nonumber \\
&\quad -\big((\mathcal{A}(\mathcal{Q}^m \mathbf{g}_{12}^{mix}),\underline{\bm{\tau}})_{\Omega_1} 
    + (\textbf{div}\,\underline{\bm{\tau}},\mathcal{N}^m \mathbf{g}_{12}^{mix})_{\Omega_1} 
+\beta\int_{\gamma}^*(\mathcal{Q}^m \mathbf{g}_{12}^{mix})_n\tau_n\,ds\big),
    \label{mixed_correct_bdc_1_a} \\
    &(\textbf{div}\,\underline{\bm{r}}^m,\mathbf{v})_{\Omega_1}  
    = s(\pi(\tilde{k}^{-1}\mathbf{f}),\mathbf{v})-(\textbf{div}\,(\mathcal{Q}^m \mathbf{g}_{12}^{mix}),\mathbf{v})_{\Omega_1}.
    \label{mixed_correct_bdc_1_b}
\end{align}	
\end{subequations}
Here $(\underline{r}^m)_n=\mathbf{n}_1\cdot(\underline{\bm{r}}^m\cdot\mathbf{n}_1),\tau_n=\mathbf{n}_1\cdot(\underline{\bm{\tau}}\cdot\mathbf{n}_1)$.

\textbf{Step iv:} Construct the numerical solution as 
\(
\underline{\bm{\sigma}}_{\text{ms},1} \approx \underline{\bm{r}}^m+\mathcal{Q}^m \mathbf{g}_{12}^{mix},\) \(\mathbf{u}_{\text{ms},1} \approx \mathbf{w}^m+\mathcal{N}^m \mathbf{g}_{12}^{mix}.
\)

\subsubsection{An iterative contact-resolving hybrid method associated with mixed CEM-GMsFEM}
\label{iterative domain decomposition method for mixed problem}
In this section, we formulate the iterative procedure associated with mixed CEM-GMsFEM as Algorithm \ref{alg:BB} and establish its convergence in Theorem \ref{convergence in mixed cem}.

\begin{algorithm}
		\caption{an iterative contact-resolving hybrid algorithm by using mixed multiscale techniques}  
		\label{alg:BB}
		\begin{itemize}
			\item[1.] Given $\mathbf{g}_{12}^0, \mathbf{g}_{21}^0\in \mathbf{V}_h\left(\gamma\right)$ arbitrarily.
			\item[2.] Recursively find $(\underline{\bm{\sigma}}^n_1,\mathbf{u}_1^n)\in\Sigma_{\text{ms}}\times U_\text{aux},(\underline{\bm{\sigma}}^n_2,\mathbf{u}_2^n)\in\underline{\bm{\Sigma}}_{h,2}\times\bm{U}_{h,2}$ by solving  the subproblems in parallel:

            (i) To obtain $(\underline{\bm{\sigma}}^n_1,\mathbf{u}_1^n)\in\Sigma_{\text{ms}}\times U_\text{aux}$, we consider the following three stages:

            \textbf{Stage a:} Find $(\mathcal{Q}_i^m \mathbf{g}_{12}^{n},\mathcal{N}_i^m \mathbf{g}_{12}^{n})$ such that for all $(\underline{\bm{\mathit{\tau}}}_h,\mathbf{v}_h)\in \underline{\bm{\mathit{\Sigma}}}_{h,1}(K_{i,m})\times \bm{\mathit{U}}_{h,1}(K_{i,m})$,
\[\begin{aligned}		
    &(\mathcal{A}(\mathcal{Q}_i^m \mathbf{g}_{12}^{n}),\underline{\bm{\tau}}_{h})_{\Omega_1} 
    + (\textbf{div}\,\underline{\bm{\tau}}_{h},\mathcal{N}_i^m \mathbf{g}_{12}^{n})_{\Omega_1}
    + \beta\int_{\gamma}^*(\mathcal{Q}_i^m \mathbf{g}_{12}^{n})_n\tau_{hn}\,ds = \int_{\gamma\cap\partial K_i}^*\mathbf{g}_{12}^{n}\cdot(\underline{\bm{\tau}}_{h}\cdot\mathbf{n}_1)\,ds,
    \label{mixed_bdc_iteration_1_a}  \\
    &s(\pi(\mathcal{Q}_i^m \mathbf{g}_{12}^{n}),\pi\mathbf{v}_h)-(\textbf{div}\,(\mathcal{Q}_i^m \mathbf{g}_{12}^{n}),\mathbf{v}_{h})_{\Omega_1} 
    = 0.
\end{aligned}	
\]
Here $\tau_{hn}=\mathbf{n}_1\cdot(\underline{\bm{\tau}}_{h}\cdot\mathbf{n}_1)$, $(\mathcal{Q}_i^m \mathbf{g}_{12}^{n})_n= \mathbf{n}_1\cdot(\mathcal{Q}_i^m \mathbf{g}_{12}^{n}\cdot\mathbf{n}_1)$. Then we obtain $\mathcal{Q}^m \mathbf{g}_{12}^{n}=\sum_{i=1}^N \mathcal{Q}_i^m \mathbf{g}_{12}^{n}$, $\mathcal{N}^m \mathbf{g}_{12}^{n}=\sum_{i=1}^N \mathcal{N}_i^m \mathbf{g}_{12}^{n}$.

\textbf{Stage b:} Solve $(\underline{\bm{r}}^m_n,\mathbf{w}^m_n)$ such that for all $(\underline{\bm{\tau}},\mathbf{v})\in \Sigma_\text{ms}\times U_{\text{aux}}$,
\[
\begin{aligned}		
    &(\mathcal{A}\underline{\bm{r}}^m_n,\underline{\bm{\tau}})_{\Omega_1} 
    + (\textbf{div}\,\underline{\bm{\tau}},\mathbf{w}^m_n)_{\Omega_1} 
+\beta\int_{\gamma}^*(\underline{r}^m_n)_n\tau_n\,ds=\int_{\gamma}^*\mathbf{g}_{12}^{n}\cdot(\underline{\bm{\tau}}\cdot\mathbf{n}_1)\,ds  \\
&\quad -\big((\mathcal{A}(\mathcal{Q}^m \mathbf{g}_{12}^{n}),\underline{\bm{\tau}})_{\Omega_1} 
    + (\textbf{div}\,\underline{\bm{\tau}},\mathcal{N}^m \mathbf{g}_{12}^{n})_{\Omega_1} 
+\beta\int_{\gamma}^*(\mathcal{Q}^m \mathbf{g}_{12}^{n})_n\tau_n\,ds\big) \\
    &(\textbf{div}\,\underline{\bm{r}}^m_n,\mathbf{v})_{\Omega_1}  
    = s(\pi(\tilde{k}^{-1}\mathbf{f}),\mathbf{v})-(\textbf{div}\,(\mathcal{Q}^m \mathbf{g}_{12}^{n}),\mathbf{v})_{\Omega_1}.
\end{aligned}	
\]
Here $(\underline{r}^m_n)_n=\mathbf{n}_1\cdot(\underline{\bm{r}}^m\cdot\mathbf{n}_1)$, $\tau_n=\mathbf{n}_1\cdot(\underline{\bm{\tau}}\cdot\mathbf{n}_1)$. 

\textbf{Stage c:} Construct the numerical solution $(\underline{\bm{\sigma}}^n_1,\mathbf{u}^n_1)$ as 
\(
\underline{\bm{\sigma}}^n_1 \approx \underline{\bm{r}}^m_n+\mathcal{Q}^m \mathbf{g}_{12}^{n},\) 
\(
 \mathbf{u}^n_1 \approx \mathbf{w}^m_n+\mathcal{N}^m \mathbf{g}_{12}^{n}.
\)

(ii) Obtain $(\underline{\bm{\sigma}}^n_2,\mathbf{u}_2^n)\in\underline{\bm{\Sigma}}_{h,2}\times\bm{U}_{h,2}$ by solving the same equation (\ref{eq:system2}) in Algorithm \ref{an iterative dd algorithm for continuous setting in mixed form}.	
			\item[3.] Update the data by applying the same transmission condition as that specified in Algorithm \ref{an iterative dd algorithm for continuous setting in mixed form}.
		\end{itemize} 
\end{algorithm} 
\begin{theorem}\label{convergence in mixed cem}
Let $(\underline{\bm{\sigma}}_h,\mathbf{u}_h) \in \underline{\bm{\mathit{\Sigma}}}_h \times \bm{\mathit{U}}_h$ be the solution of problem (\ref{penalty_mixed_discrete_formula}) and $\underline{\bm{\sigma}}_{h,i}=\underline{\bm{\sigma}}_h|_{\Omega_i},\mathbf{u}_{h,i}=\mathbf{u}_h|_{\Omega_i}$ for $i=1,2$. 
For each iteration step $n \geq 0$, let $(\underline{\bm{\sigma}}_1^n, \mathbf{u}_1^n) \in\Sigma_{\mathup{ms}}\times U_\mathup{aux}$, $(\underline{\bm{\sigma}}_2^n, \mathbf{u}_2^n) \in\underline{\bm{\mathit{\Sigma}}}_{h,2}\times\bm{\mathit{U}}_{h,2}$ be the numerical solutions generated by Algorithm \ref{alg:BB}. Then there exists a positive sequence $\{ \xi_n\}$ with $ \xi_n \to 0$ as $n \to \infty$ such that
\[
\left\|\underline{\mathbf{e}}^n_\sigma\right\|_e^2 + \left\|\mathbf{e}^n_u\right\|_{L^2}^2 \leq  \xi_n \to 0 \text{ as } n \rightarrow \infty,
\]
where the norms $\left\|\underline{\mathbf{e}}^n_\sigma\right\|_e, \left\|\mathbf{e}^n_u\right\|_{L^2} $ are defined as in Section \ref{iterative contact-resolving hybrid method for the mixed discretization 2}, with the only modification being the replacement of the space $\underline{\bm{\mathit{\Sigma}}}_{h,1}\times\bm{\mathit{U}}_{h,1}$ by $\Sigma_{\mathup{ms}}\times U_\mathup{aux}$.
\end{theorem}
\section{Analysis of the iterative contact-resolving hybrid methods}\label{Analysis}
In this section, we present convergence analyses for the algorithms proposed in Section \ref{various discretizations}. We first prove the equivalence between the split discrete subproblems and the global problem (Theorem \ref{equivalent discrete form}) in Section \ref{proof_1}. The convergence analyses of Algorithms \ref{alg:B}-\ref{alg:BB} are then presented in Sections \ref{standard analysis}-\ref{proof for mixed cem}, respectively.

\subsection{Proof of Theorem \ref{equivalent discrete form}} 
\label{proof_1}
\begin{proof}[proof of Theorem \ref{equivalent discrete form}]
It is clear that (\ref{discrete form}) is equivalent to (note that $\left\{\bm{\varphi}_p^1,\bm{\varphi}_p^2\right\}_{p \in N_o}$ is a nodal basis of $\mathbf{V}_h$):
$
a\left(\mathbf{u}_h, \bm{\varphi}_p^l\right) + \frac{1}{\delta} \int_{\Gamma_C} \left(u_{hc}\right)^{+}\varphi^l_{p,c} \, ds  = \left(\mathbf{f}, \bm{\varphi}^l_p\right) $ for all $ p\in N_o,l=1,2,$
where $\varphi^l_{p,c}=\bm{\varphi}^l_p\cdot\mathbf{n}_c$. Since $\bm{\varphi}^1_p(p) = (1,0),\bm{\varphi}^2_p(p) = (0,1)$ and $\bm{\varphi}^i_p(p') = \mathbf{0}$ for all $p'\in N_o$, $p'\neq p$ and $l=1,2,$ then $
a_1\left(\mathbf{u}_h, \bm{\varphi}^l_p\right) + a_2\left(\mathbf{u}_h, \bm{\varphi}^l_p\right) +\frac{1}{\delta} \int_{\Gamma_C} \left(u_{hc,2}\right)^{+}\varphi^l_{p,c} \, ds \, ds = \left(\mathbf{f}, \bm{\varphi}^l_p\right)_{\Omega_1} + \left(\mathbf{f}, \bm{\varphi}^l_p\right)_{\Omega_2}$ for all $p \in N_\gamma,
$ $l=1,2$. That is, we have
\begin{align}
\label{discrete form for basis test}
a_1\left(\mathbf{u}_h, \bm{\varphi}^l_p\right) - \left(\mathbf{f}, \bm{\varphi}^l_p\right)_{\Omega_1} = -\big(a_2\left(\mathbf{u}_h, \bm{\varphi}^l_p\right)+\frac{1}{\delta} \int_{\Gamma_C} \left(u_{hc,2}\right)^{+}\varphi^l_{p,c} \, ds - \left(\mathbf{f}, \bm{\varphi}^l_p\right)_{\Omega_2}\big)
\end{align}
for all $p \in N_\gamma$, and $l=1,2$.
For $\gamma$, we define $G_{12}^p$ and $G_{21}^p$ as follows:
$$
\begin{aligned}
G_{12}^{p,l} = -\frac{1}{w_p}\big(a_2\left(\mathbf{u}_h, \bm{\varphi}_p^l\right) - \left(\mathbf{f}, \bm{\varphi}_p^l\right)_{\Omega_2} + \frac{1}{\delta} \int_{\Gamma_C} \left(u_{hc,2}\right)^{+}\varphi^l_{p,c} \, ds\big), \quad
G_{21}^{p,l} = -\frac{1}{w_p}\big(a_1\left(\mathbf{u}_h, \bm{\varphi}_p^l\right) - \left(\mathbf{f}, \bm{\varphi}_p^l\right)_{\Omega_1}\big).
\end{aligned}
$$
Then we construct $\mathbf{g}_{12}^*, \mathbf{g}_{21}^* \in \mathbf{V}_h\left(\gamma\right)$, $\operatorname{meas}\left(\gamma\right) > 0$, such that
\begin{equation}
\label{choosing g star}
\begin{aligned}
\mathbf{g}_{12}^*(x) = \sum_{p \in N_\gamma,l=1,2} \left(\alpha u_{h,1}^l(p) +  G_{12}^{p,l}\right) \bm{\varphi}^l_p(x), \quad
\mathbf{g}_{21}^*(x) = \sum_{p \in N_\gamma,l=1,2} \left(\alpha u_{h,2}^l(p) + G_{21}^{p,l}\right) \bm{\varphi}^l_p(x),
\end{aligned}
\end{equation}
where we note that
$u^l_{h,1}(p)$ and $u^l_{h,2}(p)$ ($l=1,2$) are scalar values and they correspond to nodal values of the $l$-th component of the solutions $\mathbf{u}_{h,1}(x)$ and $\mathbf{u}_{h,2}(x)$, i.e.,
$\mathbf{u}_{h,k}=\sum_{p \in N_\gamma,l=1,2} u^l_{h,k}(p) \bm{\varphi}^l_p(x),$ $k=1,2.$

Denote the $m$-th component of a vector function $\mathbf{v}$ as $(\mathbf{v})_m$. Clearly $\big(\bm{\varphi}^l_p(p)\big)_m=\delta_{lm}$ for $l,m=1,2$. By (\ref{choosing g star}), we know
$(\mathbf{g}_{12}^*)_m, (\mathbf{g}_{21}^*)_m$ (for $m=1,2$) can be written as follows
\begin{subequations}
\label{choosing g star component}
\begin{align}
\big(\mathbf{g}_{12}^*(x)\big)_1 &= \sum_{p \in N_\gamma} \left(\alpha u_{h,1}^1(p) +  G_{12}^{p,1}\right) (\bm{\varphi}^1_p(x))_1, \label{choosing g star component_a} \\ 
\big(\mathbf{g}_{12}^*(x)\big)_2 &= \sum_{p \in N_\gamma} \left(\alpha u_{h,1}^2(p) +  G_{12}^{p,2}\right) (\bm{\varphi}^2_p(x))_2, \label{choosing g star component_b} \\
\big(\mathbf{g}_{21}^*(x)\big)_1 &= \sum_{p \in N_\gamma} \left(\alpha u_{h,2}^1(p) +  G_{21}^{p,1}\right) (\bm{\varphi}^1_p(x))_1,\label{choosing g star component_c} \\
\big(\mathbf{g}_{21}^*(x)\big)_2 &= \sum_{p \in N_\gamma} \left(\alpha u_{h,2}^2(p) +  G_{21}^{p,2}\right) (\bm{\varphi}^2_p(x))_2.\label{choosing g star component_d}
\end{align}
\end{subequations}
Therefore, it follows from (\ref{discrete form for basis test}) and (\ref{choosing g star component_a}) that, for all $p \in \overline{\Omega}_1 \cap N_o$, $\mathbf{u}_{h, 1} = \mathbf{u}_h |_{\Omega_1}$ satisfies
$$
\begin{aligned}
&\quad a_1\left(\mathbf{u}_{h, 1}, \bm{\varphi}^1_p\right) - \left(\mathbf{f}, \bm{\varphi}^1_p\right)_{\Omega_1} = w_pG_{12}^{p,1} \\
&= w_p\left((\mathbf{g}_{12}^*(p))_1 - \alpha \mathbf{u}^1_{h, 1}(p)\right) = w_p\left(\mathbf{g}_{12}^*(p) - \alpha \mathbf{u}_{h, 1}(p)\right)\cdot\bm{\varphi}^1_p(p)=\int_{\gamma}^*\left(\mathbf{g}_{12}^* - \alpha \mathbf{u}_{h, 1}\right) \cdot \bm{\varphi}^1_p \, ds,
\end{aligned}
$$
where $\bm{\varphi}^1_p(p)=(1,0)$ has been used in the second line.
Thus, 
$
a_1\left(\mathbf{u}_{h, 1}, \bm{\varphi}^1_p\right) + \alpha \int_{\gamma}^* \mathbf{u}_{h, 1} \cdot \bm{\varphi}^1_p \, ds = \left(\mathbf{f}, \bm{\varphi}^1_p\right)_{\Omega_1} +  \int_{\gamma}^* \mathbf{g}_{12}^* \cdot \bm{\varphi}^1_p \, ds.
$
Similarly, by (\ref{choosing g star component_b})-(\ref{choosing g star component_d}), we obtain that
$$
\begin{aligned}
&\quad a_1\left(\mathbf{u}_{h, 1}, \bm{\varphi}^2_p\right) + \alpha \int_{\gamma}^* \mathbf{u}_{h, 1} \cdot \bm{\varphi}^2_p \, ds = \left(\mathbf{f}, \bm{\varphi}^2_p\right)_{\Omega_1} +  \int_{\gamma}^* \mathbf{g}_{12}^* \cdot \bm{\varphi}^2_p \, ds,\\
&\quad a_2\left(\mathbf{u}_{h, 2}, \bm{\varphi}^l_p\right) - \left(\mathbf{f}, \bm{\varphi}^l_p\right)_{\Omega_2} + \frac{1}{\delta} \int_{\Gamma_C} \left(u_{hc, 2}\right)^{+} \cdot {\varphi}_{p,c}^l \, ds= w_p G_{21}^{p,l} \\
&= w_p\big(\mathbf{g}_{21}^*(p) - \alpha \mathbf{u}_{h, 2}(p)\big)\cdot \bm{\varphi}^l_p(p)  =  \int_{\gamma}\left(\mathbf{g}_{21}^* - \alpha \mathbf{u}_{h, 2}\right) \cdot \bm{\varphi}^l_p \, ds
\end{aligned}
$$
for $l=1,2$.
That is, we have,
\begin{equation*}
\begin{aligned}
a_1\left(\mathbf{u}_{h, 1}, \bm{\varphi}^l_p\right) + \alpha \int_{\gamma}^* \mathbf{u}_{h, 1} \cdot \bm{\varphi}^l_p \, ds &= \left(\mathbf{f}, \bm{\varphi}^l_p\right)_{\Omega_1} +  \int_{\gamma}^* \mathbf{g}_{12}^* \cdot \bm{\varphi}^l_p \, ds,\\
a_2\left(\mathbf{u}_{h, 2}, \bm{\varphi}^l_p\right) + \alpha \int_{\gamma}^* \mathbf{u}_{h, 2} \cdot \bm{\varphi}^l_p \, ds + \frac{1}{\delta} \int_{\Gamma_C} \left(u_{hc, 2}\right)^{+} \cdot {\varphi}_{p,c}^l \, ds &= \left(\mathbf{f}, \bm{\varphi}^l_p\right)_{\Omega_2} + 
 \int_{\gamma}^* \mathbf{g}_{21}^* \cdot \bm{\varphi}^l_p \, ds,
 \end{aligned}
\end{equation*}
for all $p \in N_\gamma$, and $l=1,2$.
The proof is completed.
\end{proof}

\subsection{Convergence for Algorithm \ref{alg:B}}\label{standard analysis}
This section is devoted to the proof of Theorem \ref{convergence in discrete setting}, which establishes the convergence of Algorithm \ref{alg:B}. To this end, we first present two auxiliary lemmas and then proceed to the final proof.

For the ease of notation, we denote (based on the notation in Section \ref{Nonoverlapping domain decomposition method associated with Standard FEM formulation})
$$
\begin{aligned}
 \mathbf{u}^n = \left(\mathbf{u}_i^n\right)_{i=1,2} \in \prod_{i=1}^2 \mathbf{V}_{h,i}, \quad \mathbf{e}^n_h = \left(\mathbf{e}_{h,i}^n\right)_{i=1,2} := \left(\mathbf{u}_i^n - \mathbf{u}_{h,i}\right)_{i=1,2} \in \prod_{i=1}^2 \mathbf{V}_{h,i},
\end{aligned}
$$
and the norms as follows: for $i=1,2,$
$$
\left\|\mathbf{e}_{h,i}^n\right\|_{a_i}^2=a_i\left(\mathbf{e}_{h,i}^n, \mathbf{e}_{h,i}^n\right),\quad\left\|\mathbf{e}^n_h\right\|_e^2 = \sum_{i=1}^2 \left\|\mathbf{e}_{h,i}^n\right\|_{a_i}^2 + \frac{1}{\delta}\int_{\Gamma_C} \left|\left(u_{2,c}^n\right)^{+} - \left(u_{hc,2}\right)^{+}\right|^2 \, ds,
$$
where $u_{2,c}^n=\mathbf{u}_2^n\cdot\mathbf{n}_c,u_{hc,2}=\mathbf{u}_{h,2}\cdot\mathbf{n}_c$.		
Then we clearly have that
\begin{align}
\label{energy_estimate_1}
a_1\left(\mathbf{e}_{h,1}^n, \mathbf{v}_{h,1}\right) + \alpha \int_{\gamma}^* \mathbf{e}_{h,1}^n \cdot \mathbf{v}_{h,1} \, ds =  \int_{\gamma}^* \mathbf{g}_{12}^n \cdot \mathbf{v}_{h,1} \, ds \quad \forall \mathbf{v}_{h,1} \in \mathbf{V}_{h,1}.
\end{align}
\begin{equation}
\label{energy_estimate_2}
\begin{aligned}
&\quad a_2\left(\mathbf{e}_{h,2}^n, \mathbf{v}_{h,2}\right) + \frac{1}{\delta} \int_{\Gamma_C} \left( \left(u_{2,c}^n\right)^{+} - \left(u_{hc,2}\right)^{+}\right) v_{hc,2} \, ds + \alpha \int_{\gamma}^* \mathbf{e}_{h,2}^n \cdot \mathbf{v}_{h,2} \, ds \\
& =  \int_{\gamma}^* \mathbf{g}_{21}^n \cdot \mathbf{v}_{h,2} \, ds \quad \forall \mathbf{v}_{h,2} \in \mathbf{V}_{h,2}.
\end{aligned}
\end{equation}
\begin{align}
\label{g value at p}
\mathbf{g}_{12}^{n+1}(p) = 2 \alpha \mathbf{e}_{h,2}^n(p) - \mathbf{g}_{21}^n(p), \quad \mathbf{g}_{21}^{n+1}(p) = 2 \alpha \mathbf{e}_{h,1}^n(p) - \mathbf{g}_{12}^n(p)
\end{align}
on $p \in N_\gamma$. Here we have used $\mathbf{g}_{ij}^n$ to replace $\mathbf{g}_{ij}^n-\mathbf{g}_{ij}^{*}$( $i,j=1,2$, $i\neq j$) just for notation simplicity.
Then we have Lemma \ref{ah1_ah2} below.
\begin{lemma}
\label{ah1_ah2}
We have the following identities:
$$
\begin{aligned}
\label{equality for a}
a_1\left(\mathbf{e}_{h,1}^n, \mathbf{e}_{h,1}^n\right) &=  \int_{\gamma}^*\left(\mathbf{g}_{12}^n - \alpha \mathbf{e}_{h,1}^n\right) \cdot \mathbf{e}_{h,1}^n \, ds, \\
a_2\left(\mathbf{e}_{h,2}^n, \mathbf{e}_{h,2}^n\right) &= -\frac{1}{\delta} \int_{\Gamma_C} \left[\left(u_{2,c}^n\right)^{+} - \left(u_{hc,2}\right)^{+}\right] (u_{2,c}^n - u_{hc,2})\, ds +  \int_{\gamma}^*\left(\mathbf{g}_{21}^n - \alpha \mathbf{e}_{h,2}^n\right) \cdot \mathbf{e}_{h,2}^n \, ds.
\end{aligned}
$$
\end{lemma}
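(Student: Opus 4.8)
The plan is to derive both identities directly from the error equations \eqref{energy_estimate_1} and \eqref{energy_estimate_2}, which already encode the residual structure of the two subproblems, by taking the test function to be the error itself. Since $\mathbf{e}_{h,i}^n \in \mathbf{V}_{h,i}$ is an admissible test function on each subdomain, this is the natural Galerkin-type testing that converts a linear variational identity into an energy identity. The only algebraic ingredient needed is the bilinearity of the discrete interface pairing $\int_{\gamma_{ij}}^*$ defined in \eqref{Newton-Cotes-integral}.

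For the first identity, I would set $\mathbf{v}_{h,1} = \mathbf{e}_{h,1}^n$ in \eqref{energy_estimate_1}, which gives $a_1^h(\mathbf{e}_{h,1}^n, \mathbf{e}_{h,1}^n) + \alpha_{12}\int_{\gamma_{12}}^* \mathbf{e}_{h,1}^n \cdot \mathbf{e}_{h,1}^n\,ds = \int_{\gamma_{12}}^* \mathbf{g}_{12}^n \cdot \mathbf{e}_{h,1}^n\,ds$. Moving the $\alpha_{12}$ boundary term to the right-hand side and using bilinearity of $\int_{\gamma_{12}}^*$ to combine the two interface integrands into $(\mathbf{g}_{12}^n - \alpha_{12}\mathbf{e}_{h,1}^n)\cdot \mathbf{e}_{h,1}^n$ yields the claimed expression immediately.

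For the second identity, I would set $\mathbf{v}_{h,2} = \mathbf{e}_{h,2}^n$ in \eqref{energy_estimate_2}. The one point deserving a short remark is the penalty term: with this choice the test trace equals $v_{hc,2} = \mathbf{e}_{h,2}^n \cdot \bm{n}_c = (\mathbf{u}_2^n - \mathbf{u}_{h,2})\cdot\bm{n}_c = u_{2,c}^n - u_{hc,2}$, so the factor multiplying the difference of positive parts is precisely $u_{2,c}^n - u_{hc,2}$. Substituting this, and then moving the penalty contribution (with a minus sign) and the $\alpha_{21}$ interface term to the right-hand side exactly as in the first case, produces the stated identity.

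I do not expect any genuine obstacle; the lemma is purely algebraic and follows from admissible testing together with the linearity of the map $\mathbf{u}\mapsto u_c = \mathbf{u}\cdot\bm{n}_c$. The only thing to keep careful track of is that the interface integral $\int_{\gamma_{ij}}^*$ is the Newton--Cotes quadrature bilinear form of \eqref{Newton-Cotes-integral}, so that the recombination of integrands relies solely on its bilinearity on $\mathbf{V}_{h,i}$ and not on any identification with the exact $L^2$ pairing.
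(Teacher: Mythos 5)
Your proof is correct and matches the paper's (implicit) argument exactly: the paper states Lemma \ref{ah1_ah2} without a written proof precisely because it follows from the error equations \eqref{energy_estimate_1}--\eqref{energy_estimate_2} by taking $\mathbf{v}_{h,i}=\mathbf{e}_{h,i}^n$ and rearranging, which is what you do. Your added care about the trace identity $v_{hc,2}=u_{2,c}^n-u_{hc,2}$ and about using only the bilinearity of the quadrature pairing $\int_{\gamma_{ij}}^*$ is sound and consistent with the paper's setting.
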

\begin{lemma}
\label{equality for g star and a}
There holds the following identity:
$$
\left\|\underline{\mathbf{g}}^{n+1}\right\|_*^2 = \left\|\underline{\mathbf{g}}^n\right\|_*^2 - 4\left[\sum_{i=1}^2 a_i\left(\mathbf{e}_{h,i}^n, \mathbf{e}_{h,i}^n\right) + \frac{1}{\delta} \int_{\Gamma_C} \left[\left(u_{2,c}^n\right)^{+} - \left(u_{hc,2}\right)^{+} \right](u_{2,c}^n - u_{hc,2}) \, ds\right],
$$
where
$
\left\|\underline{\mathbf{g}}^k\right\|_*^2 = \frac{1}{\alpha} \int_{\gamma}^* \left|\mathbf{g}_{12}^k\right|^2+\left|\mathbf{g}_{21}^k\right|^2 \, ds, \quad \underline{\mathbf{g}}^k = \left(\mathbf{g}_{12}^k, \mathbf{g}_{21}^k\right)$, $k=n,n+1.$
\end{lemma}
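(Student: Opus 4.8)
The plan is to derive the identity by a direct algebraic expansion of the nodal update rule (\ref{g value at p}), reassembling terms with the aid of Lemma \ref{ah1_ah2}. The structural facts I will exploit throughout are that $\gamma_{12}=\gamma_{21}$, that $\alpha_{12}=\alpha_{21}$, and that the quadrature nodes of $\int^*$ coincide with the nodal points $N_o$ of the basis (as noted after (\ref{Newton-Cotes-integral})), so that the pointwise relations (\ref{g value at p}) may be inserted directly into the starred integrals.

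First I would square both update relations in (\ref{g value at p}) at each node $p\in\gamma_{12}\cap N_o$. Writing $\mathbf{g}_{12}^{n+1}=2\alpha_{12}\mathbf{e}_{h,2}^n-\mathbf{g}_{21}^n$ and expanding gives $|\mathbf{g}_{12}^{n+1}|^2=4\alpha_{12}^2|\mathbf{e}_{h,2}^n|^2-4\alpha_{12}\,\mathbf{e}_{h,2}^n\cdot\mathbf{g}_{21}^n+|\mathbf{g}_{21}^n|^2$, and symmetrically for $|\mathbf{g}_{21}^{n+1}|^2$ in terms of $\mathbf{e}_{h,1}^n$ and $\mathbf{g}_{12}^n$. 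Dividing the first by $\alpha_{12}$ and the second by $\alpha_{21}$, then summing under $\int^*$, the leftover squared terms $\tfrac{1}{\alpha_{12}}\int_{\gamma_{12}}^*|\mathbf{g}_{21}^n|^2\,ds$ and $\tfrac{1}{\alpha_{21}}\int_{\gamma_{21}}^*|\mathbf{g}_{12}^n|^2\,ds$ recombine, using $\gamma_{12}=\gamma_{21}$ and $\alpha_{12}=\alpha_{21}$, into exactly $\|\underline{\mathbf{g}}^n\|_*^2$. At this stage I obtain
\[
\|\underline{\mathbf{g}}^{n+1}\|_*^2=\|\underline{\mathbf{g}}^n\|_*^2+4\alpha_{12}\int_{\gamma_{12}}^*|\mathbf{e}_{h,2}^n|^2\,ds+4\alpha_{21}\int_{\gamma_{21}}^*|\mathbf{e}_{h,1}^n|^2\,ds-4\int_{\gamma_{12}}^*\mathbf{e}_{h,2}^n\cdot\mathbf{g}_{21}^n\,ds-4\int_{\gamma_{21}}^*\mathbf{e}_{h,1}^n\cdot\mathbf{g}_{12}^n\,ds.
\]

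Next I would eliminate the two cross terms using Lemma \ref{ah1_ah2}, rewritten as $\int_{\gamma_{12}}^*\mathbf{g}_{12}^n\cdot\mathbf{e}_{h,1}^n\,ds=a_1^h(\mathbf{e}_{h,1}^n,\mathbf{e}_{h,1}^n)+\alpha_{12}\int_{\gamma_{12}}^*|\mathbf{e}_{h,1}^n|^2\,ds$ and $\int_{\gamma_{21}}^*\mathbf{g}_{21}^n\cdot\mathbf{e}_{h,2}^n\,ds=a_2^h(\mathbf{e}_{h,2}^n,\mathbf{e}_{h,2}^n)+\tfrac{1}{\delta}\int_{\Gamma_C}[(u_{2,c}^n)^+-(u_{hc,2})^+](u_{2,c}^n-u_{hc,2})\,ds+\alpha_{21}\int_{\gamma_{21}}^*|\mathbf{e}_{h,2}^n|^2\,ds$. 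Substituting these produces the two bilinear forms and the contact-penalty integral with the desired factor $-4$, together with boundary quadratic terms $-4\alpha_{21}\int_{\gamma_{21}}^*|\mathbf{e}_{h,2}^n|^2\,ds$ and $-4\alpha_{12}\int_{\gamma_{12}}^*|\mathbf{e}_{h,1}^n|^2\,ds$.

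The final step is the cancellation: since $\alpha_{12}=\alpha_{21}$, the boundary quadratic terms just produced exactly annihilate the explicit $+4\alpha_{12}\int_{\gamma_{12}}^*|\mathbf{e}_{h,2}^n|^2\,ds$ and $+4\alpha_{21}\int_{\gamma_{21}}^*|\mathbf{e}_{h,1}^n|^2\,ds$ contributions from the first step, leaving precisely the claimed identity. I expect the only real obstacle to be bookkeeping: keeping straight which interface ($\gamma_{12}$ versus $\gamma_{21}$) and which coefficient each term carries, since the whole argument hinges on the two coincidences $\gamma_{12}=\gamma_{21}$ and $\alpha_{12}=\alpha_{21}$ both to reassemble $\|\underline{\mathbf{g}}^n\|_*^2$ and to cancel the surface quadratic terms. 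No analytical difficulty arises beyond this; the passage from the pointwise update to the starred integrals is justified by the node-coincidence of the composite Newton--Cotes quadrature.
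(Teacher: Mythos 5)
Your proposal is correct and follows essentially the same route as the paper: substitute the update rule \eqref{g value at p} into the starred integrals, expand the squares, reassemble $\|\underline{\mathbf{g}}^n\|_*^2$ via $\gamma_{12}=\gamma_{21}$ and $\alpha_{12}=\alpha_{21}$, and invoke Lemma \ref{ah1_ah2}. The only cosmetic difference is bookkeeping: the paper keeps the expansion grouped as $(\mathbf{g}-\alpha\mathbf{e})\cdot\mathbf{e}$ so Lemma \ref{ah1_ah2} applies verbatim with no leftover terms, whereas you expand fully and cancel the surface quadratic terms at the end — the two computations are identical term by term.
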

\begin{proof}
By combining the definition of the norm $\left\|\cdot\right\|_*$ and Lemma \ref{ah1_ah2}, we have
\begin{align*}
\left\|\underline{\mathbf{g}}^{n+1}\right\|_*^2 &= \frac{1}{\alpha} \int_{\gamma}^* \left|\mathbf{g}_{12}^{n+1}\right|^2 \, ds + \frac{1}{\alpha} \int_{\gamma}^* \left|\mathbf{g}_{21}^{n+1}\right|^2 \, ds = \frac{1}{\alpha} \int_{\gamma}^* \left|2 \alpha \mathbf{e}_{h,2}^n - \mathbf{g}_{21}^n\right|^2 \, ds + \frac{1}{\alpha} \int_{\gamma}^* \left|2 \alpha \mathbf{e}_{h,1}^n - \mathbf{g}_{12}^n\right|^2 \, ds\\
&= \frac{1}{\alpha} \int_{\gamma}^* \left|\mathbf{g}_{21}^n\right|^2 \, ds - 4 \int_{\gamma}^* \left(\mathbf{g}_{21}^n - \alpha \mathbf{e}_{h,2}^n\right) \cdot \mathbf{e}_{h,2}^n \, ds + \frac{1}{\alpha} \int_{\gamma}^* \left|\mathbf{g}_{12}^n\right|^2 \, ds - 4 \int_{\gamma}^* \left(\mathbf{g}_{12}^n - \alpha \mathbf{e}_{h,1}^n\right) \cdot \mathbf{e}_{h,1}^n \, ds  \\
&= \left\|\underline{\mathbf{g}}^n\right\|_*^2 - 4 a_1\left(\mathbf{e}_{h,1}^n, \mathbf{e}_{h,1}^n\right) - 4 \left[a_2\left(\mathbf{e}_{h,2}^n, \mathbf{e}_{h,2}^n\right) + \frac{1}{\delta} \int_{\Gamma_C} \left[\left(u_{2,c}^n\right)^{+} - \left(u_{hc,2}\right)^{+} \right](u_{2,c}^n - u_{hc,2}) \, ds\right] \\
&= \left\|\underline{\mathbf{g}}^n\right\|_*^2 - 4 \left[\sum_{i=1}^2 a_i\left(\mathbf{e}_{h,i}^n, \mathbf{e}_{h,i}^n\right) + \frac{1}{\delta} \int_{\Gamma_C} \left[\left(u_{2,c}^n\right)^{+} - \left(u_{hc,2}\right)^{+} \right](u_{2,c}^n - u_{hc,2}) \, ds\right].
\end{align*}
Then the desired equality is proved.
\end{proof}
Note that by (\ref{an_important_inequality}), we have
\begin{equation}
\label{convergence inequality}
\frac{1}{\delta} \int_{\Gamma_C} \left[\left(u_{2,c}^n\right)^{+} - \left(u_{hc,2}\right)^{+}\right] (u_{2,c}^n - u_{hc,2}) \, ds \geq  \frac{1}{\delta} \int_{\Gamma_C} \left[\left(u_{2,c}^n\right)^{+} - \left(u_{hc,2}\right)^{+}\right]^2 \, ds \geq 0.
\end{equation}
Next we give the proof of the final convergence result (i.e. Theorem \ref{convergence in discrete setting}).
\begin{proof}[proof of Theorem \ref{convergence in discrete setting}]
By using Lemma \ref{equality for g star and a}, we have that for any positive integer $M$:
$$
\begin{aligned}
\sum_{n=0}^M \sum_{i=1}^2 a_i\left(\mathbf{e}_{h,i}^n, \mathbf{e}_{h,i}^n\right) = & \sum_{n=0}^M \left[\frac{1}{4}\left(\left\|\underline{\mathbf{g}}^n\right\|_*^2 - \left\|\underline{\mathbf{g}}^{n+1}\right\|_*^2\right) - \frac{1}{\delta} \int_{\Gamma_C} \left[\left(u_{2,c}^n\right)^{+} - \left(u_{hc,2}\right)^{+} \right](u_{2,c}^n - u_{hc,2})  \, ds\right] \\
= & \frac{1}{4}\left(\|\underline{\mathbf{g}}^0\|_*^2 - \left\|\underline{\mathbf{g}}^{M+1}\right\|_*^2\right) - \sum_{n=0}^M \frac{1}{\delta} \int_{\Gamma_C} \left[\left(u_{2,c}^n\right)^{+} - \left(u_{hc,2}\right)^{+} \right](u_{2,c}^n - u_{hc,2})  \, ds.
\end{aligned}
$$
That is,
\begin{equation}
\begin{aligned}
\label{aaaaa}
&\quad \sum_{n=0}^M \left[\sum_{i=1}^2 a_i\left(\mathbf{e}_{h,i}^n, \mathbf{e}_{h,i}^n\right) + \frac{1}{\delta} \int_{\Gamma_C} \left[\left(u_{2,c}^n\right)^{+} - \left(u_{hc,2}\right)^{+} \right](u_{2,c}^n - u_{hc,2}) \, ds\right] = \frac{1}{4}\left(\|\underline{\mathbf{g}}^0\|_*^2 - \left\|\underline{\mathbf{g}}^{M+1}\right\|_*^2\right)\leq \frac{1}{4}\|\underline{\mathbf{g}}^0\|_*^2.
\end{aligned}
\end{equation}
In terms of (\ref{aaaaa}) and (\ref{convergence inequality}), we have
\begin{equation}
\label{limit}
\sum_{i=1}^2 a_i\left(\mathbf{e}_{h,i}^n, \mathbf{e}_{h,i}^n\right) + \frac{1}{\delta} \int_{\Gamma_C} \left[\left(u_{2,c}^n\right)^{+} - \left(u_{hc,2}\right)^{+} \right]^2 \, ds \rightarrow 0 \text{ as } n \rightarrow \infty. 
\end{equation}
This completes the proof.
\end{proof}
\subsection{Convergence for Algorithm \ref{an iterative dd algorithm for continuous setting in mixed form}}
\label{proof for mixed}
In this section, we prove Theorem \ref{convergence for mixed}, which confirms the convergence of Algorithm \ref{an iterative dd algorithm for continuous setting in mixed form}.
\begin{proof}[proof of Theorem \ref{convergence for mixed}]
Using the error notation $\underline{\mathbf{e}}^n_\sigma$ and $\mathbf{e}^n_u$ from Section \ref{iterative contact-resolving hybrid method for the mixed discretization 2}, we subtract equations (\ref{penalty_mixed_discrete_formula_seperate_1})-(\ref{penalty_mixed_discrete_formula_seperate_2}) from (\ref{eq:system})-(\ref{eq:system2}) to obtain
\begin{subequations}
\label{eq:main_system}
\begin{align}
    &\inner{\mathcal{A} \ube{\sigma}{1}}{\btau_{h,1}}_{\Omega_1} + \inner{\textbf{div} \btau_{h,1}}{\bedis{u}{1}} 
      + \beta \int_{\gamma}^* \left(e^n_{\sigma,1}\right)_{n} \tau_{h n,1} \, ds = \int_{\Gamma_C}\mathbf{g}_{12}^n \cdot 
      \left(\btau_{h,1} \cdot \mathbf{n}_1\right) ds, 
      && \forall \btau_{h,1} \in \underline{\bm{\mathit{\Sigma}}}_{h,1}, \label{eq:3.2a} \\
    &\inner{\textbf{div} \ube{\sigma}{1}}{\mathbf{v}_{h,1}}_{\Omega_1} = 0, 
      && \forall \mathbf{v}_{h,1}\in \bm{\mathit{U}}_{h,1}, \label{eq:3.2b} \\[6pt]
    &\inner{\mathcal{A} \ube{\sigma}{2}}{\btau_{h,2}}_{\Omega_2} + \inner{\textbf{div} \btau_{h,2}}{\bedis{u}{2}} 
      + \beta \int_{\gamma}^* \left(e^n_{\sigma,2}\right)_{n} \tau_{h n,2} \, ds \notag \\
    &\quad + \frac{1}{\delta} \int_{\Gamma_C} \left[ \left(\sigma_{2,c}^n\right)^+ 
      - \left(\sigma_{h c,2}\right)^+ \right] \cdot \tau_{h c,2} \, ds = \int_{\gamma}^* \mathbf{g}_{21}^n \cdot 
      \left(\btau_{h,2} \cdot \mathbf{n}_2\right) ds, 
      && \forall \btau_{h,2} \in \underline{\bm{\mathit{\Sigma}}}_{h,2}, \label{eq:3.2c} \\
    &\inner{\textbf{div} \ube{\sigma}{2}}{\mathbf{v}_{h,2}}_{\Omega_2} = 0, 
      && \forall \mathbf{v}_{h,2} \in \bm{\mathit{U}}_{h,2}, \label{eq:3.2d} \\[6pt]
    &\mathbf{g}_{12}^{n+1}(p) = -2\beta \, \ube{\sigma}{2}(p) \cdot \mathbf{n}_2 + \mathbf{g}_{21}^n(p), \quad \mathbf{g}_{21}^{n+1}(p) = -2\beta \, \ube{\sigma}{1}(p) \cdot \mathbf{n}_1 + \mathbf{g}_{12}^n(p). \label{eq:3.2f}
\end{align}
\end{subequations}
for $p\in N_\gamma$, where $(e^n_{\sigma,i})_n=\mathbf{n}_i\cdot(\ube{\sigma}{i}\cdot\mathbf{n}_i)$ ($i=1,2$).
For notational simplicity, we have used \(\mathbf{g}_{12}^n\) and \(\mathbf{g}_{21}^n\) to represent 
\(\mathbf{g}_{12}^n - \mathbf{g}_{12}^{mix}\) and \(\mathbf{g}_{21}^n - \mathbf{g}_{21}^{mix}\), respectively.
Let the test functions \(\btau_{h,1} = \ube{\sigma}{1}\), \(\mathbf{v}_{h,1} = \bedis{u}{1}\), 
\(\btau_{h,2} = \ube{\sigma}{2}\), and \(\mathbf{v}_{h,2} = \bedis{u}{2}\) in 
equations \eqref{eq:3.2a}--\eqref{eq:3.2d}. We then obtain:
\begin{subequations}
\label{eq:energy_relations}
\begin{align}
    \inner{\mathcal{A} \ube{\sigma}{1}}{\ube{\sigma}{1}}_{\Omega_1} 
    &= \int_{\gamma}^* \left[\mathbf{g}_{12}^n - \beta \left(e^n_{\sigma,1}\right)_n\mathbf{n}_1 \right] 
       \cdot \left(\ube{\sigma}{1} \cdot \mathbf{n}_1\right) ds, \label{eq:energy1} \\
    \inner{\mathcal{A} \ube{\sigma}{2}}{\ube{\sigma}{2}}_{\Omega_2} 
    &= \int_{\gamma}^* \left[\mathbf{g}_{21}^n - \beta \left(e^n_{\sigma,2}\right)_{n}\mathbf{n}_2 \right] 
       \cdot \left(\ube{\sigma}{2} \cdot \mathbf{n}_2\right) ds - \frac{1}{\delta} \int_{\Gamma_C} \left[ \left(\sigma_{2,c}^n\right)^+ 
      - \left(\sigma_{h c,2}\right)^+ \right] \cdot 
      \left(\sigma_{2,c}^n - \sigma_{h c,2}\right) ds. \label{eq:energy2}
\end{align}
\end{subequations}
Then, by \eqref{eq:energy_relations}, \eqref{eq:3.2f} and the definition of $\left\|\underline{\mathbf{e}}^n_\sigma\right\|_{\mathcal{A}}$ from Section \ref{iterative contact-resolving hybrid method for the mixed discretization 2}, we can obtain:
\begin{align}
    &\quad\frac{1}{\beta} \int_{\gamma}^* \left( \left|\mathbf{g}_{12}^{n+1} \right|^2 
      + \left| \mathbf{g}_{21}^{n+1} \right|^2 \right) ds = \frac{1}{\beta} \int_{\gamma}^* \left| -2\beta \, \ube{\sigma}{2} \cdot \mathbf{n}_2 
      +\mathbf{g}_{21}^n \right|^2 ds 
      + \frac{1}{\beta} \int_{\gamma}^* \left| -2\beta \, \ube{\sigma}{1} \cdot \mathbf{n}_1 
      +\mathbf{g}_{12}^n \right|^2 ds \notag \\
    &= \frac{1}{\beta} \left[ \int_{\gamma}^* \left|\mathbf{g}_{21}^n \right|^2 ds 
      + \int_{\gamma}^* \left|\mathbf{g}_{12}^n \right|^2 ds \right] - 4 \left[ \left\|\underline{\mathbf{e}}^n_\sigma\right\|_{\mathcal{A}}^2
      + \frac{1}{\delta} \int_{\Gamma_C} \left[ \left( \sigma_{2,c}^n \right)^{+} 
      - \left( \sigma_{h c,2} \right)^{+} \right] \cdot 
      \left( \sigma_{2,c}^n - \sigma_{h c,2} \right) ds \right]. \label{eq:key_identity}
\end{align}
Note that by equation (\ref{an_important_inequality}), we clearly have
\begin{equation}
\label{eq:positivity}
\begin{aligned}
\frac{1}{\delta} \int_{\Gamma_C} \left[ \left( \sigma_{2,c}^n \right)^{+} - \left( \sigma_{h c,2} \right)^{+} \right] \cdot \left( \sigma_{2,c}^n - \sigma_{h c,2} \right) \, ds  \geqslant \frac{1}{\delta} \int_{\Gamma_C} \left| \sigma_{2,c}^n - \sigma_{h c,2} \right|^2 \, ds \geqslant 0.
\end{aligned}
\end{equation}
Next, by combining (\ref{eq:key_identity}) and applying the similar steps to the proof of Theorem \ref{convergence in discrete setting}, we obtain for any positive integer $M$ that
\begin{align}
\label{eq:sum_identity}
\begin{aligned}
\sum_{n=0}^M \left\|\underline{\mathbf{e}}^n_\sigma\right\|_{\mathcal{A}}^2
&= \frac{1}{4} \Bigg( \frac{1}{\beta} \int_{\gamma}^* \left( \left| \mathbf{g}_{21}^0 \right|^2 + \left| \mathbf{g}_{12}^0 \right|^2 \right) \, ds - \frac{1}{\beta} \int_{\gamma}^* \left( \left| \mathbf{g}_{12}^{M+1} \right|^2 + \left| \mathbf{g}_{21}^{M+1} \right|^2 \right) \, ds \Bigg) \\
&\qquad - \sum_{n=0}^M \frac{1}{\delta} \int_{\Gamma_C} \left[ \left( \sigma_{2,c}^n \right)^{+} - \left( \sigma_{h c,2} \right)^{+} \right] \cdot \left( \sigma_{2,c}^n - \sigma_{h c,2} \right) \, ds.
\end{aligned}
\end{align}
Then we obtain
$\sum_{n=0}^M \left( \left\|\underline{\mathbf{e}}^n_\sigma\right\|_{\mathcal{A}}^2 + \frac{1}{\delta} \int_{\Gamma_C} \left[ \left( \sigma_{2,c}^n \right)^{+} - \left( \sigma_{h c,2} \right)^{+} \right] \cdot \left( \sigma_{2,c}^n - \sigma_{h c,2} \right) \, ds \right) \leqslant \frac{1}{4\beta} \int_{\gamma}^* \left( \left| \mathbf{g}_{21}^0 \right|^2 + \left| \mathbf{g}_{12}^0 \right|^2 \right) \, ds < \infty.
$
Thus, combining equation (\ref{eq:positivity}) and the definitions of $\left\|\underline{\mathbf{e}}^n_\sigma\right\|_e, \left\|\underline{\mathbf{e}}^n_\sigma\right\|_{\mathcal{A}}$ in (\ref{important_norms}), we have
\begin{equation}
\label{eq:stress_convergence}
\begin{aligned}
\left\|\underline{\mathbf{e}}^n_\sigma\right\|_e^2 &\leqslant \left\|\underline{\mathbf{e}}^n_\sigma\right\|_{\mathcal{A}}^2 + \frac{1}{\delta} \int_{\Gamma_C} \left[ \left( \sigma_{2,c}^n \right)^{+} - \left( \sigma_{h c,2} \right)^{+} \right] \cdot \left( \sigma_{2,c}^n - \sigma_{h c,2} \right) \, ds \rightarrow 0 \quad \text{as } n \rightarrow \infty.
\end{aligned}
\end{equation}

Next, we estimate the displacement field errors using techniques that differ significantly from the analysis of Theorem \ref{convergence in discrete setting}. Following the approach in \cite[Theorem 4.4]{zhong2023spectral}, which employs discrete $H^1$ stability arguments, we select appropriate test functions $\btau_{h,1}$ and $\btau_{h,2}$ in (\ref{eq:3.2a}) and (\ref{eq:3.2c}) to derive a discrete $H^1$-norm estimate for $\bedis{u}{1}$ and $\bedis{u}{2}$. 
Then combining the discrete Sobolev embedding inequalities from \cite[Remark 4.5]{zhong2023spectral} and the definitions of $\left\|\mathbf{e}^n_u\right\|_{L^2},\left\|\underline{\mathbf{e}}^n_\sigma\right\|_{\mathcal{A}}$, we obtain
\begin{equation*}
\label{eq:displacement_bound}
\left\|\mathbf{e}^n_u\right\|_{L^2}^2 \leqslant C \left\|\underline{\mathbf{e}}^n_\sigma\right\|_{\mathcal{A}}^2\leq C \left\|\underline{\mathbf{e}}^n_\sigma\right\|_e^2 \rightarrow 0 \quad \text{as } n \rightarrow \infty,
\end{equation*}
where $C>0$ is independent of material parameters and mesh size. Therefore, we finally obtain the convergence result:
$
\left\|\mathbf{e}^n_u\right\|_{L^2}^2 + \left\|\underline{\mathbf{e}}^n_\sigma\right\|_e^2 \rightarrow 0
$ as $n \rightarrow \infty$.
\end{proof}

\subsection{Convergence for Algorithm \ref{alg:A}}
\label{proof for standard cem}
In this section, we prove Theorem \ref{convergence in standard cem}, which confirms the convergence of Algorithm \ref{alg:A}.
\begin{proof}[proof of Theorem \ref{convergence in standard cem}]
Combining Stages b–c in Algorithm \ref{alg:A} with the subproblems in Theorem \ref{equivalent discrete form},
\begin{align}\label{subtracting_for_standard_cem}
 a_1\bigl(\mathbf{u}_1^n - \mathbf{u}_{h,1}, \mathbf{v}\bigr) 
   + \alpha \int_\gamma^* \bigl(\mathbf{u}_1^n - \mathbf{u}_{h,1}\bigr) \cdot \mathbf{v} \, ds 
   = \int_\gamma^* (\mathbf{g}_{12}^n - \mathbf{g}_{12}^*) \cdot \mathbf{v} \, ds,\quad \forall \mathbf{v}\in\mathbf{V}_{\text{ms}}.
\end{align}
We observe that (\ref{subtracting_for_standard_cem}) has a form similar to (\ref{energy_estimate_1}), with only replacement of the spaces. Moreover, since the formulation in $\Omega_2$ is standard, we have the same one as (\ref{energy_estimate_2}). Denote the sequence $\{\epsilon_n\}$ with
\[
 \epsilon_n = \left(\sum_{i=1}^2 \left\|\mathbf{u}_i^n - \mathbf{u}_{h,i}\right\|_{a_i}^2 
 + \frac{1}{\delta} \int_{\Gamma_C} \bigl[(u_{2,c}^n)^+ - (u_{hc,2})^+\bigr] \bigl(u_{2,c}^n - u_{h,2}\bigr) \, ds\right)^{\frac{1}{2}}.
\] 
Following the similar steps to the proof of Theorem \ref{convergence in discrete setting}, we conclude that \(( \sum_{i=1}^2 \left\|\mathbf{u}_i^n - \mathbf{u}_{h,i}\right\|_{a_i}^2 \) \(+ \frac{1}{\delta} \int_{\Gamma_C} \bigl[(u_{2,c}^n)^+ - (u_{hc,2})^+\bigr]^2)^{\frac{1}{2}} \leq \epsilon_n \to 0\) as \(n \to \infty\), thereby establishing the desired result.
\end{proof}
\subsection{Convergence for Algorithm \ref{alg:BB}}
\label{proof for mixed cem}
In this section, we prove Theorem \ref{convergence in mixed cem}, which presents the convergence of Algorithm \ref{alg:BB}.
\begin{proof}[proof of Theorem \ref{convergence in mixed cem}]
Combining the procedures in Stages b–c in Algorithm \ref{alg:BB} with the reference problem (\ref{penalty_mixed_discrete_formula_seperate_1}), we can obtain formulations of a form analogous to (\ref{eq:3.2a})–(\ref{eq:3.2b}), with the spaces $\underline{\bm{\mathit{\Sigma}}}_{h,1},\bm{\mathit{U}}_{h,1}$ replaced by $\Sigma_\text{ms}, U_{\text{aux}}$ for the stress and displacement. Next, following the similar techniques in Section \ref{proof for mixed}, we have (using the notation defined in Section \ref{iterative contact-resolving hybrid method for the mixed discretization 2}):
\(
\left\|\underline{\mathbf{e}}^n_\sigma\right\|_e^2 \leq \left\|\underline{\mathbf{e}}^n_\sigma\right\|_{\mathcal{A}}^2 + \frac{1}{\delta}\int_{\Gamma_C} ((\sigma^n_{2,c})^+ - (\sigma_{hc,2})^+)(\sigma^n_{2,c} - \sigma_{hc,2}) \, ds
\).
Besides, there exists $C>0$ independent of material parameters and mesh size such that $\left\|\mathbf{e}^n_u\right\|_{L^2}^2\leq C \left\|\underline{\mathbf{e}}^n_\sigma\right\|_e^2$. Choosing a sequence $\{\xi_n\}$ with
$$\xi_n=\left(1+C\right)\left(\left\|\underline{\mathbf{e}}^n_\sigma\right\|_{\mathcal{A}}^2 + \frac{1}{\delta}\int_{\Gamma_C} \left((\sigma^n_{2,c})^+ - (\sigma_{hc,2})^+\right)\left(\sigma^n_{2,c} - \sigma_{hc,2}\right) \, ds\right).$$
 Then similar to the proof of Theorem \ref{convergence for mixed}, we have \( \xi_n \to 0\) as \(n \to \infty\). This completes the proof.
\end{proof}

\section{Numerical experiments}\label{Numerical experiments}
In this section, we conduct numerical experiments to demonstrate the performance of the proposed iterative contact-resolving hybrid methods. The demonstration spans from the use of mixed FEM across both subdomains to a combination of mixed multiscale methods in the larger domain and mixed FEM in the smaller domain. All computations were performed using MATLAB 2021a on a Lenovo ThinkCentre M80q Gen 4 desktop computer equipped with an Intel Core~i9-13900T processor and 32~GB of RAM. 

The computational domain is set as $\Omega = [0,1]^2$, with the subdomains defined by $\Omega_1 = [0,1-H] \times [0,1]$ and $\Omega_2 = [1-H,1] \times [0,1]$. We give two test models in $\Omega_1$ as shown in Figure \ref{fig:heterogeneity_pattern}, where the Young's modulus $E$ is taken as a piecewise constant with heterogeneity pattern. For clearer, we take the yellow part as $E_1$ and the blue part as $E_2$. The Poisson ratio $\nu_1$ (yellow region) and $\nu_2$ (blue region) are both chosen as 0.35 in Sections \ref{test 1}-\ref{numerical results for DDM-CEM}. To simulate nearly incompressible materials, we will assign 0.49/0.499/0.4999 to either $\nu_1$ or $\nu_2$ in Section \ref{numerical results for nearly incompressible materials}. For simplicity, the Poisson's ratio and Young's modulus in $\Omega_2$ are fixed at $E_2$ and $\nu_2$, respectively, throughout all numerical experiments.
The Lam$\rm \acute{e}$ coefficients $\lambda,\mu$ are denoted as \begin{equation*}
	\label{Lame constants}
	\lambda\coloneqq\frac{E\nu}{(1+\nu)(1-2\nu)},\quad  \mu\coloneqq\frac{E}{2(1+\nu)}.
\end{equation*} 
Let \((\underline{\bm{\sigma}}_h, \mathbf{u}_h) \in \underline{\bm{\Sigma}}_h \times \mathbf{U}_h\) be the solution of (\ref{penalty_mixed_discrete_formula}) on the entire domain, and set \(\underline{\bm{\sigma}}_{h,i} = \underline{\bm{\sigma}}_h|_{\Omega_i}\) and \(\mathbf{u}_{h,i} = \mathbf{u}_h|_{\Omega_i}\) for \(i = 1,2\). We write \(\underline{\bm{\sigma}}_h = (\underline{\bm{\sigma}}_{h,1}, \underline{\bm{\sigma}}_{h,2})\) and \(\mathbf{u}_h = (\mathbf{u}_{h,1}, \mathbf{u}_{h,2})\), and treat \((\underline{\bm{\sigma}}_h, \mathbf{u}_h)\) as the reference solution in our numerical simulations.
Define \(\underline{\bm{\sigma}}^n = (\underline{\bm{\sigma}}^n_1, \underline{\bm{\sigma}}^n_2)\) and \(\mathbf{u}^n = (\mathbf{u}^n_1, \mathbf{u}^n_2)\) as the iterative solutions at the final iteration \(n\) obtained by the proposed iterative contact-resolving hybrid algorithms (Algorithms \ref{an iterative dd algorithm for continuous setting in mixed form}/\ref{alg:BB}).
In order to evaluate the accuracy of the iteration solution (at the last iteration time $n$), we use the relative errors:
$
e_{\sigma}^r=\frac{\norm{\underline{\bm{\sigma}}_h-\underline{\bm{\sigma}}^n}_\mathcal{A}}{\norm{\underline{\bm{\sigma}}_h}_{\mathcal{A}}},$ $ e_u^r = \frac{\norm{\mathbf{u}_h-\mathbf{u}^n}_{L^2}}{\norm{\mathbf{u}_h}_{L^2}}
$ (where the norm notation defined in Section~\ref{iterative contact-resolving hybrid method for the mixed discretization 2} are used).
The stopping criterion is: 
\[\max(\frac{\norm{\underline{\bm{\sigma}}^n-\underline{\bm{\sigma}}^{n-1}}_\mathcal{A}}{\norm{\underline{\bm{\sigma}}^n}_{\mathcal{A}}}, \frac{\norm{\mathbf{u}^n-\mathbf{u}^{n-1}}_{L^2}}{\norm{\mathbf{u}^n}_{L^2}})\leq 10^{-6}.\] Here \(\max(\frac{\norm{\underline{\bm{\sigma}}^n-\underline{\bm{\sigma}}^{n-1}}_\mathcal{A}}{\norm{\underline{\bm{\sigma}}^n}_{\mathcal{A}}}, \frac{\norm{\mathbf{u}^n-\mathbf{u}^{n-1}}_{L^2}}{\norm{\mathbf{u}^n}_{L^2}})\) is called relative tolerance. 
The initial values $\mathbf{g}^0_{12}$ and $\mathbf{g}^0_{21}$ can be chosen arbitrarily; they do not affect the final convergence but only the number of iterations. Without loss of generality, we fix them as $(0,0)^{\mathrm{T}}$ for all iterations. This facilitates a consistent comparison of convergence behavior under different transmission coefficients. The transmission coefficient $\beta$ is set to 1 or $\sqrt{h}$, inspired by some existing literature \cite{lions1990schwarz,tang1992generalized,deng1997timely, deng2003optimal, deng2003nonoverlapping, xu2010spectral, qin2006parallel,qin2008optimized}.

In Sections~\ref{test 1} and~\ref{test 2}, we present the iteration behaviors, relative tolerance, and relative errors for both the stress and displacement fields using piecewise constant functions $\mathbf{f}$ in Test Models 1 and 2, respectively. The components of the stress and displacement fields, along with their values on the contact boundary $\Gamma_C$, are displayed. In Section~\ref{numerical results for DDM-CEM}, we report the relative errors and iteration numbers for stress and displacement as the layer extension varies within the iterative framework associated with CEM-GMsFEM. Section~\ref{numerical results for nearly incompressible materials} investigates the robustness of the iteration behavior for nearly incompressible materials. Finally, we report the computational costs in Section \ref{times}. All numerical experiments in Sections~\ref{test 1}--\ref{times} approximate the penalty problem (\ref{model_problem_update_mixed_form}), with the boundary conditions (\ref{model_problem_update_mixed_form_b})--(\ref{model_problem_update_mixed_form_c}). The Dirichlet boundary is defined as
\(
\Gamma_D \coloneqq \{(x,y): x=0,\; 0\le y\le 1\} \cup \{(x,y): y=0,\; 0\le x\le 1\} \cup \{(x,y): y=1,\; 0\le x\le 1\},
\)
and the contact boundary is
\(
\Gamma_C \coloneqq \{(x,y): x=1,\; 0\le y\le 1\}.
\)
Solutions from the test models are further verified to satisfy (\ref{model_problem_b})-(\ref{model_problem_c}), as illustrated in Figures \ref{discrete_displacement_model_1}-\ref{contact values_model_1} and \ref{discrete_displacement_2}-\ref{contact values_2}. We performed post-processing on the computed relative errors to remove Dirichlet–Robin boundary intersection effects.

\begin{figure}[tbph] 
		\centering 
		\includegraphics[height=6cm,width=14cm]{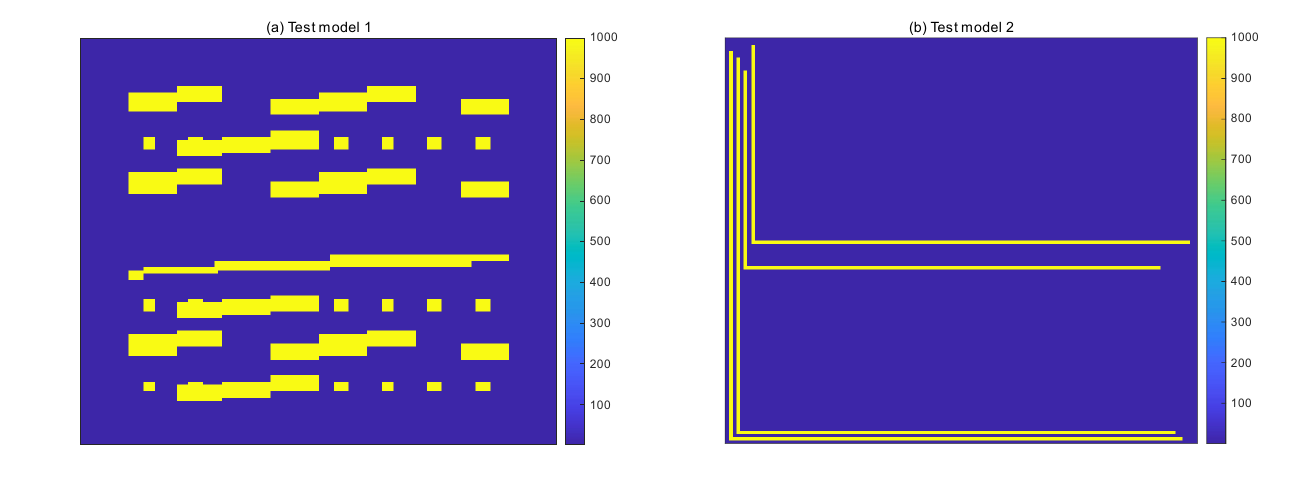} 
		\caption{Young’s Modulus of the test models in $\Omega_1$}
		\label{fig:heterogeneity_pattern}
\end{figure}
\subsection{Test model 1}\label{test 1}
In the first numerical experiment, we test the iterative contact-resolving hybrid framework associated with mixed method (i.e. Algorithm \ref{an iterative dd algorithm for continuous setting in mixed form}). We consider Test Model~1 illustrated in Figure~\ref{fig:heterogeneity_pattern}. Poisson's ratios are set equal with $\nu_1 = \nu_2 = 0.35$, while Young's moduli follow a contrast ratio with $E_2 = 1$ fixed and $E_1=10^3$ in $\Omega_1$.
We take $\mathbf{f}=(f_1,f_2)$ with $f_2(x,y)=0$ and $f_1(x,y)$ expressed as

\[ f(x,y) = 
\begin{cases}
-\frac{1}{2}, & \text{if } \frac{7}{8} \leq x \leq 1 \text{ and } \frac{1}{8} \leq y \leq \frac{1}{2} \\
1, & \text{if } \frac{7}{8} \leq x \leq 1 \text{ and } \frac{5}{8} \leq y \leq \frac{7}{8} \\
0, & \text{othewise}.
\end{cases}
\]

We present the number of iterations, relative tolerance, and relative errors for the stress and displacement fields with different $\beta$. The results in Table \ref{Relative errors of Test model 1} demonstrate that the relative errors for both the stress and displacement fields reach approximately the \(10^{-4}\) level across different values of \(\beta\) under fixed mesh sizes $h=1/64$ or $h=1/128$. For instance, at $h=1/128$, the errors are 4.32e-04 (stress) and 1.94e-04 (displacement) when $\beta=\sqrt{h}$, and 4.33e-04 (stress) and 1.43e-04 (displacement) when $\beta=1$. Starting from the same initial guess, i.e., \(\mathbf{g}^0_{12} = \mathbf{g}^0_{21} = (0,0)^{\mathrm{T}}\), the number of iterations required for \(\beta = \sqrt{h}\) is notably lower than for \(\beta = 1\) under fixed $h$. This is visualized in Figure~\ref{iterations for model 1}:
when \(h = 1/64\), varying $\beta$ from 1 (blue curve) to \(\sqrt{h}\) (green curve) accelerates convergence, requiring only about 20--30 iterations to satisfy the stopping criterion. Similarly, fixing \(h = 1/128\): for \(\beta=\sqrt{h}\) (purple curve), only about 40--50 iterations are needed to achieve the same accuracy, while for \(\beta=1\) (red curve), convergence slows down, taking more than 90 iterations. As clearly shown in Figure~\ref{iterations for model 1}, adopting \(\beta = \sqrt{h}\) leads to faster convergence compared to \(\beta = 1\) under a fixed $h$.

A comparison of the stress field components is presented in Figure~\ref{discrete_stress_model_1}. The reference solution (panels (a)-(c)) and our iterative solution with $h=1/64$, $\beta=1$ (panels (d)-(f)) appear to be consistent, validating the method's accuracy. Visual agreement can also be observed for the displacement field in Figure~\ref{discrete_displacement_model_1}. Furthermore, the behavior of the normal stress and displacement components on the contact boundary $\Gamma_C$ is detailed in Figure \ref{contact values_model_1}. The results consistently satisfy the contact condition (\ref{model_problem_c}): $\bm{u}^n \cdot \mathbf{n}_c = 0$ where $\mathbf{n}_c \cdot (\underline{\bm{\sigma}}^n \cdot \mathbf{n}_c) < 0$, and $\mathbf{n}_c \cdot (\underline{\bm{\sigma}}^n \cdot \mathbf{n}_c) = 0$ where $\bm{u}^n \cdot \mathbf{n}_c < 0$. The active contact set, where $\bm{u}^n \cdot \mathbf{n}_c = 0$ and $\mathbf{n}_c \cdot (\underline{\bm{\sigma}}^n \cdot \mathbf{n}_c) < 0$, spans approximately $y\in[0,0.5]$ on $\Gamma_C$.

\begin{table}
	\footnotesize
	\centering
	\caption{Iterations and errors with different $\beta$ for Algorithm \ref{an iterative dd algorithm for continuous setting in mixed form} in Test model 1}
	\label{Relative errors of Test model 1}
	\begin{tabular}{ccccccc}
		\hline
		$h$ & $\beta$ & Iterations & Relative tolerance & $e_{\sigma}^r$ & $e_u^r$\\
		\hline
\multirow{2}*{1/64} &1 & 69	& 8.46e-06 &1.49e-04 &1.18e-04\\
 &$\sqrt{h}$ & 26 & 7.01e-06 &1.51e-04 &1.19e-04\\ \hline
\multirow{2}*{1/128} &1 & 94  & 9.61e-06	&4.33e-04	&1.43e-04\\
 &$\sqrt{h}$ & 43 & 8.37e-06 &4.32e-04 &1.94e-04
		\\
		\hline
	\end{tabular}
\end{table}


\begin{figure}[tbph] 
		\centering 
		\includegraphics[height=8cm,width=15cm]{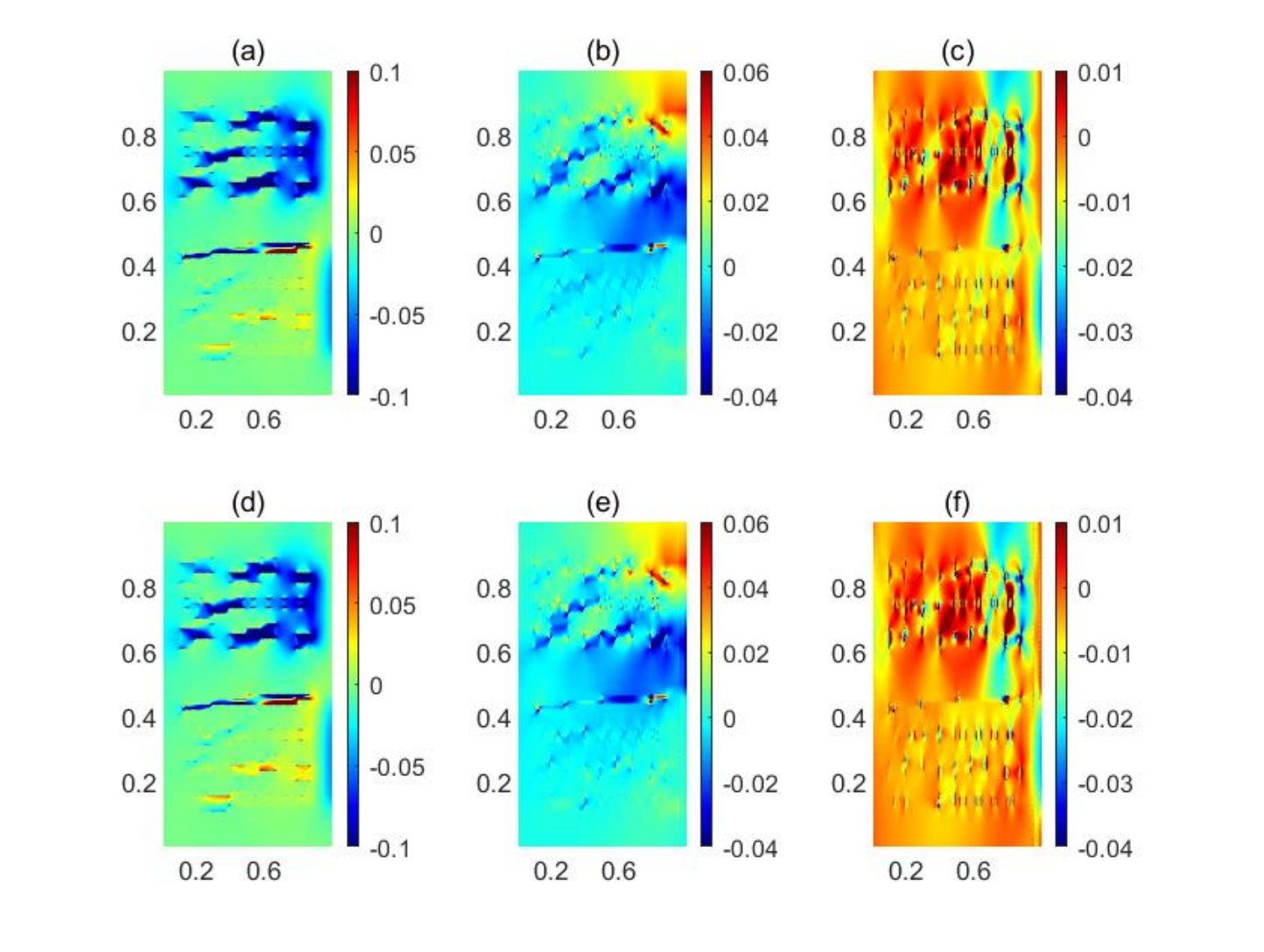} 
		\caption{(a)-(c): components of stress reference solution in Test model 1 (i.e. $(\underline{\bm{\sigma}}_h)_{11},(\underline{\bm{\sigma}}_h)_{12},(\underline{\bm{\sigma}}_h)_{22}$); (d)-(f): components of final stress iteration solution for Algorithm \ref{an iterative dd algorithm for continuous setting in mixed form} in Test model 1 (i.e. $(\underline{\bm{\sigma}}^n)_{11},(\underline{\bm{\sigma}}^n)_{12},(\underline{\bm{\sigma}}^n)_{22}$) with $h=1/64$, $\beta=1$.}
		\label{discrete_stress_model_1}
\end{figure}
\begin{figure}[tbph] 
		\centering 
		\includegraphics[height=8cm,width=10cm]{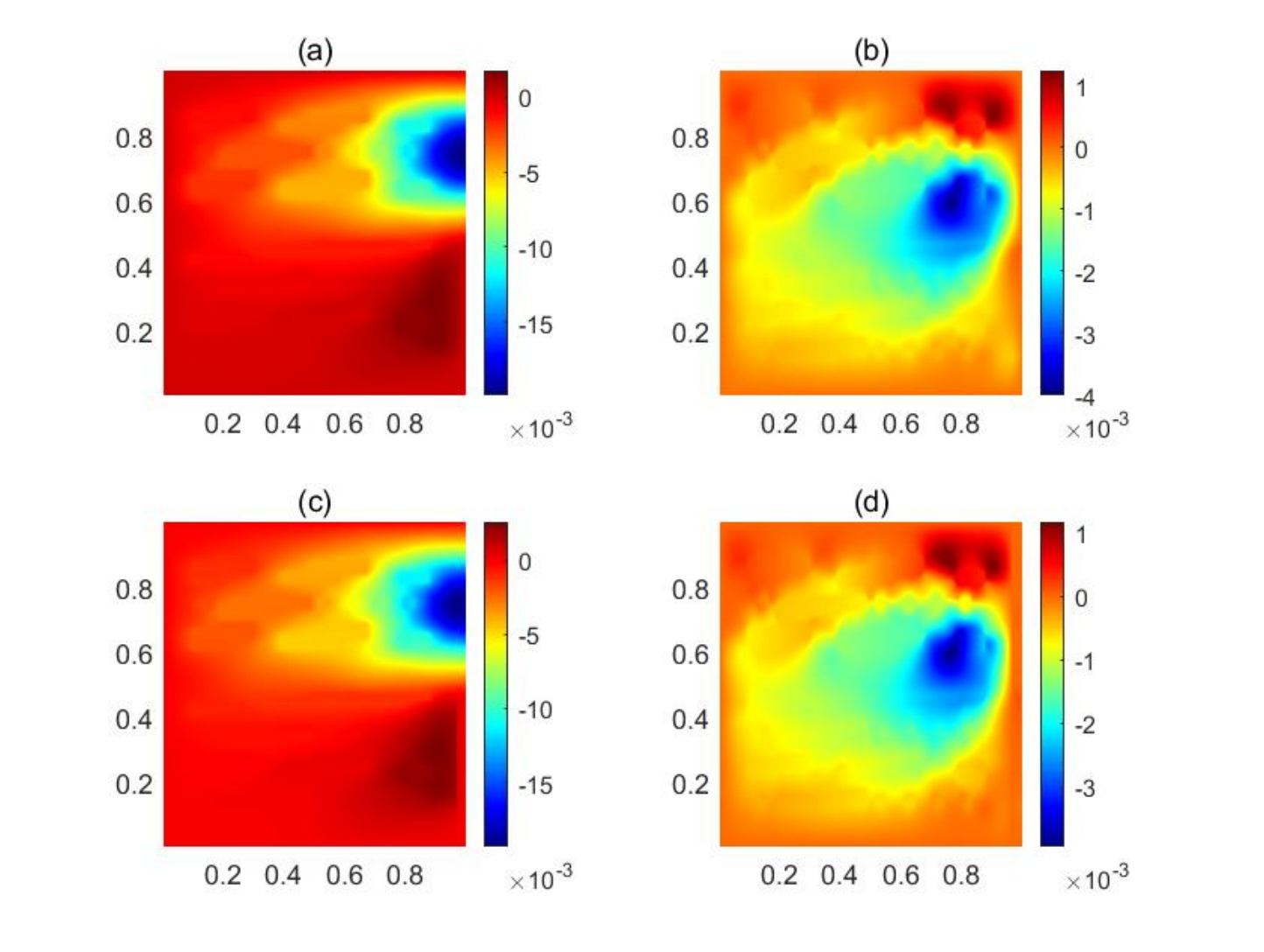} 
		\caption{(a)-(b): components of displacement reference solution in Test model 1 (i.e. $(\mathbf{u}_h)_1,(\mathbf{u}_h)_2$); (c)-(d): components of final displacement iteration solution for  Algorithm \ref{an iterative dd algorithm for continuous setting in mixed form} in Test model 1 (i.e. $(\mathbf{u}^n)_1,(\mathbf{u}^n)_2$) with $h=1/64$, $\beta=1$}
		\label{discrete_displacement_model_1}
\end{figure}

\begin{figure}[tbph] 
		\centering 
		\includegraphics[height=6cm,width=9cm]{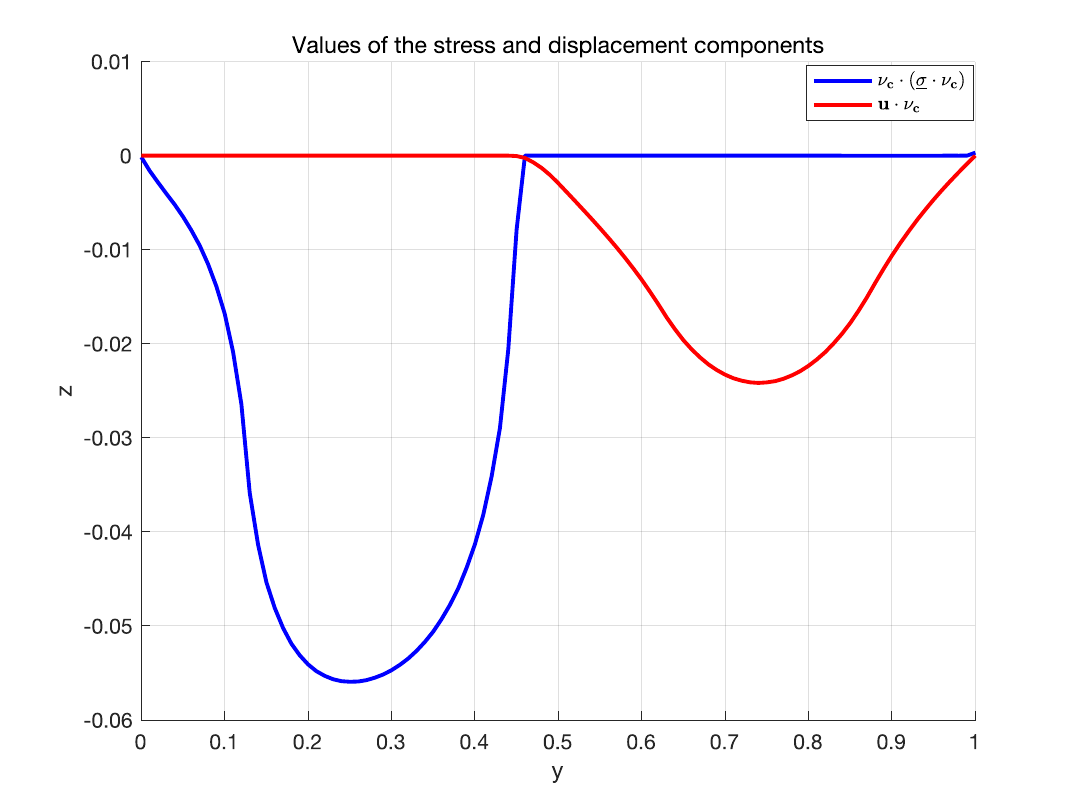} 
		\caption{Contact values of final stress and displacement iteration solutions for  Algorithm \ref{an iterative dd algorithm for continuous setting in mixed form} in Test model 1}
		\label{contact values_model_1}
\end{figure}

\begin{figure}[tbph] 
		\centering 
		\includegraphics[height=6.5cm,width=10cm]{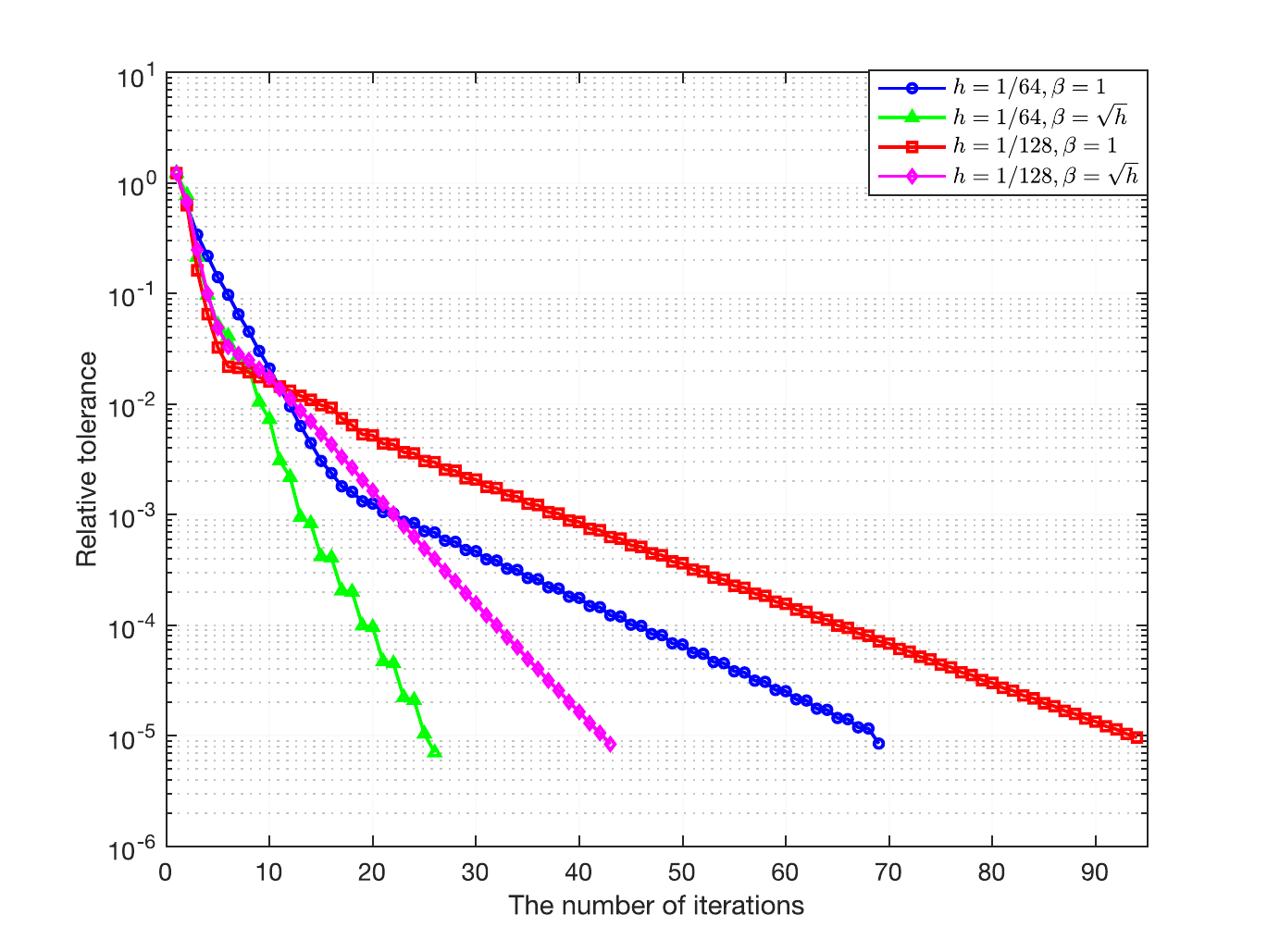} 
		\caption{Iterations with different values of $\beta$ for  Algorithm \ref{an iterative dd algorithm for continuous setting in mixed form} in Test model 1}
		\label{iterations for model 1}
\end{figure}

\subsection{Test model 2}\label{test 2}
The second numerical experiment employs Test Model 2 (Figure \ref{fig:heterogeneity_pattern}) to evaluate Algorithm \ref{an iterative dd algorithm for continuous setting in mixed form}. The material parameters are specified with $\nu_1 = \nu_2 = 0.35$ and $E_1=10^3, E_2 = 1$. We take $\mathbf{f}=(f_1,f_2)$ with $f_2(x,y)=0$ and $f_1(x,y)$ expressed as
\[ f(x,y) = 
\begin{cases} 
1/2, & \text{if } \frac{7}{8} \leq x \leq 1 \text{ and } \frac{1}{8} \leq y \leq \frac{1}{4} \\
-1/4, & \text{if } \frac{7}{8} \leq x \leq 1 \text{ and } \frac{3}{8} \leq y \leq \frac{5}{8} \\
1/2, & \text{if } \frac{7}{8} \leq x \leq 1 \text{ and } \frac{7}{8} \leq y \leq 1 \\
0, & \text{othewise}.
\end{cases}
\]

The number of iterations, relative tolerance, and relative errors in the stress and displacement fields with different $\beta$ (under fixed $h=1/64$ or $h=1/128$) are presented in Table \ref{Relative errors of Test model 2}. 
We observe that the relative errors for both the stress and displacement fields attain approximately the $10^{-4}$ level across different $\beta$ under fixed $h$. For instance, at $h=1/64$, the errors are 3.65e-04 (stress) and 2.98e-04 (displacement) when $\beta=\sqrt{h}$, and 3.64e-04 (stress) and 2.98e-04 (displacement) when $\beta=1$. Starting from the same initial guess, convergence under $\beta = \sqrt{h}$ requires fewer iterations than under $\beta = 1$ with fixed $h$. This confirms that the choice of $\beta$ substantially affects convergence speed. The number of iterations reported in Table~\ref{Relative errors of Test model 2} are visualized in Figure~\ref{iterations_2}: when \(h = 1/64\), varying $\beta$ from \(\sqrt{h}\) (green curve) to 1 (blue curve) slows down convergence, requiring about 50 iterations to satisfy the stopping criterion. Similarly, fixing \(h = 1/128\): for \(\beta=\sqrt{h}\) (purple curve), only about 40--50 iterations are needed to reach the stopping criterion, while for \(\beta=1\) (red curve), convergence slows down, taking more than 60 iterations. As clearly shown in Figure~\ref{iterations_2}, adopting \(\beta = \sqrt{h}\) leads to faster convergence compared to \(\beta = 1\) under a fixed $h$.

Visual comparison of solution components provides additional validation of the method's accuracy. Figure~\ref{discrete_stress_2} (a)-(c) display the reference stress solution components, while (d)-(f) present the iteration approximation. The agreement between these solutions is visually consistent. Similar agreement is observed for the displacement field in Figure~\ref{discrete_displacement_2}. 
Moreover, our observations in Figure \ref{contact values_2} confirm the contact boundary condition (\ref{model_problem_c}): when $\mathbf{n}_c \cdot (\underline{\bm{\sigma}}^n \cdot \mathbf{n}_c) < 0$, we have $\bm{u}^n \cdot \mathbf{n}_c = 0$; conversely, $\mathbf{n}_c \cdot (\underline{\bm{\sigma}}^n \cdot \mathbf{n}_c) = 0$ when $\bm{u}^n \cdot \mathbf{n}_c < 0$; The active contact set spans about $y\in[0.4,0.6]$ on $\Gamma_C$.

\begin{table}
	\footnotesize
	\centering
	\caption{Iterations and errors with different $\beta$ for Algorithm \ref{an iterative dd algorithm for continuous setting in mixed form} in Test model 2}
	\label{Relative errors of Test model 2}
	\begin{tabular}{cccccc}
		\hline
		$h$ & $\beta$ & Iterations & Relative tolerance & $e_{\sigma}^r$ & $e_u^r$\\
		\hline
\multirow{2}*{1/64} &1	& 48	&9.34e-06	& 3.64e-04 &2.98e-04\\
 &$\sqrt{h}$	& 27	&7.20e-06	&3.65e-04 &2.98e-04\\ \hline
\multirow{2}*{1/128} &1 & 66 &9.54e-06	&4.46e-04 & 3.26e-04
		\\
 &$\sqrt{h}$ & 42 &9.15e-06	&4.47e-04 &3.14e-04
		\\
		\hline
	\end{tabular}
\end{table}
\begin{figure}[tbph] 
		\centering 
		\includegraphics[height=8cm,width=15cm]{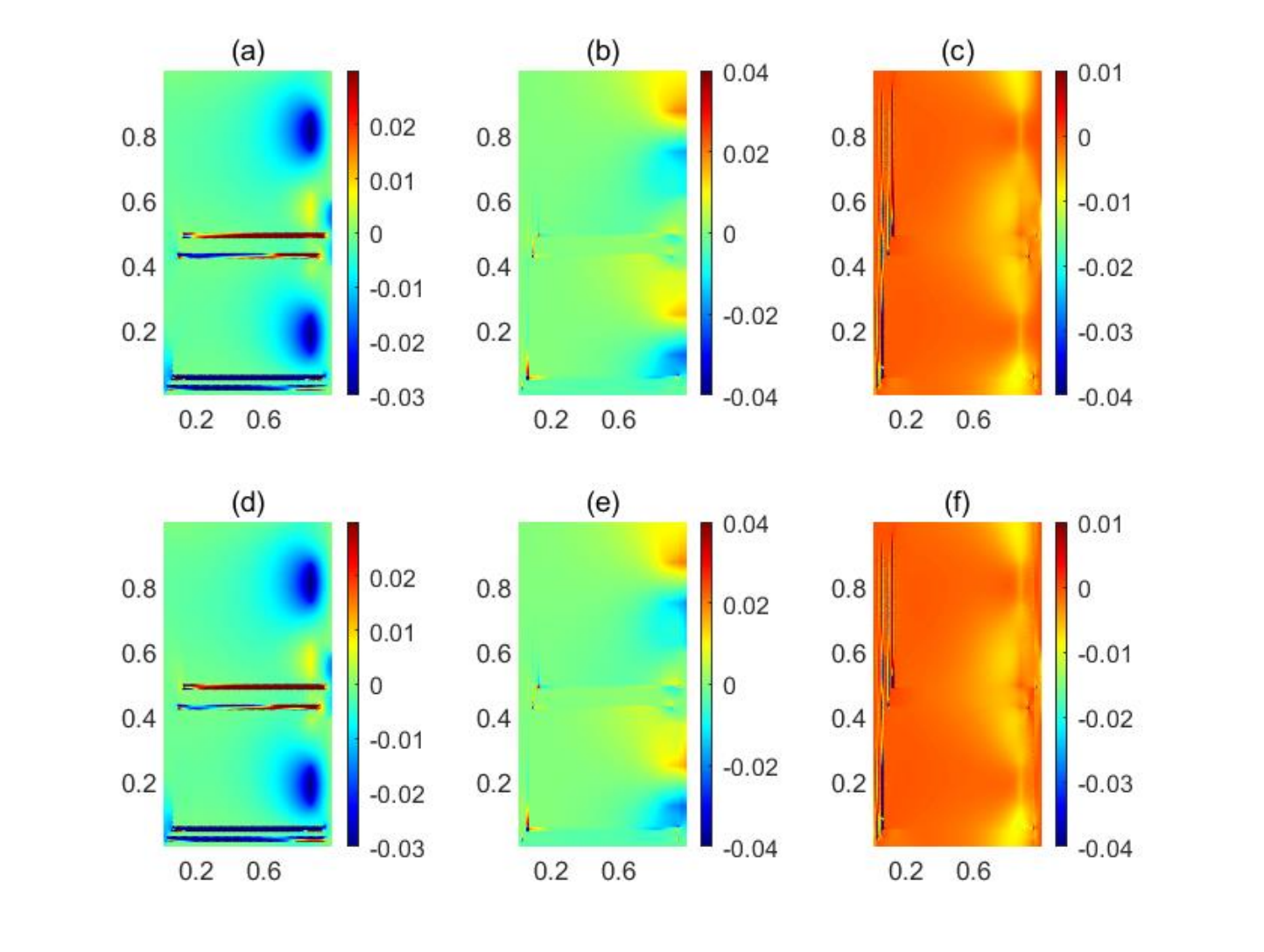} 
		\caption{(a)-(c): components of stress reference solution in Test model 2 (i.e. $(\underline{\bm{\sigma}}_h)_{11},(\underline{\bm{\sigma}}_h)_{12},(\underline{\bm{\sigma}}_h)_{22}$); (d)-(f): components of final stress iteration solution for Algorithm \ref{an iterative dd algorithm for continuous setting in mixed form} in Test model 2 (i.e. $(\underline{\bm{\sigma}}^n)_{11},(\underline{\bm{\sigma}}^n)_{12},(\underline{\bm{\sigma}}^n)_{22}$) with $h=1/64$, $\beta=1$.}
		\label{discrete_stress_2}
\end{figure}
\begin{figure}[tbph] 
		\centering 
		\includegraphics[height=8cm,width=10cm]{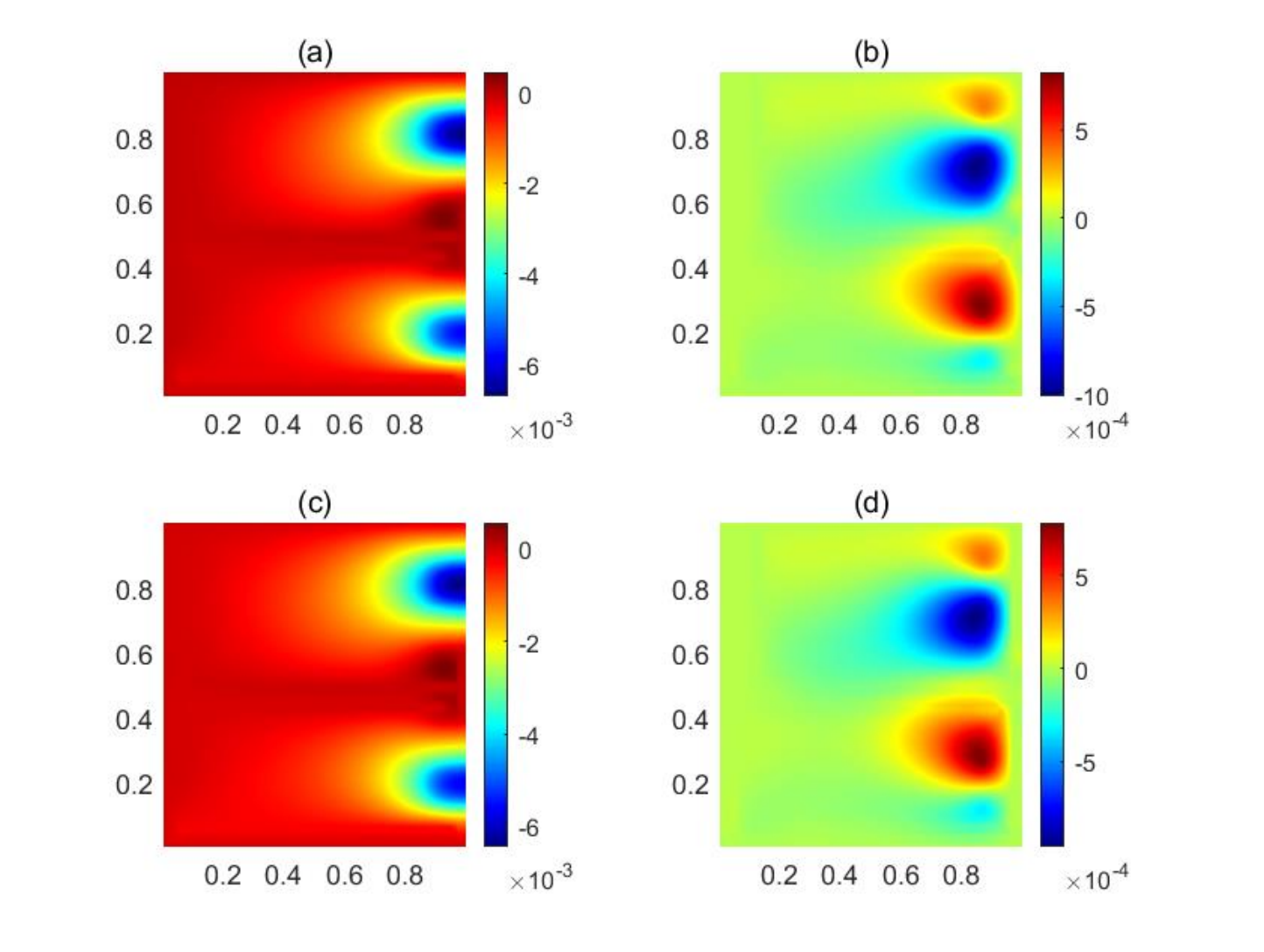} 
		\caption{(a)-(b): components of displacement reference solution in Test model 2 (i.e. $(\mathbf{u}_h)_1,(\mathbf{u}_h)_2$); (c)-(d): components of final displacement iteration solution for Algorithm \ref{an iterative dd algorithm for continuous setting in mixed form} in Test model 2 (i.e. $(\mathbf{u}^n)_1,(\mathbf{u}^n)_2$) with $h=1/64$, $\beta=1$}
		\label{discrete_displacement_2}
\end{figure}

\begin{figure}[tbph] 
		\centering 
		\includegraphics[height=6cm,width=9cm]{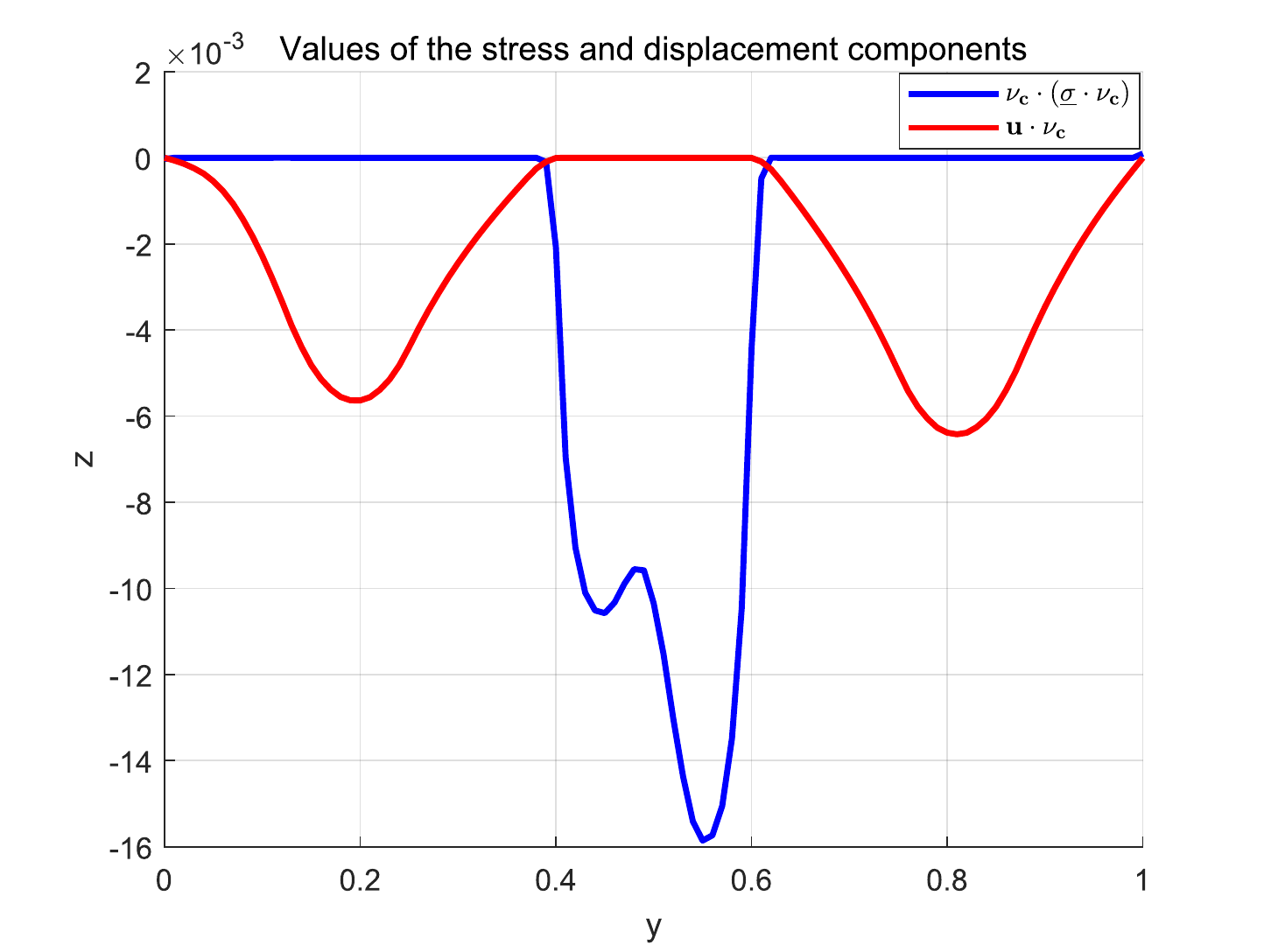} 
		\caption{Contact values of final stress and displacement iteration solutions for Algorithm \ref{an iterative dd algorithm for continuous setting in mixed form} in Test model 2}
		\label{contact values_2}
\end{figure}

\begin{figure}[tbph] 
		\centering 
		\includegraphics[height=6.5cm,width=10cm]{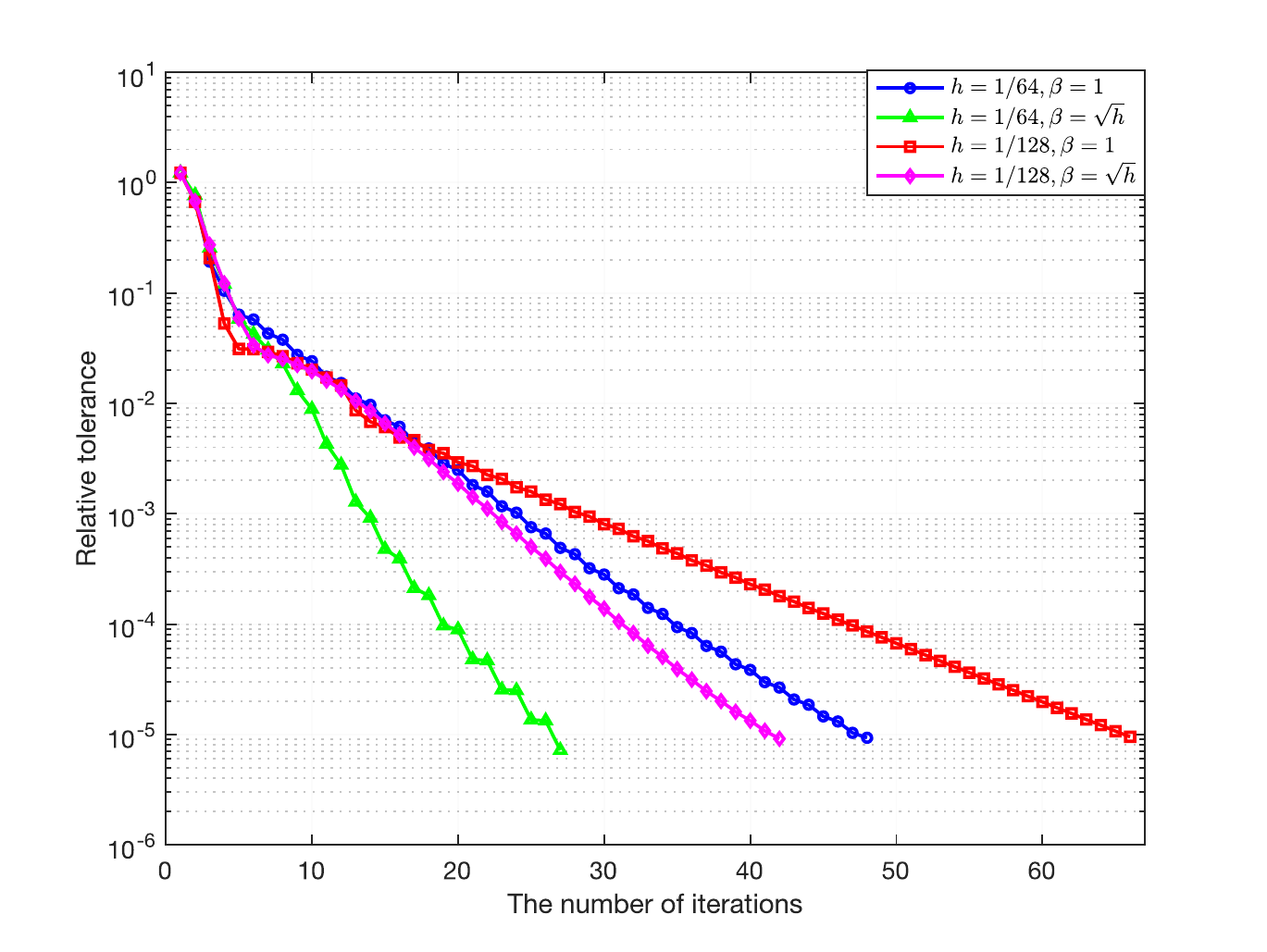} 
		\caption{Iterations with different values of $\beta$ for Algorithm \ref{an iterative dd algorithm for continuous setting in mixed form} in Test model 2}
		\label{iterations_2}
\end{figure}

\subsection{Test for the iterative framework associated with CEM-GMsFEM} \label{numerical results for DDM-CEM}
In this section, we test the iterative contact-resolving hybrid framework associated with mixed CEM-GMsFEM (Algorithm \ref{alg:BB}); the simulation uses Test Model 2 with all material parameters ($\nu_1$, $\nu_2$, $E_1$, $E_2$) and the source term $f$ consistent with Section \ref{test 2}. We fix $h=1/64$ and let $\beta=\sqrt{h}$, $H=1/16$. We assign three multiscale basis functions per coarse element.
Recall the definition in Section \ref{standard cem}: $K_{i,m}$ is defined as the oversampling region by extending $K_i$ by $m$ coarse mesh layers. Then we denote ``the number of layer extension" as ``$osly$", which equals $m$ for the region $K_{i,m}$.

Table~\ref{Relative errors for the stress of Test model 2 within CEM-GMsFEM} presents the number of iterations, relative tolerance, and relative errors for the stress and displacement fields with different $osly$. The key findings are: (i) The relative errors decrease as $osly$ increases. (ii) For $osly = 3$, the relative errors for stress and displacement reach the \(10^{-2}\) level (i.e., 5.14e-02, 3.09e-02 respectively). Increasing $osly$ to $5$ further reduces the errors to the $10^{-4}$ level for both stress and displacement (i.e. 8.94e-04, 6.86e-04 respectively). (iii) As $osly$ increases from 1 to 5, the number of iterations decreases from 96 to 43, demonstrating that larger oversampling regions substantially improve the convergence speed of the iterative framework. These results confirm the accuracy and efficiency of the proposed method associated with CEM-GMsFEM.

\begin{table}[htbp]
	\footnotesize
	\centering
	\caption{Iterations and errors with different $osly$ for Algorithm \ref{alg:BB} in Test model 2}
	\label{Relative errors for the stress of Test model 2 within CEM-GMsFEM}
	\begin{tabular}{cccccc}
		\hline
		$osly$ & Iterations & Relative tolerance & $e_{\sigma}^r$ & $e_u^r$\\
		\hline
1 & 96 & 8.64e-06 & 2.16e-01 & 1.81e-01\\
2 & 71 & 9.75e-06 & 1.23e-01 & 6.37e-02\\
3 & 58 & 7.39e-06 & 5.14e-02 & 3.09e-02\\
4 & 46 & 6.93e-06 & 7.26e-03 & 6.29e-03\\
5 & 43 & 8.24e-06 & 8.94e-04 & 6.86e-04\\
		\hline
	\end{tabular}
\end{table}

\subsection{Test for nearly incompressible material}\label{numerical results for nearly incompressible materials}
In this section, we examine the performance of the proposed methods in mixed formulations (Algorithms \ref{an iterative dd algorithm for continuous setting in mixed form} and \ref{alg:BB}) applied to nearly incompressible materials via Test Model~1. We fix $h=1/64$ and let $\beta = \sqrt{h}$, $H = 1/16$. The following three parameter cases are studied:
(i) $E_1/E_2=10000$, $\nu_1=0.49/0.499/0.4999$, $\nu_2=0.35$; (ii) $E_1/E_2=0.0001$, $\nu_1=0.35$, $\nu_2=0.49/0.499/0.4999$;
(iii) $E_1/E_2=1$, $\nu_1=\nu_2=0.49/0.499/0.4999$.

Table~\ref{Relative errors for nearly incompressible material} reveals that the relative errors for both stress and displacement fields uniformly converge to approximately the $10^{-4}$ level (specifically, between 0.0001 and 0.0009) after a number of iterations, thereby confirming the locking robustness of the mixed FEM approach. The robustness of the CEM–GMsFEM‐combined method is further confirmed by Tables~\ref{Relative errors for nearly incompressible material $E_1/E_2=0.0001$}–\ref{Relative errors for nearly incompressible material $E_1/E_2=1$}, where both stress and displacement relative errors reach the $10^{-4}$ level if $osly$ increases to 5.

\begin{table}[htbp]
	\footnotesize
	\centering
	\caption{Iterations and errors with different material parameters for Algorithm \ref{an iterative dd algorithm for continuous setting in mixed form} in Test model 1}
	\label{Relative errors for nearly incompressible material}
	\begin{tabular}{cccccc}
		\hline
		$E_1/E_2$ &$\nu_1,\nu_2$ & Iterations & Relative tolerance & $e_{\sigma}^r$ & $e_u^r$\\
		\hline
       10000 & 0.49,0.35	& 32	& 9.29e-06	& 3.15e-04	& 1.72e-04 \\
       10000 & 0.499,0.35	& 56	& 8.81e-06	& 3.78e-04	& 2.71e-04 \\
       10000 & 0.4999,0.35	& 64	& 6.59e-06	& 5.85e-04	& 3.94e-04 \\
       0.0001	& 0.35,0.49 & 29	& 6.41e-06	& 4.75e-04	& 1.13e-04 \\
0.0001	& 0.35,0.499 & 38	& 6.78e-06	& 6.09e-04	& 1.31e-04 \\
0.0001	& 0.35,0.4999 & 58	& 7.48e-06	& 6.38e-04	& 2.69e-04 \\
1	&0.49,0.49 & 24	& 6.25e-06	& 1.89e-04	& 1.49e-04 \\
1	&0.499,0.499 & 38	& 5.90e-06	& 2.76e-04	& 2.28e-04 \\
1	&0.4999,0.4999 & 57	& 8.27e-06	& 5.52e-04	& 3.74e-04 \\
	\hline
	\end{tabular}
\end{table}

\begin{table}[htbp]
	\footnotesize
	\centering
	\caption{Iterations and errors under different $osly$ for Algorithm \ref{alg:BB} with $E_1/E_2=0.0001,\nu_1=0.35,\nu_2=0.499$  in Test model 1}
	\label{Relative errors for nearly incompressible material $E_1/E_2=0.0001$}
	\begin{tabular}{cccccc}
		\hline
		$osly$ & Iterations & Relative tolerance & $e_{\sigma}^r$ & $e_u^r$\\
		\hline
        1	& 68	& 8.61e-04	& 2.39e-01	& 1.26e-01 \\
2	& 53	& 7.29e-04	& 1.38e-01	& 8.12e-02 \\
3	& 44	& 5.68e-04	& 3.79e-02	& 2.31e-02 \\
4	& 37	& 6.90e-04	& 4.35e-03	& 3.15e-03 \\
5	& 32	& 6.33e-04	& 8.21e-04	& 5.96e-04 \\
	\hline
	\end{tabular}
\end{table}
\begin{table}[htbp]
	\footnotesize
	\centering
	\caption{Iterations and errors under different $osly$ for Algorithm \ref{alg:BB} with $E_1/E_2=10000,\nu_2=0.35,\nu_1=0.499$ in Test model 1}
	\label{Relative errors for nearly incompressible material $E_1/E_2=10000$}
	\begin{tabular}{cccccc}
		\hline
		$osly$ & Iterations & Relative tolerance & $e_{\sigma}^r$ & $e_u^r$\\
		\hline
        1	& 79	& 6.51e-06	& 5.21e-01	& 3.35e-01 \\
2	& 62	& 8.32e-06	& 1.96e-01	& 2.81e-02 \\
3	& 54	& 6.58e-06	& 3.81e-02	& 1.58e-02 \\
4	& 51	& 9.92e-06	& 4.91e-03	& 3.79e-03 \\
5	& 46	& 8.63e-06	& 8.84e-04	& 6.02e-04 \\
	\hline
	\end{tabular}
\end{table}
\begin{table}[htbp]
	\footnotesize
	\centering
	\caption{Iterations and errors under different $osly$ for Algorithm \ref{alg:BB} with $E_1/E_2=1,\nu_1=\nu_2=0.499$ in Test model 1}
	\label{Relative errors for nearly incompressible material $E_1/E_2=1$}
	\begin{tabular}{cccccc}
		\hline
		$osly$ & Iterations & Relative tolerance & $e_{\sigma}^r$ & $e_u^r$\\
		\hline
       1	& 63	& 7.43e-06	& 1.54e-01	& 9.36e-02 \\
2	& 50	& 9.51e-06	& 8.79e-02	& 5.85e-02 \\
3	& 44	& 6.74e-06	& 4.12e-02	& 1.06e-02 \\
4	& 36	& 9.46e-06	& 5.31e-03	& 3.22e-03 \\
5	& 24	& 8.27e-06	& 7.28e-04	& 6.59e-04 \\
	\hline
	\end{tabular}
\end{table}

\subsection{Comparison of computational costs}\label{times}
This section presents the computational performance of three approaches: (i) the reference solution obtained by solving (\ref{penalty_mixed_discrete_formula}) with a semismooth Newton method \cite{hintermuller2002primal}, (ii) the iterative solution using Algorithm~\ref{an iterative dd algorithm for continuous setting in mixed form}, and (iii) the iterative solutions using Algorithm~\ref{alg:BB} with coarse mesh sizes $H = 1/32,1/16,1/8$. We fix $h = 1/64$ and let $\beta = \sqrt{h}$ for all tests.

Recall that all simulations were performed in MATLAB 2021a on a Lenovo ThinkCentre M80q Gen 4 desktop equipped with an Intel Core i9-13900T processor and 32 GB RAM. We employed several computational optimizations, including sparse matrix storage, vectorized operations, and parallel loops (\texttt{parfor}). The reported times cover the entire iterative procedure (to reach the stopping criterion $10^{-6}$), not including some linear solves via the backslash operator (prepared before the loop). 
For Algorithm~\ref{alg:BB}, we only record the online stage (multiscale basis construction is considered offline) and use 3 basis functions per coarse element.
As shown in Table~\ref{times_compare}, Algorithm~\ref{an iterative dd algorithm for continuous setting in mixed form} reduces the computation time from $15.2881\mathrm{s}$ to $6.4762\mathrm{s}$. Further acceleration is achieved by the multiscale technique in Algorithm~\ref{alg:BB}: the solution time drops to $4.9401\mathrm{s}$ for $H=1/32$ and to only $1.6873\mathrm{s}$ for $H=1/8$, while preserving the desired accuracy.
\begin{table}[ht]
    \footnotesize
    \centering
    \caption{Computational costs for different iterative methodologies (stopping criterion: $10^{-6}$).}
    \label{times_compare}
    \begin{tabular}{|c|c|c|c|c|c|}
        \hline
        & \makecell{Reference solution by solving (\ref{penalty_mixed_discrete_formula})\\(using semismooth Newton \cite{hintermuller2002primal})} 
        & \makecell{Iterative solution\\ by Algorithm~\ref{an iterative dd algorithm for continuous setting in mixed form}} 
        & \multicolumn{3}{c|}{\makecell{Iterative solution by Algorithm~\ref{alg:BB}}} \\
        \cline{4-6}
        & & & $H=1/32$ & $H=1/16$ & $H=1/8$ \\
        \hline
        DOFs & 516864 & 516864 & 14892 & 12636 & 12084 \\ \hline
        Times (s) & 15.2881 & 6.4762 & 4.9401 & 2.1341 & 1.6873 \\
        \hline
    \end{tabular}
\end{table}

\section{Conclusions}
\label{conclusions}
In this work, we present an efficient iterative contact-resolving hybrid method tailored for multiscale contact mechanics involving high-contrast material coefficients. The proposed approach localizes the nonlinear contact constraints within a smaller subdomain, while the larger subdomain is described by a linear system. Within this framework, we introduce four distinct discretization strategies. These combine standard (mixed) finite element methods applied over the entire domain, or standard (mixed) multiscale methods in the larger subdomain coupled with standard (mixed) finite element methods in the smaller one. The use of the standard finite element method offers simplicity and ease of implementation. In contrast, the multiscale reduction technique employed in the larger subdomain effectively circumvents the excessive degrees of freedom typically associated with conventional approaches. Furthermore, the incorporation of mixed formulations across the framework enhances robustness against locking effects, even in nearly incompressible material regimes.
Convergence analysis and corresponding algorithms are provided for all proposed schemes. Finally, a comprehensive set of numerical experiments demonstrates the accuracy and robustness of the presented methodology.

\section*{CRediT authorship contribution statement}

\textbf{Eric T. Chung:} Writing – review \& editing, Supervision, Resources, Methodology, Funding acquisition, Conceptualization. \textbf{Hyea Hyun Kim:} Writing – review \& editing, Methodology, Software, Conceptualization. \textbf{Xiang Zhong:} Writing – review \& editing, Writing – original draft, Visualization, Validation, Software, Resources, Methodology, Formal analysis, Data curation, Conceptualization.

\section*{Declaration of competing interest}

The authors declare that they have no known competing financial interests or personal relationships that could have appeared
to influence the work reported in this paper.

\section*{Declaration of Generative AI and AI-assisted technologies in the writing process}

During the preparation of this work the authors used ChatGPT in order to improve readability and language. After
using this tool, the authors reviewed and edited the content as needed and take full responsibility for the content of the
publication.

\section*{Acknowledgments}
Eric T. Chung's work is partially supported by the Hong Kong RGC General Research Fund (Project numbers:
14305222). Hyea Hyun Kim's work is supported by the National Research Foundation of Korea (NRF) grant RS-2025-00516964.

\bibliographystyle{siamplain}

\end{document}